\documentclass[a4paper,12pt]{amsart}

\usepackage{bbm}



\usepackage[utf8]{inputenc}
\usepackage[english]{babel}             
\usepackage{geometry}
\usepackage{lipsum}
\usepackage{parskip}
\usepackage{enumitem}
\usepackage{amsfonts,amsmath,amssymb,amsthm}    
\usepackage{mathtools}
\usepackage{bbm}
\usepackage{setspace}
\usepackage{esint}
\usepackage{lmodern}                    
\usepackage{graphicx}                   
\usepackage{xcolor}                     
\usepackage{hyperref}                   
\usepackage{bookmark}
\usepackage[backend=biber,style=numeric,bibencoding=utf8,isbn=false,url=false,sorting=nty,doi=false,giveninits=true,maxbibnames=10,maxalphanames=10]{biblatex}  
\AtEveryBibitem{\clearfield{month}}
\usepackage{csquotes}                   
\usepackage{enumitem}                   
\usepackage{parskip}
\usepackage[margin=1cm,labelfont=bf]{caption}   
\usepackage{float}


\geometry{                              
	a4paper,
	textwidth=180mm,
	left=20mm,
	top=25mm,
	bottom=30mm,
    footskip=10mm}
\savegeometry{paper}                    


\graphicspath{{./files/figures/}}       

\addbibresource{references.bib}         
\AtBeginBibliography{\small}            

\allowdisplaybreaks                     

\setlist[enumerate,1]{label=(\roman*)}  


\setlength{\parindent}{15pt}             
\setlength{\parskip}{3pt}

\numberwithin{equation}{section}


\DeclareFieldFormat{extraalpha}{#1}     
\DeclareLabelalphaTemplate{
  \labelelement{
    \field[final]{shorthand}
    \field{label}
    \field[strwidth=2 , strside=left,ifnames=1]{labelname}
    \field[strside=left,ifnames=2-,varwidthlist=true]{labelname}
  }
}
\DeclareLabelalphaNameTemplate{
  \namepart[use=true, pre=true, strwidth=1, compound]{prefix}
  \namepart{family} 
}

\renewbibmacro{in:}{}                   
\DeclareFieldFormat[article,periodical]{volume}{\mkbibbold{#1}}
\DeclareFieldFormat[article,periodical]{journaltitle}{#1\isdot}
\DeclareFieldFormat[article,periodical]{title}{#1}
\setstretch{1.2}


\newcommand{\mcalA}{\mathcal{A}}

\newcommand{\mcalF}{\mathcal{F}}

\newcommand{\mcalH}{\mathcal{H}}

\newcommand{\mcalT}{\mathcal{T}}

\newcommand{\naturals}{\mathbb{N}}
\newcommand{\N}{\naturals}

\newcommand{\reals}{\mathbb{R}}
\newcommand{\R}{\reals}
\newcommand{\complex}{\mathbb{C}}
\newcommand{\C}{\complex}


\newcommand{\field}{\mathbb{F}}


\newcommand{\eps}{\varepsilon}

\newcommand{\gs}{>}

\newcommand{\conj}[1]{\overline{#1}}
\newcommand{\norm}[1]{\left\|  #1  \right\|}
\newcommand{\abs}[1]{\left| #1 \right|}
\newcommand{\vol}[1]{\abs{#1}}
\newcommand{\of}[1]{\left( #1 \right)}

\newcommand{\inv}[1]{\frac{1}{#1}}

\newcommand{\inset}[1]{\left \{ #1 \right \}}
\newcommand{\angles}[1]{\left\langle #1 \right\rangle}
\newcommand{\inner}[1]{\angles{#1}}

\newcommand{\pe}{\partial_\e}

\newcommand{\charf}[1]{\mathbbm{1}_{#1}}


\DeclareMathOperator{\tRe}{Re}

\DeclareMathOperator{\diff}{d\!}



\newcommand{\theoremname}{Theorem}
\newcommand{\lemmaname}{Lemma}
\newcommand{\propositionname}{Proposition}
\newcommand{\definitionname}{Definition}
\newcommand{\corollaryname}{Corollary}
\newcommand{\claimname}{Claim}

\newcommand{\examplename}{Example}
\newcommand{\remarkname}{Remark}
\newcommand{\commentsname}{Comments}
\newcommand{\notename}{Note}
\newcommand{\notationname}{Notation}

\newcounter{allcounter}[section]



\newtheorem{numthm}[allcounter]{\theoremname}
\newtheorem{theorem}{\theoremname}

\newtheorem*{lemma}{\lemmaname}                     
\newtheorem{numlemma}[allcounter]{\lemmaname}

\newtheorem{numprop}[allcounter]{\propositionname}

\newtheorem{numcor}[allcounter]{\corollaryname}


\newtheorem{numdefn}[allcounter]{\definitionname}



           
\newtheorem*{theorem*}{\theoremname}

\theoremstyle{definition}
\newtheorem{remark}{\remarkname}                   


\makeatletter
\AtEveryBibitem{\def\@currentlabel{\thefield{labelnumber}}\label{labelnumber-\thefield{entrykey}}}
\makeatother

\geometry{
	a4paper,
	textwidth=188mm,
	left=10mm,
	top=22mm,
	bottom=22mm,
    footskip=10mm}
\savegeometry{paper}
\allowdisplaybreaks
\setlist[enumerate,1]{label=(\roman*)}
\setlength{\parindent}{15pt}
\setlength{\parskip}{3pt}
\numberwithin{equation}{section}

\newcommand{\disk}{\mathbb{D}}
\newcommand\ee{v^*}
\newcommand{\Fock}{\mcalF^2}

\newcommand{\Fnorm}[1]{\norm{#1}_{\Fock}}
\newcommand{\Om}{\Omega}
\newcommand{\om}{\omega}
\newcommand{\dso}{\delta_{s_0}}

\newcommand{\Berg}{\mathbf{B}_{\alpha}^2}
\newcommand{\K}{\mathcal K}
\newcommand{\e}{\varepsilon}
\newcommand{\p}{\partial}
\newcommand{\mc}{\mathcal}
\newcommand{\ff}{\mathbf{f}}
\newcommand{\mb}{\mathbb}
\newcommand{\B}{\mathbb{B}}
\newcommand{\vhi}{\varphi}
\DeclareMathOperator*{\ddiv}{div}
\renewcommand{\Re}{\operatorname{Re}}
\renewcommand{\Im}{\operatorname{Im}}

\title[Uniform Stability of Faber-Krahn for Wavelet transforms]{Uniform stability of concentration inequalities and applications}

\author{Jaime Gómez}
\address{Mathematics section, EPFL, Lausanne, Switzerland.}
\email{jaime.gomezramirez@epfl.ch}
\author{David Kalaj}
\address{University of Montenegro, Faculty of natural sciences and mathematics, Podgorica, Cetinjski put b.b. 81000 Podgorica, Montenegro }
\email{davidk@ucg.ac.me}
\author{Petar Melentijevi\'c}
\address{University of Belgrade, Faculty of Mathematics, 11000, Belgrade, Serbia}
\email{petarmel@matf.bg.ac.rs}
\author{Jo\~ao P. G. Ramos}
\address{Department of Mathematics, Faculty of Sciences of the University of Lisbon, Lisbon, Portugal}
\email{joaopgramos95@gmail.com}

\keywords{Wavelet transforms, Bergman spaces, Hardy spaces, Fock spaces, Concentration inequalities, Stability}
\subjclass{42C40,30H20,30H10,47A75,49K21,49R05}

\begin{document}
\begin{abstract}
We prove a sharp quantitative version of recent Faber-Krahn inequalities for the continuous Wavelet transforms associated to a certain family of Cauchy wavelet windows \cite{RamosTilli}. Our results are \emph{uniform} on the parameters of the family of Cauchy wavelets, and asymptotically \emph{sharp in both directions}.

As a corollary of our results, we are able to recover not only the original result for the short-time Fourier transform as a limiting procedure, but also a \emph{new} concentration result for functions in Hardy spaces. This is a \emph{completely novel} result about optimal concentration of Poisson extensions, and our proof automatically comes with a \emph{sharp stability} version of that inequality. 

Our techniques highlight the intertwining of geometric and complex-analytic arguments involved in the context of concentration inequalities. In particular, in the process of deriving uniform results, we obtain a refinement over the proof of the result in \cite{Inv2}, further improving the current understanding of the geometry of near extremals in all contexts under consideration. 
\end{abstract}

\maketitle

\section{Introduction}

Given a fixed function $g \in L^2(\R),$ the \emph{Wavelet transform} with window $g$ is defined as
\begin{equation}\label{eq:wavelet-transform}
W_gf(x,y) = \frac{1}{y^{1/2}} \int_{\R} f(t)\overline{ g\left( \frac{t-x}{y}\right) }\, \diff t,
\quad \forall f \in L^2(\R),
\end{equation}
whenever $x \in \R, y > 0.$ This operator was introduced first by I. Daubechies and T. Paul in \cite{DaubechiesPaul}, where the authors were concerned to its relationship with time-frequency localization, in analogy to the same family of operators of a similar kind introduced in the case of the short-time Fourier transform by Daubechies \cite{Daubechies} and Berezin \cite{Berezin}. Together with the short-time Fourier transform, the Wavelet transform is one of the most standard ways to jointly encompass time-frequency and time-scale information of a given signal. We refer the reader to \cite{Kailath,Bello,Pfander} for uses of the wavelet transform in more applied contexts, and in particular to \cite{Abreu2012,Abreu2021, AbreuGrochRomero, AbreuPerRomero, GroechenigBook, WongWaveletBook} 
 and the references therein for a more recent mathematical point of view, especially regarding the problem of obtaining information from a domain from information on its localization operator. 

In this manuscript, we shall be interested in the continuous wavelet transform; how much of its mass, in an  $L^2(\C_+,y^{-2} \, dx \, dy)-$sense, can be concentrated on certain subsets of the upper half space, and stability estimates of such inequalities. Indeed, fix $\beta > 0.$ We then define $\psi_{\beta} \in L^2(\R)$ to be such that \[
\widehat{\psi_{\beta}}(t) = \frac{1}{c_{\beta}} \charf{[0,+\infty)} t^{\beta} e^{-t},
\]
where one lets $c_{\beta} = \int_0^{\infty} t^{2\beta - 1} e^{-2t} dt = 2^{2\beta -1}\Gamma(2\beta).$ Here, we normalize the Fourier transform as
\[
\widehat{f}(\xi) = \frac{1}{(2\pi)^{1/2}} \int_{\R} f(t) e^{-it \xi} \diff t.
\]
We define then the \emph{$\beta$-optimal concentration rate} on a set $\Omega \subset \C_+$ as 
\[
\lambda_{\beta}(\Om) := \sup \left\{ \int_{\Om} |W_{\overline{\psi_{\beta}}} f(x,y)|^2 \,\frac{ \diff x \diff y}{y^2} \colon f \in H^2(\C_+), \|f\|_2 = 1 \right\}.
\]
The constant $\lambda_{\beta}(\Om)$ measures the maximal Wavelet concentration of order $\beta >0$ in $\Om$. Regarding evaluating the quantities above and analyzing how large they can be, the analogue of this problem has appeared especially in the context of the Short-time Fourier transform in \cite{Abreu2021, NicolaTilli, NicolaTilli2}. For the presently studied case of the wavelet transform, we mention, in particular, the paper by L. D. Abreu and M. D\"orfler \cite{Abreu2012}, where the authors pose this question explicitly in their last remark. This problem has been recently settled in \cite{RamosTilli}, where the authors obtained the following result: 

\begin{theorem*}[Theorem~1.1 in \cite{RamosTilli}] It holds that
	\begin{equation}\label{eq:first-theorem}
  \lambda_{\beta}(\Om) \le \lambda_{\beta}(\Om^*),
	\end{equation}
	where $\Om^* \subset \C_+$ denotes any pseudohyperbolic disc so that $\nu(\Om) = \nu(\Om^*),$ where $\diff \nu(x,y) = \frac{\diff x \, \diff y}{y^2}.$ Moreover, equality holds in \eqref{eq:first-theorem} if and only if $\Om$ is a pseudohyperbolic disc of measure $\nu(\Om).$
\end{theorem*}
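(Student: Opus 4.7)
The plan is to translate the wavelet concentration problem into a concentration problem in a weighted Bergman space on $\disk$, and then to exploit the subharmonicity of the logarithm of the modulus of the extremizer to carry out a Faber--Krahn-type rearrangement in hyperbolic geometry.

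First I would make the wavelet--Bergman isometry explicit. Inverting the Fourier transform of $\psi_\beta$ gives
\[
\psi_\beta(t) = \frac{\Gamma(\beta+1)}{\sqrt{2\pi}\, c_\beta\, (1-it)^{\beta+1}},
\]
and using the identity $y - i(t-x) = i(\bar z - t)$ with $z = x+iy$, a direct calculation yields, for each $f \in H^2(\C_+)$,
\[
W_{\overline{\psi_\beta}} f(x,y) = C_\beta\, y^{\beta + 1/2}\, \overline{G(z)},
\]
for an explicit constant $C_\beta$ and $G$ holomorphic on $\C_+$. Admissibility of the Cauchy wavelet then implies that $f \mapsto G$ is a unitary isomorphism from $H^2(\C_+)$ onto the weighted Bergman space $A^2_{2\beta-1}(\C_+)$ with norm $\int_{\C_+} |G|^2 y^{2\beta-1}\, dx\, dy$, and thus
\[
\lambda_\beta(\Om) = \sup\Bigl\{\, |C_\beta|^2 \int_\Om |G(z)|^2 y^{2\beta - 1}\, dx\, dy :\ G \in A^2_{2\beta - 1}(\C_+),\ \|G\| = 1 \Bigr\}.
\]

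Using the Cayley transform, the problem transfers isometrically to the unit disc: $y$ becomes $(1-|w|^2)/|1-w|^2$, the measure $\diff\nu$ becomes the M\"obius-invariant hyperbolic area $(1-|w|^2)^{-2}\, dA(w)$, and pseudohyperbolic discs on $\C_+$ correspond to pseudohyperbolic discs on $\disk$. All three ingredients are M\"obius-invariant, so it suffices to compare $\Om$ with a Euclidean disc $\Om^* = \{|w| < r\}$ of matching hyperbolic area. A weak-compactness argument in $A^2_{2\beta - 1}(\disk)$ yields an extremizer $G_0$, satisfying the Euler--Lagrange equation $P_\Om G_0 = \lambda_\beta(\Om)\, G_0$, where $P_\Om$ is the Toeplitz operator with symbol $\charf{\Om}$.

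The main part of the argument is a hyperbolic rearrangement. Writing $u(w) := |G_0(w)|^2 (1 - |w|^2)^{2\beta + 1}$, the concentration equals $\int_\Om u\, d\nu$, and the layer-cake formula gives
\[
\int_\Om u\, d\nu = \int_0^\infty \nu\bigl(\Om \cap \{u > t\}\bigr)\, dt \le \int_0^\infty \min\bigl(\nu(\Om),\, \mu(t)\bigr)\, dt, \quad \mu(t) := \nu\bigl(\{u > t\}\bigr).
\]
Since $\log|G_0|$ is subharmonic (with Dirac masses at the zeros of $G_0$), and $\Delta \log(1-|w|^2) = -4(1-|w|^2)^{-2}$, the hyperbolic Laplacian satisfies $\Delta_{\mathrm{hyp}} \log u \ge -4(2\beta+1)$ in the distributional sense; this bound is M\"obius-invariant, and on $\Om^*$ the radial extremizer (a constant multiple of the Bergman reproducing kernel at $0$) gives $u^*(w) = c(1-|w|^2)^{2\beta + 1}$. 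A P\'olya--Szeg\H{o}-type argument on super-level sets of $u$ then delivers $\mu(t) \le \mu^*(t) := \nu(\{u^* > t\})$ for every $t > 0$, and integrating this estimate against the layer-cake formula produces $\lambda_\beta(\Om) \le \lambda_\beta(\Om^*)$.

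The main obstacle is to make the level-set comparison rigorous, especially in neighborhoods of the zero set of $G_0$ (where $\log u = -\infty$), near $\p\disk$ (where both the weight and the hyperbolic measure degenerate), and on nonsmooth level sets. These difficulties can be addressed by a Sard-type argument combined with a regularization of $\log u$ and a Jensen-type accounting of the contribution of the zeros. The rigidity statement --- equality if and only if $\Om$ is a pseudohyperbolic disc --- then follows by tracing the cases of equality in the P\'olya--Szeg\H{o} step, which force the extremizer $G_0$ to be, up to a M\"obius automorphism, the reproducing kernel of $A^2_{2\beta - 1}(\disk)$ at the center of the corresponding disc.
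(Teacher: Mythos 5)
Your overall route matches the one the present paper imports from \cite{RamosTilli}: transfer $W_{\overline{\psi_\beta}}$ to the Bergman transform $B_\alpha$ with $\alpha=2\beta-1$, pass to $\Berg(\disk)$ via a Cayley map, and run a rearrangement argument on the density $u(w)=|G(w)|^2(1-|w|^2)^{\alpha+2}$ with respect to hyperbolic area, driven by the subharmonicity of $\log|G|$ and the isoperimetric inequality. Two points, though. First, invoking weak compactness and an Euler--Lagrange equation to produce an extremizer $G_0$ is an unneeded detour: the rearrangement argument runs on an arbitrary $G$ with $\|G\|_{\Berg}=1$, and that is how one actually bounds $\lambda_\beta(\Om)$.

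Second --- and this is a genuine gap --- the claim that the P\'olya--Szeg\H{o} step yields $\mu(t)\le\mu^*(t)$ for all $t>0$, where $\mu$ is the distribution function of $u$ and $\mu^*$ that of the extremal density $v(w)=\frac{\alpha+1}{\pi}(1-|w|^2)^{\alpha+2}$, is false in general. With $\|G\|_{\Berg}=1$, both densities integrate to $1$ against hyperbolic area, so $\int_0^\infty\mu(t)\,\diff t=\int_0^\infty\mu^*(t)\,\diff t=1$; pointwise domination of distribution functions would then force $\mu=\mu^*$ a.e., i.e.\ $u$ already extremal. What the coarea formula, the isoperimetric inequality for the hyperbolic metric, and the lower bound on $\Delta_{\mathrm{hyp}}\log u$ actually yield is a \emph{differential inequality} for the decreasing rearrangement $u^*(s)$ of $u$, saturated by $v^*(s)=\frac{\alpha+1}{\pi}(1+s/\pi)^{-\alpha-2}$; equivalently, the ratio $r(s)=u^*(s)/v^*(s)$ is nondecreasing on $[0,\infty)$, which is precisely the formulation the present paper records as equivalent to Theorem~1.1 of \cite{RamosTilli}. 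Because the total integrals agree and $r$ is monotone, one gets $\int_0^{s_0}u^*(s)\,\diff s\le\int_0^{s_0}v^*(s)\,\diff s$ for every $s_0>0$, and it is \emph{this} integrated inequality, not a pointwise comparison of distribution functions, that plugs into the layer-cake bound $\int_\Om u\,\diff\mu\le\int_0^{\nu(\Om)}u^*(s)\,\diff s$. Rigidity then comes from tracing equality through the isoperimetric and $\Delta_{\mathrm{hyp}}\log u$ steps, forcing level sets to be pseudohyperbolic balls and $G$ to be a reproducing kernel up to a M\"obius map.
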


 In the context of the Short-time Fourier transform, we highlight that the analogous result to the one above was obtained in \cite{NicolaTilli} (see also \cite{NicolaTilli2} for a further generalization), and in the context of higher-dimensional Wavelet transforms it has been explored by the second and fourth authors of this manuscript \cite{KalajRamos}. We also highlight the papers \cite{Kalaj1,Kalaj2024,KalajMelentijevic, Llinares, Kulikov,KlukovNicolaOrtegaTilli,Melentijevic} and the references therein as further sources of instances where the ideas used in order to originally prove the main result in \cite{NicolaTilli} have been successfully applied. 

 The main purpose of this paper, on the other hand, will be to provide a \emph{sharp} stability version of the result above. Indeed, this has been done in \cite{Inv2} in a collaboration of the first and fourth authors of this paper with A. Guerra and P. Tilli in the case of the Short-time Fourier transform. We note that it has been recently brought to our attention that M.A. Garc\'ia-Ferrero and J. Ortega-Cerd\`a \cite{GarciaOrtega} have independently successfully achieved a similar result for the sharp \emph{polynomial} concentration problem, as considered by \cite{KlukovNicolaOrtegaTilli}, recovering also the original case considered in \cite{Inv2} as a limiting case -- see also \cite{NicolaRiccardi} for a different yet related stability result in the context of Hilbert-Schmidt norms of localization operators. A main difference of our main result and the ones in \cite{GarciaOrtega} is the presence of another limitting parameter in our analysis - namely, the one which is yielded when taking the Cauchy windows $\psi_{\beta}$ as defined before to be degenerate. Effectively, the first main result of this manuscript reads as follows: 

 \begin{numthm}\label{thm:main-wavelet} Let $\beta > 0$ and $\Omega \subset \C_+$ be such that $\nu(\Omega) = s$, where $\diff\nu(x,y) = \frac{\diff x \diff y}{y^2}$. Then we have that 
 \begin{equation}\label{eqn:StabilityFunction}
        \inf_{\substack{\abs{c} = \|f\|_2 ,\\ z \in \C_+}} \frac{\|f-c\pi_z \psi_{\beta}\|_2}{\|f\|_2} \leq C \of{1 + \frac{2\beta+1}{2\beta} \left[ \of{1+\frac{s}{4\pi}}^{2\beta} -1\right]}^{1/2} \eta(f;\Omega,\beta)^{1/2},
    \end{equation}
where we let $\pi_z \psi_{\beta}(t) = y^{-1/2} \psi_{\beta}\left( \frac{t-x}{y}\right)$ for $z = x + iy$, and 
\[
\eta(f;\Omega,\beta) = \frac{\|f\|_2 \left( 1 - \left( 1 + \frac{s}{4\pi}\right)^{-2\beta}\right) - \int_{\Omega} |W_{\psi_\beta}f(x,y)|^2 \, \diff\nu(x,y)}{\|f\|_2 \left( 1 - \left( 1 + \frac{s}{4\pi}\right)^{-2\beta}\right)}
\]
denotes the \emph{Wavelet deficit of order $\beta$}. Moreover, if we define the \emph{hyperbolic asymmetry} of $\Omega$ to be 
\begin{equation}\label{eqn:HyperbolicAsymmetry}
    \mcalA_{\C_+}(\Omega) := \inf \inset{ \frac{\nu(\Omega \Delta B(x,r))}{\nu(\Omega)} \colon \nu(B(x,r)) = \nu(\Omega) , \ x \in \C_+ },
\end{equation}
then there is an \emph{explicit} constant $K(s,\beta) > 0$ such that 
\[
\mcalA_{\C_+}(\Omega)  \le K(s,\beta) \eta(f;\Omega,\beta)^{1/2}. 
\]
 \end{numthm}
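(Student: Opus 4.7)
My strategy is to follow the Bergman-space blueprint of \cite{Inv2} (where the analogous statement is proved for the short-time Fourier transform), but to carry out each step with sufficient care to obtain constants that depend on $\beta$ in a uniform, controlled way; sending $\beta \to \infty$ after an appropriate rescaling should recover the short-time Fourier transform result as a limit, which serves as the sanity check to keep in mind throughout. The first step is to invoke the Paul-type isometry: for $f \in H^2(\C_+)$ one has $W_{\overline{\psi_\beta}}f(x,y) = y^{\beta+1/2} F(x+iy)$, where $F$ is holomorphic on $\C_+$ and sits in a weighted Bergman space with weight $y^{2\beta-1}$, and under this isometry the family $\{\pi_z \psi_\beta\}_{z \in \C_+}$ is mapped onto normalized reproducing kernels of that Bergman space. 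In this language, the stability problem becomes a near-extremal concentration problem for holomorphic functions on the upper half plane endowed with hyperbolic geometry.

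Setting $u(z) = |W_{\overline{\psi_\beta}}f(z)|^2$ and $\mu(t) = \nu(\{u > t\})$, I would next compare $\mu$ with the distribution function $\mu^*$ of the extremizer on a pseudohyperbolic disc of $\nu$-measure $s$. By the Faber-Krahn theorem quoted from \cite{RamosTilli}, $\mu(t) \le \mu^*(t)$; a layer-cake decomposition then turns the deficit $\eta(f;\Omega,\beta)$ into $L^1$-control of the gap $\mu^* - \mu$, and the explicit factor $\left(1+\tfrac{s}{4\pi}\right)^{2\beta}$ appearing in the statement arises at this step from the closed form of $\mu^*$. The heart of the argument is then a refined log-subharmonicity estimate for $\log|F|$: adapting and sharpening the analysis in \cite{Inv2}, I would show that near-equality in Faber-Krahn implies, with the sharp $\eta^{1/2}$-rate familiar from Fusco-Maggi-Pratelli type quantitative isoperimetric inequalities, that (i) the superlevel sets $\{u > t\}$ are close in $\nu$-measure to pseudohyperbolic discs centered at a common point $z_0 \in \C_+$, and (ii) after a phase rotation, $F$ itself is close in the Bergman norm to the normalized reproducing kernel at $z_0$.

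The second inequality in the theorem then follows directly from (i) together with the definition of $\mcalA_{\C_+}(\Omega)$. The first inequality is obtained by transferring (ii) back through the Paul isometry, yielding closeness of $f$ to $c\,\pi_{z_0}\psi_\beta$ with $|c|=\|f\|_2$; the prefactor $\left(1 + \tfrac{2\beta+1}{2\beta}\left[\left(1+\tfrac{s}{4\pi}\right)^{2\beta}-1\right]\right)^{1/2}$ is precisely the ratio between the top eigenvalue of the concentration operator on the extremizing disc and its spectral gap to the second eigenvalue, which appears naturally when one upgrades distributional closeness to an $L^2$ bound. The principal technical obstacle is keeping every constant uniform in $\beta$ across the full range $\beta > 0$: the Bergman reproducing kernel, the second-eigenvalue gap, and the extremizer distribution $\mu^*$ all depend non-trivially on $\beta$ and degenerate both as $\beta \to 0^+$ (where $\psi_\beta$ itself loses integrability) and as $\beta \to \infty$ (where an appropriate rescaling degenerates to the STFT setting). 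Tracking these dependencies with enough precision to produce the \emph{explicit} prefactor in the statement, and in particular verifying that $\beta \to \infty$ correctly recovers the STFT stability result of \cite{Inv2}, is what distinguishes the present argument from a routine adaptation of the fixed-$\beta$ case.
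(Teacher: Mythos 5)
Your high-level roadmap matches the paper's in broad strokes: reduce via the Bergman/Paul transform to a weighted Bergman space, pass to the rearranged distribution function and compare against the extremal profile, and convert deficit control into proximity to a reproducing kernel. But as written the proposal is a sketch of a strategy, not a proof, and it contains at least one concretely wrong attribution of where the prefactor comes from.

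The proposal asserts that the constant $\left(1 + \tfrac{2\beta+1}{2\beta}\left[\left(1+\tfrac{s}{4\pi}\right)^{2\beta}-1\right]\right)^{1/2}$ arises as a ratio of the top eigenvalue of the concentration operator on the extremizing disc to its spectral gap. That is not how the constant is obtained in the paper, and it is not at all clear that a spectral argument would produce it. In the actual proof the constant $M_\alpha(s)$ is extracted from a direct estimate on $\int_0^{s^*}(v^*-u^*)\,\diff s$, with a case analysis on whether $s_0 = \mu(\Omega)$ lies above or below the intersection point $s^*$ of $u^*$ and $v^*$, and each case yields a different explicit factor involving $\left(1+\tfrac{s}{\pi}\right)^{\alpha+1}$; the final constant is the maximum of these. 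The paper does carry out a second-variation (spectral-gap-flavoured) computation, but only in the sharpness section, to show the constant is \emph{optimal}, not to \emph{prove} the inequality.

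More importantly, you flag the $\beta\to 0^+$ (equivalently $\alpha\to -1$) endpoint as "the principal technical obstacle" but offer no mechanism for overcoming it, and this endpoint is precisely the paper's main technical contribution. The naive estimate in the case $s_0 < s^*$ produces a factor $\left[1-\left(1+\tfrac{s^*}{\pi}\right)^{-(\alpha+1)}\right]$ which is useless unless one can bound $s^*$ from above \emph{uniformly in} $\alpha$. The paper does this by replacing $u^*$ with a sharper comparison function $w(s)$ derived from the superlevel-set estimate (their Lemma 3.1, which also has to be re-proved with constants explicit and independent of $\alpha$), and then controls the intersection point $\tilde s$ of $w$ and $v^*$ via a convexity argument showing $y=v^*-w$ is bounded below by its tangent at the origin. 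Without some argument of this type the proposal cannot deliver a constant that stays bounded as $\beta\to 0^+$; the proof would go through for each fixed $\beta$ but would not be uniform, which is the whole point of the theorem. Similarly, the set-stability inequality is said to "follow directly from (i)", but (i) is the conclusion, not a hypothesis: proving that superlevel sets are close to discs with the $\eta^{1/2}$ rate and the stated $K(s,\beta)$ is done in the paper via an optimal-transport comparison between $A_\Omega\setminus\Omega$ and $\Omega\setminus A_\Omega$, together with a careful smallness-of-$\eps$ regime analysis; none of that is present here.
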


As usual in this context, the first main step in order to prove Theorem \ref{thm:main-wavelet} will be to translate it in terms of certain \emph{Bergman spaces}. Indeed, as noted in several preceding references in the literature \cite{Abreu2012,RamosTilli}, the Wavelet transform with window $\psi_{\beta}$ is directly related to the \emph{Bergman transform of order $2\beta -1$} by 
\[
B_{\alpha}f(z) = \frac{1}{y^{\frac{\alpha}{2} +1}} W_{\psi_{\frac{\alpha+1}{2}}} f(-x,y) = c_{\alpha} \int_0^{+\infty} t^{\frac{\alpha+1}{2}} \widehat{f}(t) e^{i z t} \diff t,
\]
where $z =  x + i y.$ Since this operator defines an isometry between $H^2(\C_+)$ and the space $\mathbf{B}_{\alpha}^2(\C_+)$ composed of analytic functions $g : \C_+ \to \C$ such that 
\[
\|g\|_{\mathbf{B}_{\alpha}^2(\C_+)}^2 = \int_{\C_+} |g(z)|^2 y^{\alpha} \diff \nu(z) < +\infty,
\]
we may work on the latter space instead. By mapping the upper half plane to the unit disk in a standard way, we may rephrase Theorem \ref{thm:main-wavelet} in the context of \emph{Bergman spaces of the unit disk}. We define, to that extent, the function 
\[
\theta_{\alpha}(s) = 1 - \left( 1 + \frac{s}{\pi}\right)^{-\alpha - 1}.
\]
The hyperbolic measure $d\mu$ on the hyperbolic disk is defined hence as 
\[
\mu(\Omega) = \int_{\Omega} (1-|z|^2)^{-2} \diff A(z). 
\]
The deficit now reads
\begin{equation}\label{eqn:BergmanDeficit}
    \delta(f;\Omega,\alpha) = \frac{\norm{f}_{\Berg} \theta_{\alpha}(s) - \int_\Omega |f(z)|^2 (1-|z|^2)^{\alpha + 2} \diff \mu(z)}{\norm{f}_{\Berg} \theta_{\alpha}(s)} , \quad \text{ where we set }\mu(\Om) = s,
\end{equation}
and the asymmetry is defined to be 
\begin{equation}\label{eqn:BergmanAsymmetry}
    \mcalA_{\disk}(\Omega)  = \inf \inset{ \frac{\mu(\Omega \Delta B(x,r))}{\mu(\Omega)} \colon \mu(B(x,r)) = \mu(\Omega) , \ x \in \disk } .
\end{equation}

In these terms, Theorem \ref{thm:main-wavelet} may be stated equivalently in the following form: 

\begin{numthm}\label{thm:BergmanStability}
    Let $\alpha > -1$ and $\Omega \subset \disk$ such that $\mu(\Omega) = s > 0$, and let $f \in \Berg(\disk)$. Then there is an absolute, computable constant $C >0$ such that
    \begin{equation}\label{eqn:BergmanStabilityFunction}
        \inf_{\substack{\abs{c} = \norm{f}_{\Berg} ,\\ \om \in \disk}} \frac{\norm{f-c \ff_{\om}}_{\mathbf{B}_{\alpha}^2}}{\norm{f}_{\mathbf{B}_{\alpha}^2}} \leq C \of{1 + \frac{\alpha+2}{\alpha+1} \left[ \of{1+\frac{s}{\pi}}^{\alpha+1} -1\right]}^{1/2} \delta(f;\Omega,\alpha)^{1/2} ,
    \end{equation}
    where $\ff_{\om}$ is a multiple of the reproducing kernel for $\Berg(\disk)$, explicitly given by 
    \[ \ff_{\om}(z) = \sqrt{\frac{\alpha + 1}{\pi}} \cdot \frac{(1-|\om|^2)^{\frac{\alpha+2}{2}}} {(1 - \overline{\om} z)^{\alpha+2}}.\]
    Moreover, for some explicit constant $K(s,\alpha)$ we also have
     \begin{equation}\label{eqn:BergmanStabilitySet}
         \mcalA_{\disk} (\Omega) \leq K(s,\alpha) \delta(f;\Omega,\alpha)^{1/2}.
     \end{equation}
\end{numthm}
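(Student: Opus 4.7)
My plan is to follow the general blueprint of \cite{Inv2} for the short-time Fourier transform, but with careful bookkeeping of how every constant depends on $\alpha$ in order to recover the announced \emph{uniform} estimate together with its two limiting cases. The starting observation is that the Faber-Krahn inequality quoted above admits a superlevel-set refinement: setting $u(z) = |f(z)|^2(1-|z|^2)^{\alpha+2}$ and $A_t = \inset{z \in \disk : u(z) > t}$, one expects each $A_t$ to be approximately a pseudohyperbolic disc once the deficit $\delta(f;\Omega,\alpha)$ is small. The layer-cake identity
\[
\int_\Omega u \, \diff \mu = \int_0^\infty \mu(\Omega \cap A_t) \, \diff t
\]
then rewrites the deficit as a weighted discrepancy between the distribution function of $u$ on $\Omega$ and on the pseudohyperbolic rearrangement produced by the theorem of \cite{RamosTilli}, and this is the engine of the whole argument.

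The first step is to establish the set-stability bound \eqref{eqn:BergmanStabilitySet}. By a pigeonhole/averaging argument on the level parameter $t$, one extracts a level $t_{0}$ for which $A_{t_0}$ nearly saturates the level-wise Faber-Krahn inequality; combining this with a quantitative hyperbolic Bonnesen-type isoperimetric inequality converts the closeness $\mu(\Omega \,\triangle\, A_{t_0}^\ast) \lesssim \delta^{1/2}$ into the asymmetry estimate on $\mcalA_\disk(\Omega)$. The subtle point here is keeping explicit track of the $\alpha$-dependence: the profile of the extremal $\ff_\omega$ together with the concentration function $\theta_\alpha(s)$ is what dictates the form of $K(s,\alpha)$, and preventing blow-up in $\alpha$ is the technical heart of the refinement of \cite{Inv2} mentioned in the abstract.

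For the function-stability estimate \eqref{eqn:BergmanStabilityFunction}, I would feed the set information back into the problem. Once $\omega \in \disk$ is identified as the approximate center of the extremizing pseudohyperbolic disc, one decomposes $f = c \ff_\omega + g$ orthogonally in $\Berg(\disk)$, with the phase of $c$ chosen so that $|c| = \norm{f}_{\Berg}$ up to controllable error. The set-stability bound combined with the deficit controls $\int_\Omega |g(z)|^2(1-|z|^2)^{\alpha+2} \, \diff\mu$, and then subharmonicity of $|g|^2(1-|z|^2)^{\alpha+2}$ together with the Bergman reproducing formula transfers this local smallness into a global bound on $\norm{g}_{\Berg}$. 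The prefactor $\bigl(1 + \tfrac{\alpha+2}{\alpha+1}\bigl[\of{1+\tfrac{s}{\pi}}^{\alpha+1} - 1\bigr]\bigr)^{1/2}$ emerges naturally as the quantitative cost of passing from a local $L^2$ control on $\Omega$ to a global one on $\disk$, through the weight $(1-|z|^2)^{\alpha+2}$.

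The principal obstacle I expect to face is precisely the uniformity in $\alpha$. A direct adaptation of the compactness-based argument of \cite{Inv2} loses quantitative control in $\alpha$; the key refinement I anticipate is to normalize via the Möbius transformation $\tau_\omega$ centered at the putative extremal $\omega$, reducing the relevant geometry to a one-dimensional weighted-measure problem in the radial variable $|\tau_\omega(z)|$. Besides being the mechanism that yields explicit constants, this normalization is what permits the two limiting regimes - the Fock/STFT limit $\alpha \to +\infty$ (recovering the result of \cite{Inv2}) and the Hardy-space limit $\alpha \to -1^+$ (the novel concentration inequality highlighted in the introduction) - to be treated simultaneously rather than as separate cases.
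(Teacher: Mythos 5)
The plan diverges from the paper's proof in both ordering and mechanism, and the divergence introduces a genuine gap in the function-stability part.

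The paper proves \eqref{eqn:BergmanStabilityFunction} \emph{first} and entirely independently of any set-stability information: it shows via a Taylor-expansion/flow estimate on the distribution function (Lemma~\ref{lemma:BergmanSuperLevelSetsEstimate}) that the superlevel sets of $u = |f|^2(1-|z|^2)^{\alpha+2}$ are, after normalization, nested between two explicit hyperbolic discs; this gives a lower bound $\int_0^{s^*}(v^*-u^*)\,\diff s \gtrsim \frac{\alpha+1}{\alpha+2}(1-a_0^2)$ (Lemma~\ref{lemma:BergmanLowerBoundDifference}); a case analysis then bounds $1-a_0^2$ by $M_\alpha(s)\,\delta(f;\Omega,\alpha)$; and finally the RKHS lemma converts $1-a_0^2$ into a distance from the family $\{c\,\ff_\omega\}$. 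The set-stability estimate \eqref{eqn:BergmanStabilitySet} is proved \emph{afterwards} and \emph{uses} the function stability as an essential input: one first deduces that $\varepsilon = \|f - \sqrt{\tfrac{\alpha+1}{\pi}}\|_\Berg$ is small, and only then are the inclusions $L_\Omega \subset A_\Omega \subset E_\Omega$ (between two explicit balls) available, which together with a mass-transport argument (the map $\mcalT$) give the asymmetry bound. Your proposal reverses this order, deriving function stability from set stability, but the paper's set-stability proof cannot even get off the ground without the function stability in hand, so you would need a genuinely different route to the asymmetry bound (the Bonnesen/pigeonhole idea you sketch is a candidate, but it is not the paper's and you have not verified it).

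The more serious gap is in the function-stability step. Your plan is to write $f = c\,\ff_\omega + g$ orthogonally, use the deficit to control $\int_\Omega |g|^2(1-|z|^2)^{\alpha+2}\,\diff\mu$, and then invoke ``subharmonicity together with the Bergman reproducing formula'' to upgrade this local $L^2$ bound to a global bound on $\|g\|_\Berg$. This last step is false as stated: local $L^2$ smallness of a holomorphic function on a fixed hyperbolic ball of measure $s$ places no useful bound on its global Bergman norm. Concretely, $g(z) = z^N/\sqrt{c_N}$ has unit $\Berg$-norm for every $N$, while $\int_\Omega |g|^2(1-|z|^2)^{\alpha+2}\,\diff\mu$ tends to $0$ as $N\to\infty$ for any bounded $\Omega \subset \disk$. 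The Bergman reproducing formula propagates pointwise information at a single point, not $L^2$ information on a subset, so it does not rescue the argument. Moreover, even the preliminary step — passing from the deficit $\delta(f;\Omega,\alpha)$ to a bound on $\int_\Omega |g|^2(1-|z|^2)^{\alpha+2}\,\diff\mu$ — is not addressed: expanding $u$ in terms of $c\,\ff_\omega$ and $g$ produces a cross term $2\,\mathrm{Re}\bigl(c\int_\Omega \overline{g}\,\ff_\omega\,(1-|z|^2)^{\alpha+2}\,\diff\mu\bigr)$ of size $O(\|g\|_\Berg)$, exactly the quantity you are trying to control, and nothing in the proposal shows it is absorbable. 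The paper's route via the one-dimensional rearrangement comparison of $u^*$ and $v^*$ and the coefficient $a_0^2 = |\langle f, 1\rangle_\Berg|^2/\|f\|_\Berg^2$ is precisely designed to avoid both of these obstacles, and I do not see how to patch your orthogonal-decomposition approach without effectively reproducing that machinery.
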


We note that both Theorem \ref{thm:main-wavelet} and Theorem \ref{thm:BergmanStability} are \emph{sharp} in terms of the dependence rate on $\beta$ and $\alpha$, respectively. This statement should be interpreted in the following sense (for the case of Theorem \ref{thm:BergmanStability}, with the other one having an entirely analogous formulation): as $\alpha \to \infty$, we are able to recover the main result in \cite{Inv2} from \eqref{eqn:BergmanStabilityFunction} -- which, as noted in \cite{Inv2} itself, is sharp not only on the order of exponent, but in terms of the best possible exponential dependency on the area of the sets under considerations; we refer the reader also to \cite{GarciaOrtega} where a similar result has been recently obtained for the \emph{positive curvature} case. Furthermore, if we let $\alpha \to -1$, we obtain that, for any fixed $\Omega \subset \disk$, the right-hand side of \eqref{eqn:BergmanStabilityFunction} converges towards a constant that depends only on $\mu(\Omega)$, a behaviour that can be attained for the left-hand side of \eqref{eqn:BergmanStabilityFunction} by choosing a suitable sequence of almost-optimal functions. 

This last part is the main additional cause of difficulty when comparing the results in this manuscript with the ones in \cite{Inv2} and \cite{GarciaOrtega}. Indeed, in \cite{Inv2}, one does not deal with specific parameters, and in \cite{GarciaOrtega}, the only relevant case for polynomial concentration inequalities is that of \emph{integer exponents}, which removes the obstruction of the lack of an endpoint result in Theorems \ref{thm:main-wavelet} and \ref{thm:BergmanStability}. 

In order to deal with this additional problem we need to improve the results and techniques from \cite{Inv2}. As a matter of fact, a main idea in the proof of the main results in \cite{Inv2} was to prove that $\mathbf{u}^*$ and $\mathbf{v}^*$ cannot be equal for points `too close' to the origin, where $\mathbf{v}^*(s) = e^{-s}$ and $\mathbf{u}^*$ denotes the one-dimensional rearrangement of a function $\mathbf{u}(z) = |F(z)|^2 e^{-\pi |z|^2}$. In order to prove the optimal control on the constant in \eqref{eqn:BergmanStabilityFunction}, we need a sort of converse of the principle above: if the point where they intersect is suitably controlled `from above' when we take $\alpha \to -1$ in Theorem \ref{thm:BergmanStability}, we are able to conclude our result. 

On the other hand, this last fact is significantly more challenging than the proof of the main theorems and lemmas in \cite{Inv2}, which are already technically involved. As a matter of fact, one would need a careful asymptotic analysis of the behavior of constants in \cite[Lemma~2.1]{Inv2}. As a first step towards this goal - and consequently towards a \emph{complete} understanding of the geometry of near-extremals to the Nicola-Tilli inequality \cite{NicolaTilli}, we analyze what happens when one replaces the function $u^*$ by a suitably close upper bound given by the Bergman case analogue of Lemma 2.1 in \cite{Inv2}, and control the intersection of the latter function with the Bergman analogue of $\mathbf{v}^*$. By using an underlying \emph{convexity} of the difference of the two comparison functions, we are able to explicitly and sharply estimate the intersection points under consideration, which yields Theorem \ref{thm:BergmanStability}, and also Theorem \ref{thm:main-wavelet} as a result. 

\vspace{2mm}

As a consequence of Theorem \ref{thm:BergmanStability} for $\alpha \to \infty$ and a rescaling argument, we are able to recover the main result in \cite{Inv2}: 

\begin{numthm}[Theorem~1.5~in~\cite{Inv2}]\label{thm:GGRT-main} There is an absolute, computable constant $C>0$ such that, for all measurable sets of finite measure $\Omega \subset \C$ and $F \in \mathcal{F}^2(\C)$, we have 
\begin{equation}\label{eqn:StabilityFockFunction} 
        \inf_{\substack{\abs{c} = \norm{F}_{\Fock} ,\\ z \in \C}} \frac{\norm{F-cF_{z}}_{\Fock}}{\norm{F}_{\Fock} } \leq C \left( e^{|\Omega|} \delta_{\Fock}(F;\Omega)\right)^{1/2}, 
\end{equation}
where 
\begin{equation}\label{eqn:DeficitFock}
\delta_{\Fock} (F;\Omega) = 1 - \frac{\int_{\Omega} |F(z)|^2 e^{-\pi |z|^2} \, \diff z}{(1-e^{-|\Om|})\|F\|_{\Fock}}.
\end{equation}
Moreover, for some universal explicit constant $K(|\Om|)$ we also have 
\[
\mathcal{A}(\Omega) \le K(|\Om|) \delta_{\Fock} (F;\Omega)^{1/2}, 
\]
where we let the Euclidean asymmetry be defined as 
\[
\mathcal{A}(\Om) := \inf \inset{ \frac{|\Omega \Delta B(x,r)|}{|\Omega|} \colon |B(x,r)| = |\Omega| , \ x \in \C } .
\]
\end{numthm}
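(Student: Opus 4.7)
The plan is to deduce Theorem \ref{thm:GGRT-main} from Theorem \ref{thm:BergmanStability} by the natural rescaling of the disk onto $\C$ coupled with the limit $\alpha \to \infty$. Given $F \in \Fock(\C)$ and $\Omega \subset \C$ of finite measure, for each $\alpha$ large enough that $\Omega_\alpha := \sqrt{\pi/(\alpha+1)}\,\Omega \subset \disk$, I would introduce the operator
\[
R_\alpha : \Fock(\C) \to \Berg(\disk), \qquad (R_\alpha F)(z) := \sqrt{\frac{\alpha+1}{\pi}}\, F\!\left(z\sqrt{\frac{\alpha+1}{\pi}}\right),
\]
and set $f_\alpha := R_\alpha F$. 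The elementary asymptotic $(1-\pi|w|^2/(\alpha+1))^\alpha \to e^{-\pi|w|^2}$ combined with dominated convergence immediately gives $\|f_\alpha\|_{\Berg} \to \|F\|_{\Fock}$ and $\int_{\Omega_\alpha} |f_\alpha|^2 (1-|z|^2)^{\alpha+2}\, \diff\mu(z) \to \int_\Omega |F|^2 e^{-\pi|w|^2}\diff w$. Since $s_\alpha := \mu(\Omega_\alpha) \sim \pi|\Omega|/(\alpha+1)$ and hence $\theta_\alpha(s_\alpha) = 1 - (1+s_\alpha/\pi)^{-(\alpha+1)} \to 1 - e^{-|\Omega|}$, we obtain $\delta(f_\alpha;\Omega_\alpha,\alpha) \to \delta_{\Fock}(F;\Omega)$ as $\alpha \to \infty$.

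Next, I would identify the limit of the multiplier in \eqref{eqn:BergmanStabilityFunction}: using $(1+|\Omega|/(\alpha+1))^{\alpha+1} \to e^{|\Omega|}$,
\[
1 + \frac{\alpha+2}{\alpha+1}\bigl[(1 + s_\alpha/\pi)^{\alpha+1} - 1\bigr] \longrightarrow 1 + (e^{|\Omega|} - 1) = e^{|\Omega|}.
\]
Applying Theorem \ref{thm:BergmanStability} to $(f_\alpha,\Omega_\alpha,\alpha)$ produces near-minimizers $(c_\alpha,\omega_\alpha) \in \C \times \disk$ (within $1/\alpha$ of the infimum). Setting $z_\alpha := \omega_\alpha\sqrt{(\alpha+1)/\pi}$, a direct substitution in the explicit formula for $\ff_{\omega_\alpha}$ yields
\[
(R_\alpha^{-1}\ff_{\omega_\alpha})(w) \;=\; \frac{(1 - \pi|z_\alpha|^2/(\alpha+1))^{(\alpha+2)/2}}{(1 - \pi\overline{z_\alpha}w/(\alpha+1))^{\alpha+2}} \;\xrightarrow{\alpha \to \infty}\; e^{\pi\overline{z_\alpha}w - \pi|z_\alpha|^2/2} = F_{z_\alpha}(w),
\]
which converges pointwise (and in $\Fock$-norm on compact subsets of $\C$ whenever $(z_\alpha)$ is bounded) to the unit-norm Fock reproducing kernel at $z_\alpha$. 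Consequently $\|f_\alpha - c_\alpha\ff_{\omega_\alpha}\|_{\Berg}/\|f_\alpha\|_{\Berg}$ should be asymptotically equal to $\|F - c_\alpha F_{z_\alpha}\|_{\Fock}/\|F\|_{\Fock}$.

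It then remains to promote this to a bound on the Fock infimum: in the regime where the right-hand side of \eqref{eqn:BergmanStabilityFunction} is informative, a tightness argument based on $\|\ff_{\omega_\alpha}\|_{\Berg} = 1$, $\|f_\alpha\|_{\Berg} \to \|F\|_{\Fock}$, and the triangle inequality forces $(z_\alpha)$ to remain bounded, so that passing to a subsequence $z_{\alpha_n} \to z^*$, $c_{\alpha_n} \to c^*$ with $\abs{c^*} = \|F\|_{\Fock}$ yields \eqref{eqn:StabilityFockFunction}; in the opposite regime the estimate is already trivial with $c=0$. For the set-asymmetry conclusion, one notes that on $\Omega_\alpha$ the weight $(1-|z|^2)^{-2}$ is $1+o(1)$ and the pseudohyperbolic balls of measure $s_\alpha$ realize the infimum in \eqref{eqn:BergmanAsymmetry}, so $\mcalA_\disk(\Omega_\alpha) \to \mcalA(\Omega)$; combined with convergence of the explicit constant $K(s_\alpha,\alpha)$ to a finite $K(|\Omega|)$, passage to the limit in \eqref{eqn:BergmanStabilitySet} gives the set-stability half of Theorem \ref{thm:GGRT-main}. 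The main obstacle is the compactness step above: the parameter space for the rescaled centers $z_\alpha$ is a disk of radius $\sqrt{(\alpha+1)/\pi}\to\infty$, and explicitly ruling out escape to infinity in the meaningful regime is the one non-routine ingredient; modulo this, the argument reduces to the $\lim_n(1-a/n)^n = e^{-a}$ asymptotics and standard dominated convergence.
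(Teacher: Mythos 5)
Your overall strategy — push $\alpha\to\infty$ in Theorem~\ref{thm:BergmanStability} after rescaling — is the same as the paper's, but the identification you use and the bookkeeping around it differ from what the paper does, and the one step you flag as "non-routine" is actually avoidable while other steps you treat as routine are where the real care is needed.

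The paper does \emph{not} take $f_\alpha := R_\alpha F$. Instead it defines $f_\alpha(z) = \sum_{k\ge0} a_k z^k/\sqrt{c_k}$, i.e.\ it transports the $\ell^2$ coefficient sequence of $F$ (with respect to the orthonormal monomial basis of $\Fock$) to the orthonormal monomial basis of $\Berg$. This is asymptotically the same as your $R_\alpha F$, but with the decisive advantage that $\|f_\alpha\|_{\Berg} = \|F\|_{\Fock}$ \emph{exactly} for every $\alpha$, and more generally the distance to the one-parameter family $\{c\,\ff_0\}$ is $\alpha$-independent: writing things out via coefficients, $\|f_\alpha - c\,\ff_0\|_{\Berg}^2 = |a_0-c|^2 + \sum_{k\ge1}|a_k|^2 = \|F - c F_0\|_{\Fock}^2$. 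With the rescaling $R_\alpha$ instead, even $\|f_\alpha\|_{\Berg}\to\|F\|_{\Fock}$ is not quite a one-line dominated convergence: the natural dominating function is $|F(w)|^2 e^{-\pi\frac{\alpha}{\alpha+1}|w|^2}$, which need not lie in $L^1(\C)$ for a general $F\in\Fock$; one must truncate to large balls and control tails, or else argue differently. (The paper, in its qualitative Section~5.1 argument, first reduces to \emph{bounded} $\Omega$ precisely for this kind of reason; you do not address unbounded $\Omega$.)

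The step you single out as the obstacle — tightness/compactness of the rescaled centers $z_\alpha$ — in fact has an elementary resolution that requires no subsequence at all. For $|\omega|<1$ and $z=\omega\sqrt{(\alpha+1)/\pi}$ one has the exact identity $\langle R_\alpha F,\ff_\omega\rangle_{\Berg} = F(z)\bigl(1-\tfrac{\pi|z|^2}{\alpha+1}\bigr)^{(\alpha+2)/2}$, and the pointwise inequality $\bigl(1-\tfrac{t}{\alpha+1}\bigr)^{(\alpha+2)/2}\le e^{-t/2}$ (valid for $0\le t\le\alpha+1$) gives $\bigl|\langle R_\alpha F,\ff_\omega\rangle_{\Berg}\bigr|\le |F(z)|\,e^{-\pi|z|^2/2}=\bigl|\langle F,F_z\rangle_{\Fock}\bigr|$. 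Taking suprema over $\omega\in\disk$ and using the reproducing-kernel identity $\inf_{|c|=\|f\|,\,\omega}\|f-c\ff_\omega\|^2 = 2\|f\|^2 - 2\|f\|\sup_\omega|\langle f,\ff_\omega\rangle|$, this immediately yields $\inf_{\Fock} \le \liminf_\alpha\inf_{\Berg}$ without any tightness argument. So the structure you guess is sound, but the "one non-routine ingredient" you expected is actually a one-inequality shortcut. The paper's Section~5.2 compresses all of this into the remark that the left-hand side is "the $\ell^2$-norm and hence $\alpha$-independent"; the fuller template is the Hardy-space ($\alpha\to-1$) argument in Section~6, which uses exactly the reproducing-kernel identity route.

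Finally, for the asymmetry half, "$(1-|z|^2)^{-2}=1+o(1)$ on $\Omega_\alpha$ hence $\mcalA_\disk(\Omega_\alpha)\to\mcalA(\Omega)$" skips the real content: the infimum defining $\mcalA_\disk$ ranges over balls centered at arbitrary points of $\disk$, and what is actually needed is the one-sided bound $\limsup_\alpha\mcalA_\disk(\Omega_\alpha)\ge\mcalA(\Omega)$ (the direction that lets you pass the inequality to the limit). The paper proves this by taking a minimizing sequence of Euclidean balls $B^n$ for $\mcalA(\Omega)$, rescaling, and computing $\mu(\Omega_\alpha\Delta B^n_\alpha)/s_\alpha$ explicitly in terms of $\sum_{k\ge1}k(\pi/\alpha)^k\int|z|^{2(k-1)}$ before letting $\alpha\to\infty$ and then $n\to\infty$. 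Your intuition is aimed at two-sided convergence, which is not what is used and is also not as easy to establish directly.
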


This is perhaps not surprising, as the main results in \cite{GarciaOrtega} manage to obtain the same result as a consequence of their methods. On the other hand, the fact that our results are also sharp in terms of dependence rate on $\alpha \to -1$ yield a \emph{new} concentration result for Hardy spaces, even in sharp quantitative form. In fact, even the qualitative version of this result, corresponding to the inequality
\begin{equation}
\int_{\Om} |f(z)|^2 (1-|z|^2)^{-1} \, \diff z \le \pi \, \log\left( 1+ \frac{s}{\pi} \right) \cdot \|f\|_{H^2}^2, \quad \text{ for all } f \in H^2(\disk),
\end{equation}
is new. In order to introduce such a result, we say that $f \in H^2(\disk)$ if 
\[
\|f\|_{H^2}^2 = \sup_{r \in (0,1)} \frac{1}{2\pi} \int_0^{2\pi} |f(re^{i \theta})|^2 \, \diff \theta < + \infty. 
\]
It is known that, if $f \in H^2(\disk)$, then it has a measurable finite radial limit $f(e^{i\theta})$ for almost all $\theta \in (0,2\pi]$, for which we have that 
\[
\|f\|_{H^2}^2 = \frac{1}{2\pi} \int_0^{2\pi} |f(e^{i \theta})|^2 \, \diff \theta. 
\]
\begin{numthm}\label{thm:HardyStability} There is an explicitly computable constant $C>0$ such that for all $\Omega \subset \disk$ with $\mu(\Omega) = s$ and all $f \in H^2(\disk)$, we have 
\begin{equation}\label{eqn:StabilityFunctionHardy}
\inf_{\substack{\abs{c} = \norm{f}_{H^2} ,\\ \om \in \disk}} \frac{ \|f - c \textbf{g}_\om\|_{H^2}}{\|f\|_{H^2}} \le C  \cdot \left[ 1 + \log^{1/2} \left( 1 + \frac{s}{\pi}\right)\right] \delta_{H^2}(f;\Omega)^{1/2},
\end{equation}
where 
\begin{equation}\label{eqn:DeficitHardy} 
\delta_{H^2} (f;\Omega) = 1 - \frac{\int_{\Omega} |f(z)|^2 (1-|z|^2)^{-1} \, \diff z}{\pi \log\left( 1 + \frac{s}{\pi}\right) \|f\|_{H^2}^2},
\end{equation}
and $\textbf{g}_\om = \frac{(1-|\om|^2)^{1/2}}{1- \overline{\om} z}$. Moreover, we additionally have that, for an explicit continuous function $N:(0,\infty) \to (0,\infty)$, 
\begin{equation}\label{eqn:AsymmetryHardy} 
\mathcal{A}_{\disk}(\Omega) \le N(s) \cdot \delta_{H^2}(f;\Omega)^{1/2}. 
\end{equation} 
\end{numthm}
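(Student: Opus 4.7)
The plan is to derive Theorem \ref{thm:HardyStability} from Theorem \ref{thm:BergmanStability} by passing to the limit $\alpha \to -1^+$, the natural endpoint at which the Bergman space $\Berg(\disk)$ degenerates into the Hardy space $H^2(\disk)$. Since any $f \in H^2(\disk)$ belongs to $\Berg(\disk)$ for every $\alpha > -1$ with $\norm{f}_{\Berg}^2 \leq \pi \norm{f}_{H^2}^2/(\alpha+1)$, Theorem \ref{thm:BergmanStability} can be applied at each such $\alpha$, and the task is to verify that every ingredient of \eqref{eqn:BergmanStabilityFunction} converges to its analogue in \eqref{eqn:StabilityFunctionHardy} (and likewise for \eqref{eqn:BergmanStabilitySet} and \eqref{eqn:AsymmetryHardy}).

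The crucial first step is the classical Hardy--Bergman asymptotic
\[
(\alpha+1)\norm{f}_{\Berg}^2 \xrightarrow[\alpha \to -1^+]{} \pi \norm{f}_{H^2}^2.
\]
This follows by writing $\norm{f}_{\Berg}^2 = 2\pi \int_0^1 r(1-r^2)^{\alpha} M_2(r,f) \, dr$, where $M_2(r,f) = \frac{1}{2\pi}\int_0^{2\pi}\abs{f(re^{i\theta})}^2 d\theta$ is nondecreasing with $M_2(r,f) \to \norm{f}_{H^2}^2$ as $r \to 1^-$, and using $\int_0^1 r(1-r^2)^{\alpha} \, dr = \tfrac{1}{2(\alpha+1)}$ with monotone convergence. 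Combining with $\theta_\alpha(s)/(\alpha+1) = (1-e^{-(\alpha+1)\log(1+s/\pi)})/(\alpha+1) \to \log(1+s/\pi)$ then gives $\norm{f}_{\Berg}^2 \theta_\alpha(s) \to \pi\log(1+s/\pi)\norm{f}_{H^2}^2$; separately, monotone convergence yields $\int_\Om \abs{f}^2(1-\abs{z}^2)^{\alpha} dA \to \int_\Om \abs{f}^2(1-\abs{z}^2)^{-1} dA$, which is finite thanks to the limiting Bergman concentration inequality itself. Therefore $\delta(f;\Om,\alpha) \to \delta_{H^2}(f;\Om)$, and the prefactor in \eqref{eqn:BergmanStabilityFunction} converges as $\tfrac{\alpha+2}{\alpha+1}[(1+s/\pi)^{\alpha+1}-1] \to \log(1+s/\pi)$.

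Next I would relate the two functional infima via the reproducing kernel structure. Using the identity $\inf_{\abs{c}=\norm{f}}\norm{f-ce}^2 = 2\norm{f}(\norm{f}-\abs{\langle f,e\rangle})$ for unit vectors $e$, together with the reproducing properties of $\ff_{\om}$ and $\textbf{g}_{\om}$, each infimum reduces to a supremum over $\om \in \disk$ of
\[
\frac{\abs{\langle f, \ff_{\om}\rangle_{\Berg}}}{\norm{f}_{\Berg}} = \frac{\abs{f(\om)}(1-\abs{\om}^2)^{(\alpha+2)/2}\sqrt{\pi/(\alpha+1)}}{\norm{f}_{\Berg}}, \qquad \frac{\abs{\langle f, \textbf{g}_\om\rangle_{H^2}}}{\norm{f}_{H^2}} = \frac{\abs{f(\om)}(1-\abs{\om}^2)^{1/2}}{\norm{f}_{H^2}},
\]
respectively. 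Since $(1-\abs{\om}^2)^{(\alpha+2)/2} \leq (1-\abs{\om}^2)^{1/2}$ for $\alpha \geq -1$, the first-step asymptotic implies that the Bergman supremum is bounded uniformly in $\om$ by $(1+o(1))$ times the Hardy supremum, so the Hardy infimum on the left of \eqref{eqn:StabilityFunctionHardy} is bounded above by the Bergman one plus $o(1)$. Applying Theorem \ref{thm:BergmanStability} at each $\alpha > -1$ and letting $\alpha \to -1^+$ then yields
\[
\inf_{\substack{\abs{c}=\norm{f}_{H^2}\\ \om \in \disk}} \frac{\norm{f - c\textbf{g}_{\om}}_{H^2}}{\norm{f}_{H^2}} \leq C(1+\log(1+s/\pi))^{1/2} \delta_{H^2}(f;\Om)^{1/2},
\]
which implies \eqref{eqn:StabilityFunctionHardy} via $(1+x)^{1/2} \leq 1 + x^{1/2}$. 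The asymmetry bound \eqref{eqn:AsymmetryHardy} follows identically from \eqref{eqn:BergmanStabilitySet} with $N(s) := \lim_{\alpha \to -1^+} K(s,\alpha)$.

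The main obstacle will be ensuring that $K(s,\alpha)$ remains uniformly bounded as $\alpha \to -1^+$, i.e., that the explicit constant appearing in \eqref{eqn:BergmanStabilitySet} extends continuously to the degenerate endpoint. This is precisely the sharp endpoint behavior highlighted in the discussion preceding Theorem \ref{thm:BergmanStability}: it requires the refined asymptotic analysis of the intersection of the rearrangement $\mathbf{u}^*$ with its comparison function, improving upon \cite[Lemma~2.1]{Inv2}. Once this uniform control is in place, the limiting procedure above proceeds without further difficulty.
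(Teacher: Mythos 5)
Your proposal is correct and follows the same fundamental strategy as the paper: apply Theorem \ref{thm:BergmanStability} and pass to the limit $\alpha \to -1^+$, checking that every term (left-hand infimum, deficit, and prefactor) converges to its Hardy analogue. There is, however, a noteworthy technical variant. The paper does \emph{not} apply Theorem \ref{thm:BergmanStability} to $f$ itself; instead it defines an auxiliary sequence $f_\alpha(z) = \sum_k a_k z^k/\sqrt{c_k}$ sharing the same $\ell^2$ Taylor coefficients as $f$, so that $\|f_\alpha\|_{\Berg} = 1$ for every $\alpha$, and then proves $\sqrt{\pi/(\alpha+1)}\,f_\alpha(\omega) \to f(\omega)$ pointwise via Gautschi's inequality for ratios of Gamma functions. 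You instead apply Theorem \ref{thm:BergmanStability} directly to $f \in H^2(\disk) \subset \Berg(\disk)$ and invoke the classical asymptotic $(\alpha+1)\|f\|_{\Berg}^2 \to \pi\|f\|_{H^2}^2$ to rescale; this bypasses the pointwise-in-$\omega$ uniformity concerns and is arguably cleaner for the function-stability part, and your derivation of the uniform bound $B_\alpha \le (1+o(1))H$ via $(1-|\omega|^2)^{(\alpha+2)/2} \le (1-|\omega|^2)^{1/2}$ is correct. On the set-stability side, you frame the uniform boundedness of $K(s,\alpha)$ as $\alpha \to -1^+$ as an obstacle still to be overcome; in fact the paper already supplies an explicit bound in Remark \ref{rmk:K-value} (a by-product of the proof of Theorem \ref{thm:BergmanStability}), so all that remains is to compute the $\limsup$ of that expression — a routine evaluation, not a new argument. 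Flagging this would have made your proposal complete.
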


This last result may be interpreted as follows: take any $H^2$ function on the unit circle - which may be identified with the space of functions $f \in L^2(-1/2,1/2)$ which have square-summable Fourier coefficients, and whose Fourier coefficients of negative order vanish. Consider its harmonic extension to the unit disk. Theorem \ref{thm:HardyStability} shows not only a \emph{sharp} bound for how much this Poisson extension can be concentrated on a given subset of the unit disk, but it also says that, whenever one is close to achieving that sharp bound, one is close to being a multiple the Poisson kernel itself, and furthermore the order of magnitude of the distance to the set of extremals is \emph{sharp}. 

The rest of this paper is organized as follows: in Section \ref{sec:Prelim} below we set out the groundwork for the proof of our main results, discussing the basic properties of Bergman spaces, the equivalence of Theorems \ref{thm:main-wavelet} and \ref{thm:BergmanStability}, and recalling the most useful notational convention used. In Section \ref{sec:MainProof}, we prove Theorem \ref{thm:BergmanStability}, which itself implies Theorem \ref{thm:main-wavelet}. We then discuss the sharpness of our results in Section \ref{sec:Sharpness}, and in Section \ref{sec:Fock}, we discuss how to obtain the Bargmann--Fock result from \cite{Inv2} as a consequence of our results. Finally, in Section \ref{sec:Hardy}, we prove Theorem \ref{thm:HardyStability}. 

\subsection*{Acknowledgements} The authors are thankful to M.A. Garc\'ia-Ferrero and J. Ortega-Cerd\`a for pointing us to their work \cite{GarciaOrtega} during the preparation of this manuscript and discussing the relationships between their main results and the ones here. We would also like to thank A. Guerra for several comments and suggestions that helped improve the exposition.  The third author acknowledges partial financial support by MPNTP grant 174017 Serbia. 

\section{Preliminaries}\label{sec:Prelim}

\subsection{Bergman spaces on \texorpdfstring{$\C_+$}{C+} and \texorpdfstring{$\disk$}{D} and the Bergman transform}
For every $\alpha>-1$, we define the Bergmann space $\mathbf{B}_{\alpha}^2 (\disk)$ of the unit disk as the Hilbert space of all functions
$f\colon\disk\to \C$ which are holomorphic in $\disk$ and satisfy that
\[
\| f\|_{\mathbf{B}_{\alpha}^2}^2 := \int_\disk |f(z)|^2 (1-|z|^2)^\alpha \diff z <+\infty.
\]
Analogously, the Bergman space of the upper half plane $\mathbf{B}_{\alpha}^2(\C_+)$ is defined as the set of analytic functions in $\C_+$ such that
\[
\|f\|_{\mathbf{B}_{\alpha}^2(\C_+)}^2 = \int_{\C_+} |f(z)|^2 y^{\alpha} \diff \nu(z),
\]
where $y = \text{Im}(z).$ These two spaces defined above do not only share similarities in their definition, but indeed it can be shown that they are \emph{isomorphic:} if one defines
\[
T_{\alpha}f(w) = \frac{2^{\alpha/2}}{(1-w)^{\alpha+2}} f \left(\frac{w+1}{i(w-1)} \right),
\]
then $T_{\alpha}$ maps $\mathbf{B}_{\alpha}^2(\C_+)$ to $\mathbf{B}_{\alpha}^2(\disk)$ as a \emph{unitary isomorphism.} For this reason, dealing with one space or the other is tantamount, an important fact in the proof of the main theorem below. Let us then focus on the case of $\disk$, and thus we abbreviate $\mathbf{B}_{\alpha}^2(\disk) = \mathbf{B}_{\alpha}^2$ from now on. The weighted $L^2$ norm defining this space is induced by the scalar product
\[
\langle f,g\rangle_\alpha := \int_\disk f(z)\overline{g(z)} (1-|z|^2)^\alpha \diff z.
\]
Here and throughout, $\diff z$ denotes the bidimensional Lebesgue measure on $\disk$.

An orthonormal basis of $\mathbf{B}_{\alpha}^2$ is given by the normalized monomials
$ z^n/\sqrt{c_n}$ ($n=0,1,2,\ldots$), where
\[
c_n = \int_\disk |z|^{2n}(1-|z|^2)^\alpha \diff z=
2\pi \int_0^1 r^{2n+1}(1-r^2)^\alpha \diff r=
\frac{\Gamma(\alpha+1)\Gamma(n+1)}{\Gamma(2+\alpha+n)}\pi.
\]
 Hence, a function $f \in \Berg$ satisfies the expansion 
 \begin{equation}\label{eqn:Bergman-exp} 
 f(z) = \sum_{k \ge 0} a_k \frac{z^k}{\sqrt{c_k}},
 \end{equation} 
 where the sequence $\{a_k\}_{k \ge 0} \in \ell^2(\N_{\ge 0})$ with $\|a_k\|_{\ell^2} = 1.$ 
 
 Now, we shall connect the first two subsections above by relating the wavelet transform to Bergman spaces, through the so-called \emph{Bergman transform.} For more detailed information, see, for instance \cite{Abreu} or \cite[Section~4]{Abreu2012}. 

Fix $\alpha > -1.$ Recall that the function $\psi_{\alpha} \in H^2(\C_+)$ satisfies
\[
\widehat{\psi_{\alpha}}(t) = \frac{1}{c_{\alpha}} \charf{[0,+\infty)} t^{\alpha} e^{-t},
\]
where $c_{\alpha} > 0$ is chosen so that $\| \widehat{\psi_{\alpha}} \|_{L^2(\R^+,t^{-1})}^2 =1.$ The \emph{Bergman transform of order $\alpha$} is, as stated in the introduction,  given by
\[
B_{\alpha}f(z) = \frac{1}{s^{\frac{\alpha}{2} +1}} W_{{\psi_{\frac{\alpha+1}{2}}}} f(-x,s) = c_{\alpha} \int_0^{+\infty} t^{\frac{\alpha+1}{2}} \widehat{f}(t) e^{i z t} \diff t.
\]
From this definition, it is immediate that $B_{\alpha}$ defines an analytic function whenever $f \in H^2(\C_+).$ Moreover, it follows directly from the isometric properties of the Wavelet transform that $B_{\alpha}$ is a unitary map between $H^2(\C_+)$ and $\mathbf{B}_{\alpha}^2(\C_+).$ We also note that $B_{\alpha}$ is actually an \emph{isomorphism} between $H^2(\C_+)$ and $\mathbf{B}_{\alpha}^2(\C_+)$ - see, for instance, \cite[Section~2.3]{RamosTilli} for a proof. 

\subsection{Equivalence between Theorem \ref{thm:main-wavelet} and \ref{thm:BergmanStability}} We quickly show that the two versions of our main result, as stated in the introduction, are actually equivalent to one another. 

Indeed, by the definition of the Bergman transform from before, we have that
\[
\int_{\Omega} |W_{\overline{\psi_{\beta}}} f(x,y)|^2 \, \frac{ \diff x \diff y}{y^2} = \int_{\tilde{\Omega}} |B_{\alpha}f(z)|^{2} y^{\alpha} \diff x \diff y,
\]
where $\tilde{\Omega} =\{ z = x + iy\colon -x+iy \in \Omega\}$ and $\alpha = 2\beta - 1.$ On the other hand, we may further apply the map $T_{\alpha}$ above to $B_{\alpha}f;$ this implies that
\[
\int_{\tilde{\Omega}} |B_{\alpha}f(z)|^{2} y^{\alpha} \diff x \diff y = \int_{\Omega'} |T_{\alpha}(B_{\alpha}f)(w)|^2 (1-|w|^2)^{\alpha} \diff w,
\]
where $\Omega'$ is the image of $\tilde{\Omega}$ under the map $z \mapsto \frac{z-i}{z+i}$ on the upper half plane $\C_+.$ Notice that, from this relationship, we have
\begin{align*}
\mu(\Omega') = \int_{\Omega'} (1-|w|^2)^{-2} \diff w  = \int_\disk \charf{\Omega'} \left( \frac{w+1}{i(w-1)} \right) (1-|w|^2)^{-2} \diff w = \frac{\nu(\Omega)}{4}. 
\end{align*}
This shows that, upon applying $B_{\alpha}$ and $T_{\alpha}$, the right-hand side of \eqref{eqn:StabilityFunction} equals that of \eqref{eqn:BergmanStabilityFunction}, and that quantifying over all $\Omega \subset \C_+$ with $\nu(\Omega) = s$ is the same as quantifying over all $\Omega' \subset \disk$ with $\mu(\Omega') = s/4.$ 

Furthermore, it follows from the mapping properties of the Bargmann transform (see, e.g., \cite[Eq.~(2.5)]{RamosTilli}) that $ B_{\alpha}(\pi_z \psi_{\alpha})(w)$ is a multiple of $\left( \frac{1}{w-\overline{z}}\right)^{\alpha + 2}$, which is mapped to a multiple of a function of the form $(1-\overline{\om}z)^{-\alpha-2}$ under the action of $T_{\alpha}$. Since the function $\mathbf{f}_\om$ in the statement of Theorem \ref{thm:BergmanStability} has $\Berg$-norm equal to $1$, it follows at once that by applying once more $T_{\alpha}$ and $B_{\alpha}$ to the left-hand side of \eqref{eqn:StabilityFunction}, one obtains the left-hand side of \eqref{eqn:BergmanStabilityFunction}, as desired. 

\subsection{Notation and notions from [\ref{labelnumber-RamosTilli}]} We fix, for the most part of the paper, the parameter $\alpha > -1.$ Recalling the notation in \cite{RamosTilli}, given $f \in \Berg$ with $\norm{f}_{\Berg} = 1$, we introduce the function
\begin{equation}\label{eqn:BergmanU}
    u(z) \coloneqq \abs{f(z)}^2 (1-\abs{z}^2)^{\alpha+2} ,
\end{equation}
and its distribution function
\begin{equation}
    \rho(t) \coloneqq \mu \of{\inset{u > t}}.
\end{equation}
Its inverse function will be denoted by $u^*(s)$, 
\begin{equation}\label{eqn:BergmanU*}
    \mu \of{\inset{u > u^*(s)}} = s,
\end{equation}
and we shall suppose without loss of generality that this reaches its maximum at $u^*(0) = T = \max_{\disk} u$. We further denote by $v^*(s) = \frac{\alpha+1}{\pi} \left(1 + \frac{s}{\pi} \right)^{-\alpha-2},$ which will play a similar role in the proof below as the function $\mathbf{v}^*(s) = e^{-s}$ does in \cite{Inv2}.

\section{Proof of Theorem \ref{thm:BergmanStability}}\label{sec:MainProof}

\subsection{Function stability}

In this part we address the proof of \eqref{eqn:BergmanStabilityFunction} by means of an estimate on the distribution function $\rho(t)$.


\begin{numlemma}\label{lemma:BergmanSuperLevelSetsEstimate}
There exists $\Tilde{c}_0 >0$ such that for all $c_0 \geq \tilde{c}_0$ there is $C_0 \in (c_0,1)$ and a constant $C > 0$ with the following property. If $f \in \Berg$ is such that $\norm{f}_{\Berg} = 1$ and $a_0^2 \geq C_0$, then
    \begin{equation}\label{eqn:BergmanSuperLevelSetsEstimate}
        \rho(t) \leq \pi \left(\frac{ 1+K_0 (1-a_0^2)}{a_{0}^2}\right) \left( \left( \frac{\frac{\alpha+1}{\pi} a_0^2}{t} \right)^{\frac{1}{\alpha+2}}-1 \right), \quad \forall t \,\, \in \left( \frac{\alpha+1}{\pi}c_0, \frac{\alpha+1}{\pi} \, a_0^2\right) ,
    \end{equation}
    where $K_0 = C / c_0^3$ and $\Tilde{c}_0 \in (0,1)$ is an explicit constant. 
\end{numlemma}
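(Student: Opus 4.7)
The plan is to use the decomposition $f = a_0 \phi_0 + g$ along the constant direction, with $\phi_0 = \sqrt{(\alpha+1)/\pi}$ the normalized constant function in $\Berg$ and $g \perp \phi_0$, so that $\|g\|_{\Berg}^2 = 1-a_0^2$ and $g(0) = 0$. The reproducing kernel identity $K(z,w) = \frac{\alpha+1}{\pi}(1-z\bar w)^{-\alpha-2}$ of $\Berg$ yields $u(z) = \frac{\alpha+1}{\pi}|\langle f, \widehat{K}_z \rangle|^2$, where $\widehat{K}_z := K(\cdot, z)/\sqrt{K(z,z)}$ is the unit-normalized reproducing kernel at $z$. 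Projecting $\widehat{K}_z$ along $\phi_0$ produces an orthogonal decomposition $\widehat{K}_z = s\, \phi_0 + h_z$ with $s := (1-|z|^2)^{(\alpha+2)/2}$ and $\|h_z\|^2 = 1 - s^2$; taking inner products and applying Cauchy--Schwarz on $|\langle g, h_z\rangle|$ delivers the pointwise bound
\[
\sqrt{u(z)\,\pi/(\alpha+1)} \leq a_0 s + \sqrt{(1-a_0^2)(1-s^2)}.
\]

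Next, I translate the superlevel set condition $u(z) > t$ into a concrete inclusion. Setting $\tau := \sqrt{\pi t/(\alpha+1)}$ and parametrising $a_0 = \cos\phi$, $s = \cos\psi$ with $\phi, \psi \in [0,\pi/2]$, the constraint $a_0 s + \sqrt{(1-a_0^2)(1-s^2)} > \tau$ becomes $|\psi-\phi| < \arccos\tau$. This gives the explicit lower bound $s \geq s_{\min}(t) := a_0 \tau - \sqrt{(1-a_0^2)(1-\tau^2)}$, which is positive thanks to the hypotheses $t > \frac{\alpha+1}{\pi}c_0$ and $a_0^2 \geq C_0$ (chosen so that $C_0 + c_0 \geq 1$), forcing $\tau^2 + a_0^2 > 1$. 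Since $\{s \geq s_{\min}\}$ is the Euclidean disc $\{|z|^2 \leq 1 - s_{\min}^{2/(\alpha+2)}\}$, whose hyperbolic area equals $\pi(s_{\min}^{-2/(\alpha+2)} - 1)$, I conclude the preliminary bound
\[
\rho(t) \leq \pi\bigl( s_{\min}(t)^{-2/(\alpha+2)} - 1\bigr).
\]

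It remains to reconcile this with the target expression, i.e.\ to verify
\[
s_{\min}(t)^{-2/(\alpha+2)} - 1 \leq \frac{1 + K_0(1-a_0^2)}{a_0^2}\left( \left( \tfrac{(\alpha+1)\, a_0^2}{\pi t} \right)^{1/(\alpha+2)} - 1\right),
\]
with $K_0 = C/c_0^3$. Setting $\eps := 1-a_0^2$, the right-hand side is linear in $\eps$ as $\eps \to 0$, while a direct Taylor expansion of $s_{\min}^{-2/(\alpha+2)}$ exhibits a $\sqrt{\eps}$-order correction stemming from the term $\sqrt{(1-a_0^2)(1-\tau^2)}$ in $s_{\min}$. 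The main obstacle is precisely closing this half-power gap: one must refine the Cauchy--Schwarz step bounding $|\langle g, h_z\rangle|$, either by a Young-type inequality $|a+b|^2 \leq (1+\eta)a^2 + (1+\eta^{-1})b^2$ with $\eta$ proportional to $\eps$, or by exploiting the structural constraint $g(0)=0$ (which forces an additional power of $|z|$ in $|g(z)|$), in the spirit of Lemma~2.1 of \cite{Inv2}. The denominator $c_0^3$ in $K_0$ then arises when bounding inverse powers of $1-\tau^2$ on the admissible range $\tau^2 \in (c_0, a_0^2)$, which appear naturally upon isolating the linear-in-$\eps$ contribution to $s_{\min}^{-2/(\alpha+2)}$.
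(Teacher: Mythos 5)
Your proposal has a genuine gap, and you have in fact diagnosed it yourself in the final paragraph but not closed it. The pointwise Cauchy--Schwarz step $\sqrt{u(z)\,\pi/(\alpha+1)} \le a_0 s + \sqrt{(1-a_0^2)(1-s^2)}$ is a \emph{radially symmetric} bound: it shows the superlevel set $\{u>t\}$ is contained in a centred Euclidean disc, and the hyperbolic area of that disc then unavoidably carries a $\sqrt{1-a_0^2}$ correction. No variant of Young's inequality or any better exploitation of the constraint $g(0)=0$ at the level of this single pointwise inequality can do better, because the cross term $\langle g, h_z\rangle$ genuinely is of size $\sqrt{1-a_0^2}$ at a fixed $z$; the saving comes only after integrating in the angular variable.

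The paper's proof captures exactly the cancellation that your approach discards. Writing the boundary of $\{u>t\}$ as a graph $r_\sigma(\theta)$ over the circle, implicitly defined via $g_\theta(r_\sigma(\theta),\sigma)=1$, the area $F(\sigma)=\mu(E_\sigma)$ satisfies $F'(0)=0$: at $\sigma=0$ the perturbation of the radius is a radial multiple of $h(r_0 e^{i\theta})$, and since $h=2\operatorname{Re}(R)$ is harmonic with $h(0)=0$ (which uses not only $g(0)=0$ but also $g'(0)=0$, coming from the Möbius normalisation placing the maximum of $u$ at the origin), the angular average of $h$ over any centred circle vanishes. Thus the first variation of the area along the perturbation parameter is zero, and the estimate $F(1)\le (1+C\delta_0^2/c_0^3)F(0)$ follows from a second-order Taylor expansion together with bounds $|r_\sigma'|\lesssim \delta_0 r_\sigma$, $|r_\sigma''|\lesssim \delta_0^2 r_\sigma$. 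This second-order mechanism is precisely what produces the linear $(1-a_0^2)$ dependence in the claimed bound. Your Cauchy--Schwarz approach replaces the oscillating quantity $2\operatorname{Re}\langle g, h_z\rangle$ by its modulus, which breaks the cancellation and therefore cannot reach the stated power of $(1-a_0^2)$. To repair the argument you would need to revert to tracking the level-set boundary as a $\theta$-dependent perturbation and use harmonicity to kill the first-order term, at which point the structure becomes essentially that of the paper.
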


\begin{remark} We note that the constant $\Tilde{c}_0$ in the statement of Lemma \ref{lemma:BergmanSuperLevelSetsEstimate} may be taken to be around $0.6194$. It may likely be improved by further refining the proof below, but since this does not affect the rest of the proofs in this manuscript, and since this also does not seem to dramatically improve any of the main results in this manuscript, we have not attempted to do so here. 
\end{remark}

\begin{proof}
    We begin by setting
    \[\delta^2 =\frac{1}{T}\left(1-a_{0}^2\right) = \frac{1}{T} \left(\frac{\alpha+1}{\pi}-T \right) \frac{\pi}{\alpha+1}.\]
    Denote $\delta_{0}^2 = 1-a_{0}^2$, through a Möbius transformation and a change of phase we may assume that $f(0) = \sqrt{T}$. Then,
    \[\frac{f(z)}{\sqrt{ T }} = 1+R(z) , \quad z \in \mathbb{D},\]
    and we can differentiate to find
    \[
    R(z) = \sum_{k \geq 2} \frac{a_{k}}{\sqrt{c_{k} T}} z^k ,\quad  R'(z) = \sum_{k \geq 2} k\frac{a_{k}}{\sqrt{c_{k} T}} z^{k-1}, \quad R''(z) = \sum_{k \geq 2} k(k-1)\frac{a_{k}}{\sqrt{c_{k} T}} z^{k-2} .
    \]
    Here, we have used the fact that $u$ attains its maximum at $0$ in order to conclude that the $z-$coefficient of $R$ vanishes; see subsection \ref{subsec:SetStability} for more details on this normalization. Using the Cauchy-Schwarz inequality, we come to
    \begin{equation}\label{eqn:BoundsR} \lvert R(z) \rvert \leq \delta g_{1}(\lvert z \rvert)^{1/2} , \quad \lvert R'(z) \rvert \leq \delta g_{2}(\lvert z \rvert)^{1/2} , \quad \lvert R''(z) \rvert \leq \delta g_{3}(\lvert z \rvert)^{1/2} , 
    \end{equation}
    where the functions $g_{i}$ are the following:
    \[ g_{i}(x) =
    \begin{cases}
        \frac{\alpha+1}{\pi}\left[(1-x^2)^{-(\alpha+2)}-1-(\alpha+2)x^2\right], \quad &\text{if} \ i=1, \\
        \frac{\alpha+1}{\pi}(\alpha+2)[-1+(1+(\alpha+2)x^2)(1-x^2)^{-(\alpha+4)}], \quad &\text{if} \ i=2, \\
        \frac{\alpha+1}{\pi}\left[(\alpha+2)(\alpha+3)(1-x^{2})^{-\left(\alpha+6\right)}(2+(\alpha+3)x^{2}(4+(\alpha+4)x^{2}))\right], \quad &\text{if} \ i=3 .
    \end{cases} \]
    Setting $z=re^{i \theta}$, the condition $u(r e^{i\theta}) > t$ implies that
    \begin{equation}\label{eqn:boundary-condition}
    \frac{t}{T} (1-r^2)^{-(\alpha+2)} < 1 + \delta^2 g_{1}(r) + h(r e^{i \theta}),
    \end{equation}
    and $h(z) = 2 \text{Re}(R(z))$ is defined as in \cite{Inv2}. Note that this function satisfies, by definition and the Cauchy-Riemann equations, that $|\nabla h(z)| = 2 |R'(z)|$ and $|D^2 h(z)| = 2\sqrt{2} |R''(z)|,$ which imply that we have 
    \begin{equation}\label{eqn:Bounds-h}
    \lvert h(z) \rvert \leq 2\delta g_{1}(\lvert z \rvert)^{1/2} , \quad \lvert \partial_r h(r e^{i\theta}) \rvert \leq 2\delta g_{2}(r)^{1/2} , \quad \lvert \partial_r^2 h(re^{i\theta}) \rvert \leq 2\sqrt{2} \delta g_{3}(r)^{1/2}. 
    \end{equation}
    If we define
    \[ g_{\theta}(r,\sigma) = \frac{t}{T}(1-r^2)^{-(\alpha+2)} - \delta^2 g_{1}(r) - \sigma h(r e^{i\theta}),\]
    then \eqref{eqn:boundary-condition} may be rewritten as $g_{\theta}(r,1)<1$. At $\sigma=0$, this is a radial function, and by employing the same considerations as in \cite{Inv2}, it defines a function $r_{\sigma}(\theta)$ implicitly via
    \[ g_{\theta}(r_{\sigma}(\theta),\sigma)=1,\]
    which for each $\sigma \in (0,1)$ describes the radius of $E_{\sigma} = \{ r e^{i \theta} \colon 0 \leq r \leq r_{\sigma}(\theta) \}$.

    \vspace{2mm}

    \noindent\textbf{Properties of $E_{\sigma}$}. We can now introduce a few properties of the sets $E_{\sigma}$. In particular, they are bounded and star-shaped: letting $t > t_0 = \frac{\alpha+1}{\pi} c_0$ for some $c_0 \in (0,1)$, we can estimate $\sigma$ by $1$, $h(z)$ by $\lvert R(z) \rvert$ and $\frac{t}{T} \geq t_{0} \frac{\pi}{\alpha+1} = c_0$ to reach
    \begin{equation}\begin{split}\label{eqn:InProofFirstSubtraction}
        g_{\theta}(r,\sigma) & \geq c_{0}(1-r^2)^{-(\alpha+2)} - \delta^2 \frac{\alpha+1}{\pi}\left[ (1-r^2)^{-(\alpha+2)}-1-(\alpha+2)r^2 \right] \\
        &\hspace{5cm}-2 \delta \sqrt{ \frac{\alpha+1}{\pi} \left[ (1-r^2)^{-(\alpha+2)}-1-(\alpha+2)r^2 \right]} .
    \end{split}\end{equation}
    Notice that the first expression in brackets can be bounded by $(1-r^2)^{-(\alpha+2)}$, and imposing that $\delta^2 \frac{\alpha+1}{\pi}$ be small in terms of $c_{0}$ effectively translates into a lower bound on $T$, giving rise to the condition $T > T_0$, or equivalently $a_0^2 > C_0$. Write
    \[ \delta^2 \frac{\alpha+1}{\pi} = \frac{1}{T} \left[ \frac{\alpha+1}{\pi} - T \right]  = \frac{\delta_{0}^2}{T} \frac{\alpha+1}{\pi}. \]
    The first subtraction in \eqref{eqn:InProofFirstSubtraction} can be bounded from below by requiring $\delta_{0}$ to be small in terms of $c_{0}$. The second subtraction can be dealt with similarly: we can estimate the function in brackets inside the square root from above by $(1-r^2)^{-(\alpha+2)}$, and we can therefore bound
    \begin{align*}
        g_{\theta}(r,\sigma) & \geq \left( c_{0} - \frac{\delta_{0}^2}{T} \frac{\alpha+1}{\pi} -2 \frac{\delta_{0}}{\sqrt{ T }} \sqrt{ \frac{\alpha+1}{\pi}} \right) (1-r^2)^{-(\alpha+2)}.
    \end{align*}
    Assume now that $\delta_{0} < c_{0}/8$. This means that,
    \[\frac{T\pi}{\alpha+1} > 1- \frac{c_{0}^2}{16} , \quad \text{i.e.} \quad \frac{1}{a_0^2} < \frac{1}{1- \frac{c_{0}^2}{16}} < \frac{16}{15}, \]
    and we reach
    \begin{align*}
        g_{\theta}(r,\sigma) & \geq \left( c_{0} - \frac{c_{0}}{16} \frac{\alpha+1}{T\pi} - \frac{c_{0}}{4} \sqrt{ \frac{\alpha+1}{T\pi}} \right) (1-r^2)^{-(\alpha+2)} \geq \frac{2}{3} c_{0} (1-r^2)^{-(\alpha+2)}.
    \end{align*} 
    We can finally bound the radius $r_{\sigma}$ of $E_{\sigma}$ defined by $g_{\theta}(r_{\sigma},\sigma)=1$ by the solutions to
    \[ \frac{2}{3} c_{0}(1-r^2)^{-(\alpha+2)} = 1 ,\]
    reaching
    \[ r_{\sigma}^2 \leq 1-\left( \frac{3}{2c_{0}} \right)^{-\frac{1}{a+2}} .\]
    Moreover, when $\sigma =0$, we have a sharper bound for $r_{0}$,
    \begin{equation}\label{eqn:BergmanBoundr0}
        r_{0}^2 \leq 1- \left[ \frac{c_{0}-\delta_{0}^2}{a_{0}^2-\delta_{0}^2} \right]^{\frac{1}{\alpha+2}} .
    \end{equation}

    To show that $E_{\sigma}$ is star-shaped, we can argue similarly. We just need to check that $g_{\theta}(r,\sigma)$ grows as $r$ increases. We differentiate with respect to $r$ and use \eqref{eqn:Bounds-h} to find 
    \begin{align*}
        \frac{ \mathrm{d}  }{ \mathrm{d} r } g_{\theta}(r,\sigma) &\ge 2\frac{t}{T}(\alpha+2)r(1-r^2)^{-(\alpha+3)} - 2\delta_{0}^2 \frac{\alpha+1}{\pi T} (\alpha+2)r ((1-r^2)^{-(\alpha+3)}-1)\\
        &\hspace{4cm}- 2e^{i\theta}\sigma \delta_{0} \sqrt{ \frac{\alpha+1}{\pi T} } \left[ 2(\alpha+2) r ((1-r^2)^{-(\alpha+3)}-1) \right]^{1/2} .
    \end{align*}
    The same arguments as in the previous discussion yield
    \[\frac{ \mathrm{d}  }{ \mathrm{d} r } g_{\theta}(r,\sigma) \geq 2r(\alpha+2)\left( c_{0} - {\delta_{0}^2} \frac{\alpha+1}{\pi T} -2 {\delta_{0}} \sqrt{ \frac{\alpha+1}{\pi T}} \right) (1-r^2)^{-(\alpha+3)}, \]
    a lower bound that remains positive as long as we let $\delta_{0} < c_{0}/8$ again.

    \vspace{2mm}
    
    \noindent\textbf{Strategy to show \eqref{eqn:BergmanSuperLevelSetsEstimate}}. The condition that $u(z) \geq t$ implies that $z \in E_{1}$, meaning that
    \[ \mu(\{ u > t \}) \leq \mu(E_{1}) =\frac{1}{2} \int_{0}^{2 \pi} \int_{0}^{r_{\sigma}(\theta)} \frac{r}{(1-r^2)^2} \, \mathrm{d}r  \, \mathrm{d}\theta , \quad \text{at} \ \sigma=1 .\]
    This suggests considering $F(\sigma) = \mu(E_{\sigma})$,
    \[ F(\sigma) = \int_{0}^{2 \pi} \int_{0}^{r_{\sigma}(\theta)} \frac{r}{(1-r^2)^2} \, \mathrm{d}r  \, \mathrm{d}\theta = \frac{1}{2} \int_{0}^{2\pi} \frac{r_{\sigma}(\theta)^2}{1-r_{\sigma}(\theta)^2} \, \mathrm{d}\theta .\]
    Through Taylor's formula, we get
    \[ F(s)= F(0)+F'(0)s+F''(\sigma) \frac{s^2}{2} ,\]
    for some $\sigma \in (0,s)$. Now, note that we have
    \[
    F'(\sigma) = \int_0^{2\pi} \frac{r_\sigma (\theta)}{(1-r_{\sigma}(\theta)^2)^2} \, \partial_{\sigma} r_{\sigma}(\theta) \, \diff \theta.
    \]
    In order to compute the derivative of $r_{\sigma}$ with respect to the parameter $\sigma,$ we differentiate implicitly the condition defining $r_{\sigma}(\theta)$, obtaining
    \begin{equation}\label{eqn:derivative-r-sigma}
    r_{\sigma}' = \frac{h(r_{\sigma}e^{i \theta})}{2 r_{\sigma} \frac{t}{T} (\alpha+2)(1-r_{\sigma}^2)^{-(\alpha+3)} - 2 r_{\sigma} \delta_{0}^2 \frac{\alpha+1}{\pi T} (\alpha+2)((1-r_{\sigma}^2)^{-(\alpha+3)}-1) -\sigma e^{i\theta}\partial_{r}h(r_{\sigma}e^{i\theta})}.
    \end{equation}
    At $\sigma = 0$, we obtain that $\partial_\sigma r_\sigma \Big|_{\sigma = 0}$ is a radial multiple of $h(r_0 e^{i\theta})$. Hence, 
    \[
    F'(0) = \varphi(r_0) \int_0^{2\pi} h(r_0 e^{i\theta}) \, \diff \theta.
    \]
    Thanks to the harmonicity of $h$ and the fact that $h(0) = 0$, we conclude that $F'(0) = 0$, which reduces the situation to
    \begin{align}\label{eqn:BergmanAreaTaylor}
        F(s) = F(0)+F''(\sigma) \frac{s^2}{2}.
    \end{align}

    \vspace{2mm}
    
    \noindent\textbf{Estimates on $r_{\sigma}$}. We now wish to exploit this by transferring the estimates for $r_{\sigma}$ and its derivatives with respect to $\sigma$ through to $\sigma=0$. Using that $\delta_{0} < c_{0}/8$ on the $\delta^2$ term in the denominator on \eqref{eqn:derivative-r-sigma}, together with \eqref{eqn:Bounds-h}, we have
    \[ r_{\sigma}' \leq \frac{2 \delta_{0} \sqrt{ \frac{\alpha+1}{\pi T} } [(1-r_{\sigma}^2)^{-(\alpha+2)}-1-(\alpha+2)r_{\sigma}^2]^{1/2} }{r_{\sigma} (\alpha+2) c_{0}(1-r_{\sigma}^2)^{-(\alpha+3)} - 2 \delta_{0} \sqrt{ \frac{\alpha+1}{\pi T} } [(\alpha+2)(1-r_{\sigma}^2)^{-(\alpha+4)}(1+(\alpha+2)r_{\sigma}^2 -(1-r_{\sigma}^2)^{\alpha+4})]^{1/2} } .\]
    The term in brackets on the right hand side can be rewritten and bounded as
    \[ (\alpha+2)[-1+(1+(\alpha+2)r_{\sigma}^2)(1-r_{\sigma}^2)^{-(\alpha+4)}] \leq (\alpha+2)[-1+(1-r_{\sigma}^2)^{-2(\alpha+3)}] ,\]
    where we simply used $1+(\alpha+2)r_{\sigma}^2 \leq (1-r_{\sigma}^2)^{-(\alpha+2)}$. Now we use the estimate
    \[ -1+(1-r_{\sigma}^2)^{-2(\alpha+3)} \leq  2(\alpha+3) r_{\sigma}^2 (1-r_{\sigma}^2)^{-2(\alpha+3)} ,\]
    which is equivalent to $1-2(\alpha+3)r_{\sigma}^2 \leq (1-r_{\sigma}^2)^{-2(\alpha+3)}$, and valid in our case. Indeed, it suffices to check that both sides are equal at $r_{\sigma}=0$ and that, as functions of $r_{\sigma}$, the derivative of the left-hand side is bounded by that of the right-hand side, which is direct. We then reach
    \[ (\alpha+2)[-1+(1+(\alpha+2)r_{\sigma}^2)(1-r_{\sigma}^2)^{-(\alpha+4)}] \leq 2(\alpha+2)(\alpha+3)r_{\sigma}^2(1-r_{\sigma}^2)^{2(\alpha+3)}. \]
    Upon taking the square root in as the denominator, we find
    \[  r_{\sigma}' \leq \frac{2 \delta_{0} \sqrt{ \frac{\alpha+1}{\pi T} } }{ \left[ c_{0} - 2 \delta_{0} \sqrt{ \frac{\alpha+1}{\pi T} } \sqrt{ \frac{2(\alpha+3)}{\alpha+2} }\right] } \cdot \frac{\left[(1-r_{\sigma}^2)^{-(\alpha+2)}-1-(\alpha+2)r_{\sigma}^2\right]^{1/2}}{(\alpha+2)r_{\sigma}^2(1-r_{\sigma}^2)^{-(\alpha+3)}} r_{\sigma}. \]
    The rightmost fraction on the right-hand side is bounded by $1$. Indeed, this can be checked to be true whenever 
    \[ (1-x^2)^{-(\alpha+2)}-1-(\alpha+2)x^2 \leq (\alpha+2)^2 x^4 (1-x^2)^{-2(\alpha+3)} , \quad x = r_{\sigma}, \]
    which can be easily verified for all $x \geq 0$ by means of a series expansion and coefficient comparison. Then, the remaining fraction is bounded by
    \[ \frac{2\delta_{0} \sqrt{ \frac{\alpha+1}{\pi T} }}{c_{0}-4\delta_{0} \sqrt{ \frac{\alpha+1}{\pi T} }} \leq \frac{2\delta_{0} \sqrt{ \frac{C}{C-1} }}{c_{0}\left(1-4\frac{1}{C}\sqrt{ \frac{C}{C-1} }\right)} \leq \frac{4\delta_{0}}{c_{0} \left( 1-\frac{8}{C} \right)} ,\]
    whenever $\delta_{0}< c_{0}/C$. Choosing $C=16$, for instance, we reach
    \[ r_{\sigma}' \leq K \frac{\delta_{0}}{c_{0}}r_{\sigma}, \quad \text{where} \ K=8. \]
    For the second derivative, we differentiate $r_{\sigma}'$ with respect to $\sigma$ to obtain
    \begin{align*}
        r_{\sigma}'' &= \frac{e^{i\theta} \partial_{r} h(r_{\sigma}e^{i \theta})}{2 r_{\sigma} \frac{t}{T} (\alpha+2)(1-r_{\sigma}^2)^{-(\alpha+3)} - 2 r_{\sigma} \delta_{0}^2 \frac{\alpha+1}{\pi T} (\alpha+2)((1-r_{\sigma}^2)^{-(\alpha+3)}-1) -\sigma e^{i\theta}\partial_{r}h(r_{\sigma}e^{i\theta})} r_{\sigma}' \\
        &- \frac{2 \frac{t}{T} (\alpha+2) r_{\sigma}' \left[ (1-r_{\sigma}^2)^{-(\alpha+3)}+2r_{\sigma}^2(\alpha+3)(1-r_{\sigma}^2)^{-(\alpha+4)} \right]}{\left[ 2 r_{\sigma} \frac{t}{T} (\alpha+2)(1-r_{\sigma}^2)^{-(\alpha+3)} - 2 r_{\sigma} \delta_{0}^2 \frac{\alpha+1}{\pi T} (\alpha+2)((1-r_{\sigma}^2)^{-(\alpha+3)}-1) -\sigma e^{i\theta}\partial_{r}h(r_{\sigma}e^{i\theta}) \right]^2} h(r_{\sigma }e^{i\theta})  \\
        &+ \frac{2 \delta_{0}^2 \frac{\alpha+1}{\pi T} (\alpha+2) \left[\left((1-r_{\sigma}^2)^{-(\alpha+3)}-1)r_{\sigma}' + r_{\sigma}(2r_{\sigma} r_{\sigma}' (\alpha+3) (1-r_{\sigma}^2)^{-(\alpha+4)}\right)\right]}{\left[ 2 r_{\sigma} \frac{t}{T} (\alpha+2)(1-r_{\sigma}^2)^{-(\alpha+3)} - 2 r_{\sigma} \delta_{0}^2 \frac{\alpha+1}{\pi T} (\alpha+2)((1-r_{\sigma}^2)^{-(\alpha+3)}-1) -\sigma e^{i\theta}\partial_{r}h(r_{\sigma}e^{i\theta}) \right]^2} h(r_{\sigma }e^{i\theta})\\
        &+ \frac{e^{i\theta} \left[ \partial_{r}h(r_{\sigma}e^{i\theta}) + \sigma e^{i\theta} r_{\sigma}' \partial_{r}^{2} h(r_{\sigma}e^{i\theta}) \right]}{\left[ 2 r_{\sigma} \frac{t}{T} (\alpha+2)(1-r_{\sigma}^2)^{-(\alpha+3)} - 2 r_{\sigma} \delta_{0}^2 \frac{\alpha+1}{\pi T} (\alpha+2)((1-r_{\sigma}^2)^{-(\alpha+3)}-1) -\sigma e^{i\theta}\partial_{r}h(r_{\sigma}e^{i\theta}) \right]^2} h(r_{\sigma}e^{i\theta}) \\
        & =: T_1 + T_2 +T_3 + T_4. 
    \end{align*}
    We address each term separately.

    \vspace{2mm}

    \noindent\underline{\textsc{First term}}: This can be estimated similarly to how we tackled $r_{\sigma}'$. Indeed, by using \eqref{eqn:Bounds-h} again, we see that
    \begin{align*}
        &|T_1| \le \frac{2 \delta_{0} \sqrt{ \frac{\alpha+1}{\pi T} } [(\alpha+2)(1-r_{\sigma}^2)^{-(\alpha+4)}(1+(\alpha+2)r_{\sigma}^2 -(1-r_{\sigma}^2)^{\alpha+4})]^{1/2}}{\left(c_{0}-4\delta_{0}\sqrt{ \frac{\alpha+1}{\pi T} }\right) r_{\sigma} (\alpha+2) (1-r_{\sigma}^2)^{-(\alpha+3)}} r_{\sigma}'\\
        &\hspace{2cm}\leq \frac{4 \delta_{0} \sqrt{ \frac{\alpha+1}{\pi T}}}{c_{0}-4\delta_{0}\sqrt{ \frac{\alpha+1}{\pi T} }} \cdot \frac{ (1-r_{\sigma}^2)^{-(\alpha+3)}}{(1-r_{\sigma}^2)^{-(\alpha+3)}} \leq K \frac{\delta_{0}}{c_{0}} \cdot K\frac{\delta_{0}}{c_{0}} r_{\sigma} = C \frac{\delta_{0}^2}{c_{0}^2} r_{\sigma},
    \end{align*}
    where $C$ can be taken as $8^{2}$ and we assume $\delta_{0} < c_{0}/16$ as before.

    \vspace{2mm}

    \noindent\underline{\textsc{Second term}}: The same estimate from before allows us to bound this term from above by
    \[ |T_2| \le \frac{4 C \frac{\delta_{0}^2}{c_{0}} \sqrt{ \frac{\alpha+1}{\pi T} } (\alpha+2) r_{\sigma} \left[(1-r_{\sigma}^2)^{-(\alpha+3)}+2r_{\sigma}^2(\alpha+3)(1-r_{\sigma}^2)^{-(\alpha+4)}\right] \left[ (1-r_{\sigma}^2)^{-(\alpha+2)}-1-(\alpha+2)r_{\sigma}^2 \right]^{1/2} }{ \left[ \left( c_{0}-4\delta_{0}\sqrt{ \frac{\alpha+1}{\pi  T} } \right) r_{\sigma} (\alpha+2) (1-r_{\sigma}^2)^{-(\alpha+3)}\right]^2} .\]
    Rewriting, the upper bound on the right-hand side above becomes
    \[ \frac{\frac{\delta_{0}^2}{c_{0}} C\sqrt{ \frac{\alpha+1}{\pi T} }r_{\sigma}}{\left( c_{0}-4\delta_{0}\sqrt{ \frac{\alpha+1}{\pi  T} } \right)^2} \cdot \frac{\left[(1-r_{\sigma}^2)^{-(\alpha+3)}+2r_{\sigma}^2(\alpha+3)(1-r_{\sigma}^2)^{-(\alpha+4)}\right] \left[ (1-r_{\sigma}^2)^{-(\alpha+2)}-1-(\alpha+2)r_{\sigma}^2 \right]^{1/2} }{ r_{\sigma}^2 (\alpha+2) (1-r_{\sigma}^2)^{-2(\alpha+3)}} . \]
    We now claim that 
    \[
    \frac{\left[(1-r_{\sigma}^2)^{-(\alpha+3)}+2r_{\sigma}^2(\alpha+3)(1-r_{\sigma}^2)^{-(\alpha+4)}\right] \left[ (1-r_{\sigma}^2)^{-(\alpha+2)}-1-(\alpha+2)r_{\sigma}^2 \right]^{1/2} }{ r_{\sigma}^2 (\alpha+2) (1-r_{\sigma}^2)^{-2(\alpha+3)}} \le 3
    \]
    for all $\alpha>-1$. We can check this by estimating $(1-r_{\sigma}^2)^{-(\alpha+3)} \leq (1-r_{\sigma}^2)^{-(\alpha+4)}$, which results in checking the bound
    \[ \frac{\left[1+2r_{\sigma}^2(\alpha+3)\right] \left[ (1-r_{\sigma}^2)^{-(\alpha+2)}-1-(\alpha+2)r_{\sigma}^2 \right]^{1/2} }{ r_{\sigma}^2 (\alpha+2) (1-r_{\sigma}^2)^{-(\alpha+2)}} \leq 3 ,\]
    and this can be done by using power series. Indeed, by distributing the first factor we may follow the same strategy as in the bound for $r_{\sigma}'$, coupled with the fact that $(\alpha+3)/(\alpha+2) \leq 2$ for $\alpha > -1$ and the expansion of the term $(1-r_{\sigma}^2)^{-(\alpha+2)}$.
    This, and the previous arguments, leads us to the upper bound
    $ C\frac{\delta_{0}^2}{c_{0}^3}r_{\sigma} $
    for $T_2$. 

    \vspace{2mm}

    \noindent \underline{\textsc{Third term}}: We once more employ the same argument in order to bound the denominator in $T_3$. This yields that we need to bound
    \[ \frac{2 \delta_{0}^2 \frac{\alpha+1}{\pi T} (\alpha+2) r_{\sigma}' \left[\left((1-r_{\sigma}^2)^{-(\alpha+3)}-1\right) + 2r_{\sigma}^2  (\alpha+3) (1-r_{\sigma}^2)^{-(\alpha+4)}\right]}{\left[ \left( c_{0}-4\delta_{0}\sqrt{ \frac{\alpha+1}{\pi  T} } \right) r_{\sigma} (\alpha+2) (1-r_{\sigma}^2)^{-(\alpha+3)}\right]^2} h(r_{\sigma }e^{i\theta}) ,\]
    which in turn may be bounded by 
    \[ \frac{4 K \delta_{0}^3 \left(\frac{\alpha+1}{\pi T}\right)^{3/2} \left( \frac{\delta_{0}}{c_{0}} \right) r_{\sigma}}{\left( c_{0}-4\delta_{0}\sqrt{ \frac{\alpha+1}{\pi  T} } \right)^2} \cdot \frac{ (\alpha+2)  \left[\left((1-r_{\sigma}^2)^{-(\alpha+3)}-1\right) + 2r_{\sigma}^2  (\alpha+3) (1-r_{\sigma}^2)^{-(\alpha+4)}\right]}{\left[ r_{\sigma} (\alpha+2) (1-r_{\sigma}^2)^{-(\alpha+3)}\right]^2} \sqrt{ \frac{\pi}{\alpha+1} }g_{1}(r_{\sigma})^{1/2} .\]
    The factor 
    \[
    \frac{ (\alpha+2)  \left[\left((1-r_{\sigma}^2)^{-(\alpha+3)}-1\right) + 2r_{\sigma}^2  (\alpha+3) (1-r_{\sigma}^2)^{-(\alpha+4)}\right]}{\left[ r_{\sigma} (\alpha+2) (1-r_{\sigma}^2)^{-(\alpha+3)}\right]^2} \sqrt{ \frac{\pi}{\alpha+1} }g_{1}(r_{\sigma})^{1/2}
    \]
    is bounded by $3$ for all $\alpha>-1$ since it is bounded by the corresponding function in the analysis of $T_2$, and thus $T_3$ is bounded by $K \frac{\delta_{0}^3}{c_{0}^3} r_{\sigma} ,$
    where $\delta_{0}<c_{0} /16$ and $K$ is a computable constant.

    \vspace{2mm}

    \noindent \underline{\textsc{Fourth term}}: Again, we employ the same strategies as we did in the previous parts, so as to find the upper bound
    \begin{align*}
        &|T_4| \le \frac{\delta^2 \of{\frac{\alpha+1}{\pi}}^2 \abs{(1-r_{\sigma}^2)^{-(\alpha+2)}-1-(\alpha+2)r_{\sigma}^2}^{1/2} \abs{-1+(1+(\alpha+2)r_{\sigma}^2)(1-r_{\sigma}^2)^{-(\alpha+4)}}^{1/2}}{\left[ 2 r_{\sigma} \frac{t}{T} (\alpha+2)(1-r_{\sigma}^2)^{-(\alpha+3)} - 2 r_{\sigma} \delta_{0}^2 \frac{\alpha+1}{\pi T} (\alpha+2)((1-r_{\sigma}^2)^{-(\alpha+3)}-1) -\sigma e^{i\theta}\partial_{r}h(r_{\sigma}e^{i\theta}) \right]^2} \\
        &\hspace{0.5cm}+ \frac{K \delta^2 \of{\frac{\alpha+1}{\pi}}^2 \frac{\delta_0}{c_0} r_{\sigma} \abs{(1-r_{\sigma}^2)^{-(\alpha+2)}-1-(\alpha+2)r_{\sigma}^2}^{1/2}}{\left[ 2 r_{\sigma} \frac{t}{T} (\alpha+2)(1-r_{\sigma}^2)^{-(\alpha+3)} - 2 r_{\sigma} \delta_{0}^2 \frac{\alpha+1}{\pi T} (\alpha+2)((1-r_{\sigma}^2)^{-(\alpha+3)}-1) -\sigma e^{i\theta}\partial_{r}h(r_{\sigma}e^{i\theta}) \right]^2} \\
        &\hspace{0.6cm}\cdot\abs{(\alpha+2)(\alpha+3)(1-x^{2})^{-\left(\alpha+6\right)}(2+(\alpha+3)x^{2}(4+(\alpha+4)x^{2}))}^{1/2} .
    \end{align*}
    Using the same techniques as for estimating the previous terms, we eventually reach
    \[ C_{1}\frac{\delta_{0}^2}{c_{0}^2}r_{\sigma} + C_{2} \frac{\delta_{0}^3}{c_{0}^3} r_{\sigma} \leq C \frac{\delta_{0}^2}{c_{0}^3} r_{\sigma .}\]
    Combining all these estimates, we reach
    \[ r_{\sigma}' \leq C \frac{\delta_{0}}{c_{0}}r_{\sigma}, \quad r_{\sigma}'' \leq C \frac{\delta_{0}^2}{c_{0}^3} r_{\sigma} ,\]
    where $C$ is a universal, computable constant. 

    \vspace{2mm}

    \noindent\textbf{Concluding the proof}. Coming back to \eqref{eqn:BergmanAreaTaylor}, and using the bounds for $r_{\sigma}'$ and $r_{\sigma}''$ that we just found, we can estimate
    \[ F''(\sigma) \leq \int_{0}^{2\pi} \left[ (3 (K')^2 + K'') \frac{r_{\sigma}^4}{(1+r_{\sigma}^2)^2} + 4(K')^2 \frac{r_{\sigma}^6}{1+r_{\sigma}^2} \right] \, \mathrm{d}\theta \leq C \frac{\delta_{0}^2}{c_{0}^3} F(\sigma) , \]
    where $C$ is a positive, computable constant, and $K'$ and $K''$ are simply the constants we found such that $r_{\sigma}' \leq K' r_{\sigma}$ and $r_{\sigma}'' \leq K'' r_{\sigma}$. Now we need to compare $F(\sigma)$ to $F(0)$ for $\sigma \in (0,1)$. For this, start by noting that
    \[ \log r_{\sigma} - \log r_{0} \leq \int_{0}^{\sigma} \frac{\lvert r_{s}' \rvert}{r_{s}} \, \mathrm{d}s \leq \sigma \frac{\delta_{0}}{c_{0}} \leq \log \sqrt{ 2 } \]
    whenever $\delta < \frac{c_{0}}{16}$. Hence $r_{\sigma}^2 \leq 2r_{0}^2 \leq 4 r_{\sigma}^2$. We then find
    \[ F(s) \leq \left( 1+C \frac{\delta_{0}^2}{c_{0}^3} s^2 \right) F(0),\]
    and therefore
    \begin{equation}\label{eqn:estimateonE1}
        \mu(E_{1}) = F(1) \leq \pi \left( 1+C \frac{\delta_{0}^2}{c_{0}^3} \right) \frac{r_{0}^2}{1-r_{0}^2} .
    \end{equation}
    Now $r_{0}$ is defined via
    \[ 1=\frac{t}{T}(1-r_{0}^2)^{-(\alpha+2)}  -\delta_{0}^2 \left( \frac{\alpha+1}{\pi T} \right) \left[ (1-r_{0}^2)^{-(\alpha+2)} -1-(\alpha+2)r_{0}^2 \right] .\]
    We can rephrase it so that it reads
    \[ \left( \left( \frac{T}{t} \right)^{\frac{1}{\alpha+2}}-1 \right) = (1-r_{0}^2)^{-1} \left[ 1+ \delta_{0}^2 \left( \frac{\alpha+1}{\pi T} \right)((1-r_{0}^2)^{-(\alpha+2)}-1-(\alpha+2)r_{0}^2) \right]^{-\frac{1}{\alpha+2}}-1 .\]
    We now claim that the right-hand side can be bounded from below by $(1-\delta_{0}^2) \frac{r_{0}^2}{1-r_{0}^2}$ for all $\alpha>-1$ provided that $c_{0}$ is sufficiently close to $1$. 

    To show this, we remark that by using the series representation for $(1-t^2)^{-(\alpha+2)}$, we can rephrase the inequality
    \begin{equation}\label{eqn:lower-bound-Prelim}
    (1-r_{0}^2)^{-1} \left[ 1+ \delta_{0}^2 \left( \frac{\alpha+1}{\pi T} \right)((1-r_{0}^2)^{-(\alpha+2)}-1-(\alpha+2)r_{0}^2) \right]^{-\frac{1}{\alpha+2}}-1 \geq (1-\delta_{0}^2) \frac{r_{0}^2}{1-r_{0}^2} 
    \end{equation}
    as
    \begin{equation}\label{eqn:DesiredBound-r_0}
    \sum_{n \geq 1} \frac{(\alpha+3)_{(n)}}{(n+1)!} r_{0}^{2n} \left( \frac{1}{a_{0}^2} -\delta_{0}^{2n} \right) \leq 1,
    \end{equation}
    where we let $a_{(n)} \coloneqq a \cdot (a-1) \cdots (a+n-1)$. Indeed, \eqref{eqn:lower-bound-Prelim} rewrites easily as 
    \[
    1 + \delta_0^2 \left( \frac{\alpha+1}{\pi T}\right) \cdot \left( (1-r_0^2)^{-(\alpha+2)} - 1 - (\alpha+2)r_0^2\right) \le \left(1- \delta_0^2 r_0^2\right)^{-(\alpha+2)}.
    \]
    Using the aforementioned series expansion of $(1-t^2)^{-(\alpha+2)}$ on both sides and rearranging, together with the fact that $T = \frac{\alpha+1}{\pi} a_0^2$, we get that \eqref{eqn:lower-bound-Prelim} is equivalent to 
    \[
    a_0^{-2} \sum_{n \ge 2} \frac{(\alpha+2)_{(n)}}{n!} r_0^{2n} \le (\alpha+2)r_0^2 + \sum_{n \ge 2} \frac{(\alpha+2)_{(n)}}{n!} r_0^{2n} \delta_0^{2n}. 
    \]
    Cancelling out the respective terms, we obtain \eqref{eqn:DesiredBound-r_0}, as claimed. 
    
    We then move on to show \eqref{eqn:DesiredBound-r_0}. The definition of $r_{0}$ can be read in terms of its series description as
    \begin{equation}\label{eqn:Definition-r_0Alternative} 1= \frac{t}{T}(1+(\alpha+2)r_{0}^2) + \left( \frac{t}{T} - \frac{\delta_{0}^2}{a_{0}^2} \right) \sum_{n \geq 2} \frac{(\alpha+2)_{(n)}}{n!} r_{0}^{2n} .
    \end{equation} 
    After we plug \eqref{eqn:Definition-r_0Alternative} into \eqref{eqn:DesiredBound-r_0}, we eventually reach that the inequality we wish to prove rewrites as 
    \begin{equation}\label{eqn:DesiredBound-2}
    \sum_{n \geq 2} \frac{(\alpha+3)_{(n)}}{(n+1)!} \left[ \frac{1}{a_{0}^2}-\delta_{0}^{2n}-\frac{(n+1)(\alpha+2)}{\alpha+2+n}\left( \frac{t}{T}-\frac{\delta_{0}^2}{a_{0}^2} \right) \right] r_{0}^{2n} \leq \frac{t}{T} + \left[ \frac{t}{T}(\alpha+2) -\frac{\alpha+3}{2}\left( \frac{1}{a_{0}^2}-\delta_{0}^2 \right) \right] r_{0}^2 .
    \end{equation}
    Furthermore, \eqref{eqn:DesiredBound-2} can be reduced to checking that
    \[ \sum_{n \geq 2} \frac{(\alpha+3)_{(n)}}{(n+1)!} \left[ 1- \frac{(n+1)(\alpha+2)}{\alpha+2+n} (c_{0}-\delta_{0}^2) \right] r_{0}^{2n} \leq c_{0} + \left[c_{0}(\alpha+2)-\frac{\alpha+3}{2}(1-a_{0}^2 \delta_{0}^2)\right] r_{0}^2 .\]
    In order to prove this, we first note the estimates
    \[ \frac{(n+1)(\alpha+2)}{\alpha+2+n} \geq 1, \quad (n+1)! \geq n! \quad \text{and} \quad 0 \leq a_{0}^2 \delta_{0}^2,\]
    which are valid for all $n \in \N$ and $\alpha > -1.$ Upon using these inequalities, it suffices to prove 
    \[ \left[ 1- c_{0}+\delta_{0}^2 \right] \sum_{n \geq 2} \frac{(\alpha+3)_{(n)}}{n!} r_{0}^{2n} \leq c_{0} + \left[c_{0}(\alpha+2)-\frac{\alpha+3}{2}\right] r_{0}^2 . \]
    Now we use the fact that the left-hand side corresponds to the series expression of $(1-r_{0}^2)^{-(\alpha+3)}$ except for the first two terms, whereby we equivalently write the desired assertion as
    \[ \left[ 1- c_{0}+\delta_{0}^2 \right] (1-r_{0}^2)^{-(\alpha+3)} \leq 1+\delta_{0}^2 + \left[(1+\delta_{0}^2)(\alpha+2)+1+\delta_{0}^2-c_{0}-\frac{\alpha+3}{2}\right] r_{0}^2 . \]
    We may observe that the coefficient of $r_{0}^2$ in brackets in the right-hand side is actually positive, as it simply amounts to 
    \[
    (\alpha+3)\left( \frac{1}{2} + \delta_0^2\right) - c_0 > 0,
    \]
    which is true for $\alpha > -1$ as long as $c_0 < 1.$ We thus further reduce matters to showing
    \[ (1-r_{0}^2)^{-(\alpha+3)} \leq \frac{1+\delta_{0}^2}{1+\delta_{0}^2-c_{0}} ,\]
    which on the other hand is equivalent to proving that
    \begin{equation}\label{eqn:upper-Bound-r_0-wanted}
    r_{0}^2 \leq 1-\left( \frac{1+\delta_{0}^2}{1+\delta_{0}^2-c_{0}} \right)^{-\frac{1}{\alpha+3}} .
    \end{equation}
    We now impose that the right-hand side of \eqref{eqn:upper-Bound-r_0-wanted} should be at least the bound we already have for $r_{0}^2$ from \eqref{eqn:BergmanBoundr0}, as this is clearly enough to conclude. This, on the other hand, is ensured as long as $c_0$ is sufficiently close to $1$. Indeed, write
    \[ 1-\left( \frac{c_{0}-\delta_{0}^2}{a_{0}^2-\delta_{0}^2} \right)^{\frac{1}{\alpha+2}} \leq 1-\left( \frac{1+\delta_{0}^2}{1+\delta_{0}^2-c_{0}} \right)^{-\frac{1}{\alpha+3}} ,\]
    which we can ensure by checking that
    \[ \left( \frac{c_{0}-\delta_{0}^2}{a_{0}^2-\delta_{0}^2} \right)^{2} \geq \frac{1+\delta_{0}^2-c_{0}}{1+\delta_{0}^2} .\]
    Upon using the properties on $\delta_{0}^2$ and $a_{0}^2$ that we have, we reach
    \[ \left( c_{0}-\frac{c_{0}^2}{16^2} \right)^2 \geq 1-\frac{c_{0}}{1+\frac{c_{0}^2}{16^2}} ,\]
    which is satisfied whenever $c_{0} \geq \tilde{c}_{0}$, and $\tilde{c}_{0}$ is the only root of the equation $\left(t - t^2/16^2\right)^2 - 1 + \frac{t}{1 + t^2/16^2} = 0$ inside $(0,1)$, whose numerical value equals approximately $0.6194$. Coming back to \eqref{eqn:estimateonE1}, we finally conclude that
    \[ \mu(E_{1}) \leq \pi \left( \frac{ 1+\delta_{0}^2 \frac{C}{c_{0}^3}}{a_{0}^2} \right) \left( \left( \frac{T}{t} \right)^{\frac{1}{\alpha+2}}-1 \right) ,\]
    where $c_{0} \geq \tilde{c}_{0}$, $\delta < c_{0}/16$, $t>t_{0} = c_{0} \frac{\alpha+1}{\pi}$ and $C$ is a positive, computable constant that does not depend on any parameters.
\end{proof}

Having shown \eqref{eqn:BergmanSuperLevelSetsEstimate}, the next step is to find an upper bound for
\[ \int_0^{s^*} \left(v^*(s) - u^*(s)\right) \diff s \]
in terms of the deficit $\delta(f;\Omega,\alpha)$. Here, $s^*$ denotes the minimal solution of the equation $v^*(s) =u^*(s).$

\begin{numlemma}\label{lemma:BergmanLowerBoundDifference}
    For every $f \in \Berg$, there holds
    \begin{equation}\label{eqn:BergmanLowerBoundDifference}
        C \frac{\alpha+1}{\alpha+2}(1-a_0^2) \leq \int_0^{s^*} \left(v^*(s) - u^*(s)\right) \diff s
    \end{equation}
\end{numlemma}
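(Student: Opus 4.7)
The plan is to split the analysis by the size of $a_0^2$, with threshold the constant $C_0$ from Lemma~\ref{lemma:BergmanSuperLevelSetsEstimate}. In the ``bulk'' regime $a_0^2 \leq C_0$, I would use the elementary pointwise bound $u^*(s)\le u^*(0) = T = \tfrac{\alpha+1}{\pi}a_0^2$. Since $v^*(s) \geq T$ precisely when $s \leq \tilde s := \pi(a_0^{-2/(\alpha+2)}-1)$, one obtains $v^*\ge T \ge u^*$ on $(0,\tilde s)$, hence $s^* \geq \tilde s$ and
\[
\int_0^{s^*}(v^*-u^*)\,\diff s \;\geq\; \int_0^{\tilde s}(v^*-T)\,\diff s \;=\; h(a_0^{2/(\alpha+2)}),
\]
with $h(y) := 1+(\alpha+1)y^{\alpha+2}-(\alpha+2)y^{\alpha+1}$. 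Since $h$ is decreasing on $(0,1)$, the right-hand side is bounded below by $h(C_0^{1/(\alpha+2)})$, and a direct study of $h$ near $y=1$ (using $h(1)=h'(1)=0$ and $h''(1) = (\alpha+1)(\alpha+2)$) yields a lower bound of order $\tfrac{\alpha+1}{\alpha+2}$, which suffices here because $(1-a_0^2)\in[1-C_0,1]$ is bounded.

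For the near-extremal regime $a_0^2 > C_0$, I would apply Lemma~\ref{lemma:BergmanSuperLevelSetsEstimate} to produce the pointwise upper bound $u^*(s)\le \bar u(s) := T\bigl(1+sa_0^2/(\pi M)\bigr)^{-(\alpha+2)}$, with $M = 1+K_0(1-a_0^2)$, valid on the interval $(0,s_0)$ determined by $\bar u(s_0)=c_0(\alpha+1)/\pi$. Set $\phi := v^* - \bar u$. Direct computation gives $\phi(0) = \tfrac{\alpha+1}{\pi}(1-a_0^2)$ and a closed-form expression for $\phi'(0)$; moreover, solving $v^*(s)=\bar u(s)$ explicitly shows that $\phi$ has a unique zero $s^{**}$ on $(0,\infty)$ and is strictly positive on $(0,s^{**})$. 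On this set one has $v^*\ge \bar u \ge u^*$, which forces $s^* \ge \min(s_0, s^{**})$.

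The decisive structural observation is that $\phi$ is \emph{convex} on $(0,s^{**})$. The second derivative $\phi''$ has exactly the same algebraic form as $\phi$, but with $\alpha$ replaced by $\alpha+2$ and $a_0^2$ replaced by $a_0^6/M^2$; in particular $\phi''$ also has a single zero $s^{**}_{\mathrm{conv}}$, and the inequality $s^{**}_{\mathrm{conv}} > s^{**}$ reduces algebraically to the elementary bound $M > a_0^{2(\alpha+1)/(\alpha+2)}$, which is automatic since $M\ge 1$. Convexity then implies $s_1 := -\phi(0)/\phi'(0) \le s^{**}$, so the tangent lower bound $\phi(s) \ge \phi(0)+\phi'(0)s$ integrated over $(0,\min(s_0,s_1))$ produces $\tfrac12\,\phi(0)\min(s_0,s_1)$.

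To finish, I would estimate $\min(s_0, s_1) \ge c/(\alpha+2)$ uniformly: the bound on $s_1$ follows from the identity $1-a_0^4/M = (1-a_0^2)(2+K_0-(1-a_0^2))/(1+K_0(1-a_0^2))$, which implies $s_1 \ge \pi/[(\alpha+2)(2+K_0)]$, while the bound on $s_0$ is produced via Bernoulli's inequality $(a_0^2/c_0)^{1/(\alpha+2)}-1 \ge \log(C_0/c_0)/(\alpha+2)$. Combined with $\phi(0) = \tfrac{\alpha+1}{\pi}(1-a_0^2)$ this yields the desired lower bound $C\,\tfrac{\alpha+1}{\alpha+2}(1-a_0^2)$ with $C$ absolute. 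I expect the main obstacle to be verifying convexity of $\phi$ on the full interval $(0,s^{**})$: while the reduction to $M > a_0^{2(\alpha+1)/(\alpha+2)}$ is conceptually clean, the derivation requires a careful algebraic manipulation exploiting that $\phi''$ inherits the exact algebraic structure of $\phi$; secondary technical points are the uniform positivity of the constant extracted from $h$ in Case~1 as $\alpha$ varies over $(-1,\infty)$, and the careful case analysis for which of $s_0$, $s_1$, $s^{**}$ realizes the minimum.
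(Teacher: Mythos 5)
Your proof is correct, and it takes a genuinely different route from the paper's. The paper proves this lemma by passing to the layer-cake integral $\int_{\tau_1}^{T}\bigl(\nu(t)-\omega(t)\bigr)\,\diff t$, changing variables to $t=\frac{\alpha+1}{\pi}r$, expanding the integrand in a power series in $(1-r)$ and $\frac{a_0^2-r}{a_0^2}$, and observing that the sign-change points $r_n$ of the $n$-th summand form a decreasing sequence; it then discards all but the $n=1$ term and integrates that one term explicitly. You instead invert Lemma~\ref{lemma:BergmanSuperLevelSetsEstimate} to a pointwise bound $u^*\le\bar u$ and prove a tangent-line lower bound $\phi\ge\phi(0)+\phi'(0)s$ on $(0,s^{**})$ via convexity, then integrate the tangent line. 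The key structural observation — that $s^{**}_{\mathrm{conv}}>s^{**}$ reduces to $M>a_0^{2(\alpha+1)/(\alpha+2)}$, which is automatic since $M\ge 1$ — is correct; I verified that by writing $\phi''(s)=0$ and $\phi(s)=0$ in terms of the monotone ratio $\frac{1+sa_0^2/(\pi M)}{1+s/\pi}$ and raising to the power $(\alpha+2)(\alpha+4)/2$. (Your remark that $\phi''$ has ``the exact algebraic structure of $\phi$'' is slightly loose, since the inner scaling factor $a_0^2/M$ is unchanged, but the computation goes through.) Interestingly, your argument is precisely the tangent/convexity device the paper itself uses in \textsc{Step II} of the proof of \eqref{eqn:BergmanFunctionStabilityBeforeRKHS}, so you have essentially unified the two steps into one. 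A further advantage of your version is that your Case~1 ($a_0^2\le C_0$, where one only uses the trivial bound $u^*\le T$ and the explicit function $h(y)=1+(\alpha+1)y^{\alpha+2}-(\alpha+2)y^{\alpha+1}$) handles the regime where $a_0^2$ is bounded away from $1$ explicitly — the quantitative positivity of $h$ follows since $g(\beta):=1-\beta(1-C_0)-C_0^\beta$ is concave with $g(0)=g(1)=0$ and $g''\le -C_0(\log C_0)^2$, giving $h(C_0^{1/(\alpha+2)})\ge \frac{C_0(\log C_0)^2}{2}\cdot\frac{\alpha+1}{\alpha+2}$ — whereas the paper's proof tacitly presupposes $a_0^2\ge C_0$ (needed for Lemma~\ref{lemma:BergmanSuperLevelSetsEstimate} and for the integration range $\tau_1<T$ to be nonempty) and never addresses the complementary regime.
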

\begin{proof}
    We first let $t^* = v^*(s^*)$. We claim then that $t^* \le \frac{\alpha + 1}{\pi} c, $ for $c> 0$ an absolute constant. Indeed, if $t^* \le \frac{\alpha+1}{\pi} c_0$, where $c_0$ is given by Lemma \ref{lemma:BergmanSuperLevelSetsEstimate}, we are done. Otherwise, using \eqref{eqn:BergmanSuperLevelSetsEstimate} we have that 
    \begin{align}\label{eqn:level-sets-almost-bound}
    \pi \left( \frac{1+ \delta_0^2 \frac{C}{c_0^3}}{a_0^2}\right) \left( \left(\frac{T}{t^*}\right)^{\frac{1}{\alpha+2}} - 1 \right) \ge \rho(t^*) = s^* =  \pi \left( \left( \frac{\frac{\alpha+1}{\pi}}{t^*} \right)^{\frac{1}{\alpha+2}} -1 \right).
    \end{align}
    Denoting $\kappa(a_0) \coloneqq \frac{1+ \delta_0^2 \frac{C}{c_0^3}}{a_0^2}$, we have from \eqref{eqn:level-sets-almost-bound} that 
    \[
    \frac{\frac{\alpha+1}{\pi}}{t^*} \ge \left( \frac{1 - \frac{1}{\kappa(a_0)}}{a_0^{\frac{2}{\alpha+2}} - \frac{1}{\kappa(a_0)}} \right)^{\alpha+2}.
    \]
    We now claim that, whenever $a_0$ is sufficiently close to $1$ and $\alpha > -1$, then 
    \begin{equation}\label{eqn:upper-bound-uniform}
    \left( \frac{a_0^{\frac{2}{\alpha+2}} - \frac{1}{\kappa(a_0)}}{1 - \frac{1}{\kappa(a_0)}} \right)^{\alpha+2} < 1 - \epsilon_0,
        \end{equation}
    for some $\epsilon_0 > 0$. Indeed, a direct computation shows that the left-hand side of \eqref{eqn:upper-bound-uniform} defines an \emph{increasing} function of $a_0 \in (0,1)$, and its limit as $a_0 \to 1$ is simply
    \[
    \left( 1 - \frac{1}{\left(1 + \frac{C}{c_0^3}\right) (\alpha+2)}\right)^{\alpha+2} \le e^{-\frac{1}{1 + \frac{C}{c_0^3}}},
    \]
    which proves \eqref{eqn:upper-bound-uniform}, and concludes our claim on boundedness of $t^*$. 
    
    We now move on to the proof of \eqref{eqn:BergmanLowerBoundDifference}. Calling $\omega(t)$ the right-hand side of \eqref{eqn:BergmanSuperLevelSetsEstimate}, and if $\nu$ is the inverse of $v^*$, given by
    \[ \nu(t) = \pi \left( \left( \frac{\frac{\alpha+1}{\pi}}{t} \right)^{\frac{1}{\alpha+2}} -1 \right), \]
    we may bound
    \begin{align*}
        \int_{0}^{s^*} \left(v^{*}(s) -u^{*}(s) \right) \, \mathrm{d}s &\geq \int_{\tau_{1}}^{T} \left( \nu(t)-\omega(t) \right) \, \mathrm{d}t\\
        &\geq \pi \int_{\tau_{1}}^{T} \left[ \left( \left( \frac{\frac{\alpha+1}{\pi}}{t} \right)^{\frac{1}{\alpha+2}} -1 \right) -\frac{\left( 1+C \frac{\delta_{0}^2}{c_{0}^3} \right)}{a_{0}^2} \left( \left( \frac{T}{t} \right)^{\frac{1}{\alpha+2}}-1 \right) \right] \, \mathrm{d}t\\
        &\geq \pi \frac{\alpha+1}{\pi} \int_{r_{1}}^{a_{0}^2} \left[ r^{-\frac{1}{\alpha+2}}-1- \frac{\left( 1+C \frac{\delta_{0}^2}{c_{0}^3} \right)}{a_{0}^2} \left( \left( \frac{a_{0}^2}{r} \right)^{\frac{1}{\alpha+2}}-1 \right) \right] \, \mathrm{d}r \\ 
        &\geq \frac{\alpha+1}{\alpha+2} \int_{r_{1}}^{a_{0}^2} \sum_{n \geq 1} \frac{1}{n!} \frac{\prod_{j=0}^{n-1} (j\alpha+2j+1)}{(\alpha+2)^{(n-1)}} \left[ (1-r)^n -\left( 1+ \frac{C}{c_{0}^3} \delta_{0}^2\right) \frac{1}{a_{0}^2} \left( \frac{a_{0}^2-r}{a_{0}^2} \right)^n \right]  \, \mathrm{d}r .
    \end{align*}
    We changed variables to set $t=\frac{\alpha+1}{\pi}r$, $\tau_{1} = \frac{\alpha+1}{\pi} r_{1}$ and $T=a_{0}^2 \frac{\alpha+1}{\pi}$. We also used the fact that that $1 > r_{1}  \geq c_{0} > \tilde{c}_{0}$ holds for $\tau_1 > t$ that we chose.
    
    We now need to find a lower bound for the terms inside the integral in terms of $\delta_{0}^2$. Labelling
    \[ f_{n}(r) = (1-r)^n -\left( 1+ \frac{C}{c_{0}^3} \delta_{0}^2\right) \frac{1}{a_{0}^2} \left( \frac{a_{0}^2-r}{a_{0}^2} \right)^n ,\]
    notice that this this function is negative for $r<r_{n}$,
    \[ r_{n}= 1-\frac{\delta_{0}^2 \sqrt[n]{ \frac{1}{a_{0}^2} \left( 1+ \frac{C}{c_{0}^3} \delta_{0}^2 \right) }}{\sqrt[n]{ \frac{1}{a_{0}^2} \left( 1+ \frac{C}{c_{0}^3} \delta_{0}^2 \right) } -a_{0}^2} ,\]
    and positive between $r_{n}$ and $a_{0}^2$. The crucial observation is that the $r_{n}$ form a \emph{decreasing} sequence, meaning that $f_{n}(r)$ becomes positive before $f_{n-1}(r)$. In order to see this, note that $r_n$ is decreasing if, for instance, the sequence 
    \[
    \beta_n(y) = \frac{y^{1/n}}{y^{1/n} - a_0^2} = 1 + \frac{a_0^2}{y^{1/n} - a_0^2} 
    \]
    is increasing in $n$, for each $y > 1$. But this is easy to see from the fact that $n \mapsto y^{1/n}$ is decreasing.
    
    These considerations imply that we can bound all the $f_{n}(r) \geq 0$ between $r_{1}$ and $a_{0}^2$, i.e.
    \begin{align*}
        \int_{0}^{s^*}\left( v^{*}(s) -u^{*}(s) \right) \, \mathrm{d}s &\geq \int_{\tau_{1}}^{T} \left( \nu(t)-\omega(t) \right) \, \mathrm{d}t\\
        &\geq \frac{\alpha+1}{\alpha+2} \int_{r_{1}}^{a_{0}^2} f_{1}(r) \, \mathrm{d}r = \frac{\alpha+1}{\alpha+2} \int_{a_{0}^2 \frac{C/c_{0}^3}{1+C/c_{0}^3}}^{a_{0}^2} \frac{\delta_{0}^2}{a_{0}^2} \left[ r-(a_{0}^2-r) \frac{C}{c_{0}^3} \right]  \, \mathrm{d}r \\
        &\geq \frac{\alpha+1}{\alpha+2} \delta_{0}^2 a_{0}^2 \int_{\frac{C/c_{0}^3}{1+C/c_{0}^3}}^{1} \left( \left( 1+\frac{C}{c_{0}^3} \right) u - \frac{C}{c_{0}^3} \right)\, \mathrm{d}u \geq \frac{1}{2} \cdot \frac{\alpha+1}{\alpha+2} \cdot \delta_{0}^2 a_{0}^2 \cdot \frac{1}{1+C/c_{0}^3} .
    \end{align*}
    Since $a_{0}$ can be bounded from below in terms of $c_{0}$ (given our assumption that $\delta_{0}<c_{0}/16$) and $c_{0}$ can be bounded from below by $\tilde{c}_{0}$, we conclude that the right hand side behaves like
    \[ K \frac{\alpha+1}{\alpha+2} \delta_{0}^2 ,\]
    where $K$ is an absolute, computable constant.
\end{proof}

At this point we are finally ready to show the stability result for the function in terms of the closeness of $a_0$ to $1$. Coupled with Lemma~\ref{lemma:RKHSQuantitative} below, we will conclude the proof of \eqref{eqn:BergmanStabilityFunction}.

\begin{numlemma}
    There is $M_{\alpha}(s)$ such that for every $f \in \Berg$ and every $\Omega \subset \disk$ with $s = \mu(\Om)$,
    \begin{equation}\label{eqn:BergmanFunctionStabilityBeforeRKHS}
        1-a_0^2 \leq M_{\alpha}(s) \delta(f;\Om,\alpha) .
    \end{equation}
\end{numlemma}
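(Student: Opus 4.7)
The starting point is the Hardy--Littlewood rearrangement inequality applied to $u(z)=|f(z)|^2(1-|z|^2)^{\alpha+2}$: after normalizing $\|f\|_{\Berg}=1$ and placing the maximum of $u$ at the origin via a M\"obius transformation, so that $u(0)=T=\frac{\alpha+1}{\pi}a_0^2$, one obtains
\[
\theta_\alpha(s)\,\delta(f;\Omega,\alpha) \;=\; \theta_\alpha(s) - \int_\Omega u\,d\mu \;\ge\; \int_0^s (v^*(\tau)-u^*(\tau))\,d\tau.
\]
The remaining task is to lower-bound $\int_0^s(v^*-u^*)\,d\tau$ by a multiple of $(1-a_0^2)$ times an explicit function of $s$ and $\alpha$.

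The analysis splits according to the size of $a_0^2$. If $a_0^2 < C_0$ (the threshold of Lemma \ref{lemma:BergmanSuperLevelSetsEstimate}), then $1-a_0^2 \ge 1-C_0>0$, and a qualitative stability argument---based on the invariance of the problem under the group of pseudohyperbolic isometries together with compactness of normalized near-extremizing sequences---yields a positive lower bound on $\delta(f;\Omega,\alpha)$ depending only on $s$ and $\alpha$. This handles the coarse regime with a possibly large $M_\alpha(s)$.

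In the main regime $a_0^2 \ge C_0$, Lemma \ref{lemma:BergmanLowerBoundDifference} supplies $\int_0^{s^*}(v^*-u^*)\,d\tau \ge C(\alpha+1)(1-a_0^2)/(\alpha+2)$, and we transfer this bound from $[0,s^*]$ to the prescribed interval $[0,s]$. When $s \le s^*$, the integrand is nonnegative on $[0,s]$; combining the trivial bound $u^*(\tau)\le T$ with the Taylor expansion of $\theta_\alpha(s)-Ts$ around $s=0$ gives $\int_0^s(v^*-u^*)\,d\tau \ge \theta_\alpha(s)-Ts$, whose leading order $(\alpha+1)(1-a_0^2)s/\pi$ dominates the right-hand side of \eqref{eqn:BergmanFunctionStabilityBeforeRKHS} with the desired constants. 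When $s>s^*$, we split $\int_0^s = \int_0^{s^*} + \int_{s^*}^s$ and use the monotonicity $u^*(\tau)\le v^*(s^*)=t^*$ for $\tau\ge s^*$, together with the explicit bound $t^*\le \frac{\alpha+1}{\pi}c$ from the proof of Lemma \ref{lemma:BergmanLowerBoundDifference}, to control the potentially negative tail $\int_{s^*}^s(v^*-u^*)\,d\tau$ by a multiple of $(\alpha+1)(s-s^*)/\pi$, which can be absorbed into a modified constant $M_\alpha(s)$.

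\textbf{Main obstacle.} The most delicate step is calibrating the transition between the sub-cases so that the resulting $M_\alpha(s)$ has the correct asymptotic behavior both as $\alpha\to -1$ (where it must remain finite, corresponding to the Hardy-space endpoint of Theorem \ref{thm:HardyStability}) and as $s\to\infty$. In particular, the tail correction for $s>s^*$ is only directly effective when $s-s^*$ is of order $(1-a_0^2)/(\alpha+2)$; for larger $s$ one must combine the estimate of Lemma \ref{lemma:BergmanLowerBoundDifference} with the coarse qualitative bound on $\delta$ so as to cover the full range of $s$ uniformly, while preserving the sharp asymptotic rates in $\alpha$.
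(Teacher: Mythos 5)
Your proposal contains the correct starting point --- normalizing, using the bathtub principle to reduce to $\int_0^s(v^*-u^*)\,d\tau \le \theta_\alpha(s)\,\delta$, and invoking Lemma~\ref{lemma:BergmanLowerBoundDifference} as the main input --- but the mechanism for transferring the lower bound from $[0,s^*]$ to $[0,s]$ does not close, and the key tool the paper uses for exactly this step is absent from your plan.

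The crucial ingredient you are missing is the \emph{monotonicity of the ratio} $r(s)=u^*(s)/v^*(s)$, which is an equivalent reformulation of the Ramos--Tilli theorem. Your plan for $s\le s^*$ uses only the trivial bound $u^*\le T$, giving $\int_0^s(v^*-u^*)\ge\theta_\alpha(s)-Ts$; but $\theta_\alpha(s)-Ts$ is negative as soon as $s$ exceeds roughly $(1-a_0^2)/(\alpha+2)$ (e.g.\ $\alpha=0$, $a_0^2=0.99$, $s=\pi$ gives $\theta_0(\pi)-T\pi\approx-0.49$), so this produces no usable lower bound once $s$ is bounded away from zero. For $s>s^*$ your tail estimate $|\int_{s^*}^s(v^*-u^*)|\lesssim(\alpha+1)(s-s^*)/\pi$ is again only effective if $s-s^*$ is of the order of $1-a_0^2$, which you concede in your ``main obstacle'' paragraph; but this is exactly the case that must be handled, and you offer no argument. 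The paper resolves both issues at once by writing, for $s_0>s^*$,
\begin{equation*}
\int_{s_0}^{\infty}u^*\Big(1-\tfrac1{r}\Big)\,d\tau\ge\Big(1-\tfrac1{r(s_0)}\Big)\int_{s_0}^{\infty}u^*\,d\tau,\qquad
\int_{s^*}^{s_0}u^*\Big(1-\tfrac1{r}\Big)\,d\tau\le\Big(1-\tfrac1{r(s_0)}\Big)\int_{s^*}^{s_0}u^*\,d\tau,
\end{equation*}
whence $\int_{s^*}^{s_0}(u^*-v^*)\le\eps\cdot\int_{s^*}^{s_0}u^*\big/\int_{s_0}^\infty u^*$; this controls an arbitrarily large negative tail by the deficit with the explicit constant $(1+s_0/\pi)^{\alpha+1}-1$. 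The analogous argument with $r(s_0)<1$ handles $s_0<s^*$. Without this ratio-monotonicity argument, there is no way to avoid the restriction to $s$ near $0$ or $s$ near $s^*$.

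Two further points. First, the ``qualitative compactness'' argument you invoke for $a_0^2<C_0$ is not part of the paper's approach, would not yield an explicit $M_\alpha(s)$, and in any case is far from obviously valid here: the problem carries a non-compact M\"obius symmetry, so any compactness argument needs a nontrivial concentration--compactness step, and it would give no control on the limits $\alpha\to-1$ or $\alpha\to\infty$ that the manuscript cares about. Second, your outline simply labels the control of $s^*$ as an ``obstacle'' without resolving it; in the paper this is Step~II, and it is dealt with by replacing $u^*$ with the explicit majorant $w(s)$ from Lemma~\ref{lemma:BergmanSuperLevelSetsEstimate}, bounding $s^*$ above by $\tilde{s}$ (uniformly in $\alpha$ near $-1$), and then exploiting a convexity property of $y(s)=v^*(s)-w(s)$ to split into the subcases $s_0\gtrless s'$. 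That step is essential precisely because \eqref{eqn:InProofFSEstimate2} blows up as $\alpha\to-1$ without a uniform bound on $s^*$. In summary: the skeleton of the argument is right, but the central monotonicity estimate is missing, the small-$a_0$ branch is not a proof, and the endpoint $\alpha\to-1$ is unaddressed.
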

\begin{proof}
    We will denote $\mu(\Omega) = s_0$ and split the proof in three steps. In the first one, we use Lemma~\ref{lemma:BergmanLowerBoundDifference} to treat the case $s_0 > s^*$. In the second step, we treat the remaining cases, while in the last step, we give the function $M_{\alpha}(s)$ explicitly.

    \vspace{2mm}
    
    \noindent\underline{\textsc{Step I}.} To begin with, we let $\dso = \delta(f; \inset{u > u^*(s_0)}, \alpha) \leq \delta(f; \Om, \alpha)$, and we define
    \begin{equation}\label{eqn:InProofFEDefnEps}
        \eps \coloneqq \dso \int_0^{s_0} v^*(s) \diff s = \int_{0}^{s_0} \left( v^*(s) - u^*(s) \right) \diff s ,
    \end{equation}
    and the ratio
    \begin{equation}
        r(s) \coloneqq \frac{u^*(s)}{v^*(s)},
    \end{equation}
    which is increasing on $[0,\infty)$ - this is an equivalent formulation of the main result from \cite{RamosTilli}. 

    \vspace{2mm}

    \noindent\textit{\underline{Case $s_0 > s^*$}}. Notice that $r(s^*) = 1$, and from \eqref{eqn:InProofFEDefnEps},
    \[ \eps = \int_{s_0}^{\infty} u^*(s) \of{1- \inv{r(s)}} \diff s \geq \of{1- \inv{r(s_0)}} \int_{s_0}^{\infty} u^*(s) \diff s .\]
    For the same reason,
    \[ \int_{s^*}^{s_0} \left( u^*(s) - v^*(s) \right) \diff s = \int_{s^*}^{s_0} u^*(s) \of{1-\inv{r(s)}} \diff s \leq \of{1-\inv{r(s_0)}} \int_{s^*}^{s_0} u^*(s) \diff s .\]
    Combining these two estimates yields
    \[ \int_{s^*}^{s_0} \left( u^*(s) - v^*(s) \right) \diff s \leq \eps \frac{\int_{s^*}^{s_0} u^*(s) \diff s}{\int_{s_0}^{\infty} u^*(s) \diff s} ,\]
    and recalling \eqref{eqn:InProofFEDefnEps},
    \begin{align*}
        \int_0^{s^*} \left( v^*(s)-u^*(s) \right) \diff s &\leq \eps + \eps \frac{\int_{s^*}^{s_0} u^*(s) \diff s}{\int_{s_0}^{\infty} u^*(s) \diff s} = \eps \frac{\int_{s^*}^{\infty} u^*(s) \diff s}{\int_{s_0}^{\infty} u^*(s) \diff s} \\
        &\leq \frac{\eps}{\int_{s_0}^{\infty} v^*(s) \diff s} 
        = \frac{\int_{0}^{s_{0}} v^*(s)  \diff s }{\left( 1+\frac{s_{0}}{\pi} \right)^{-(\alpha+1)}}\delta_{s_{0}} = \left[ \left( 1+\frac{s_{0}}{\pi} \right)^{\alpha+1} -1 \right] \delta_{s_{0}} .
    \end{align*}
    Invoking Lemma~\ref{lemma:BergmanLowerBoundDifference}, we conclude that
    \begin{equation}\label{eqn:InProofFSEstimate1}
        1-a_0^2 \leq C_1 \frac{\alpha+2}{\alpha+1} \left[ \left( 1+\frac{s_{0}}{\pi} \right)^{\alpha+1} -1 \right] \delta(f;\Om,\alpha) , \quad s_0 > s^* .
    \end{equation}

    \vspace{2mm}
    
    \noindent\textit{\underline{Case $s_0 < s^*$}}. Again from \eqref{eqn:InProofFEDefnEps}, we find
    \[ \eps = \int_{0}^{s_0} v^*(s) (1-r(s)) \diff s \geq (1-r(s_0)) \int_0^{s_0} v^*(s) \diff s . \]
    Arguing similarly,
    \[ \int_{s_0}^{s^*} \left( v^*(s) - u^*(s) \right) \diff s = \int_{s_0}^{s^*} v^*(s) (1-r(s)) \diff s \leq (1-r(s_0)) \int_{s_0}^{s^*} v^*(s) \diff s ,\]
    and combining the two,
    \[ \int_{s_0}^{s^*} \left( v^*(s) - u^*(s) \right)  \diff s \leq \eps \frac{\int_{s_0}^{s^*} v^*(s) \diff s}{\int_{0}^{s_0} v^*(s) \diff s} . \]
    This, in turn, yields
    \begin{equation*}
        \int_0^{s^*} \left( v^*(s) - u^*(s) \right) \diff s \leq \eps + \eps \frac{\int_{s_0}^{s^*} v^*(s) \diff s}{\int_{0}^{s_0} v^*(s) \diff s} = \eps \frac{\int_{0}^{s^*} v^*(s) \diff s}{\int_{0}^{s_0} v^*(s) \diff s} = \left[ 1- \left(1+\frac{s^*}{\pi}\right)^{-(\alpha+1)} \right] \dso .
    \end{equation*}
    Using Lemma~\ref{lemma:BergmanLowerBoundDifference} again,
    \begin{equation}\label{eqn:InProofFSEstimate2}
        1-a_0^2 \leq C_2 \frac{\alpha+2}{\alpha+1} \left[ 1- \left(1+\frac{s^*}{\pi}\right)^{-(\alpha+1)} \right] \delta(f;\Om,\alpha) , \quad s_0 < s^* .
    \end{equation}

    \vspace{2mm}
    
    \noindent\underline{\textsc{Step II}.} Although the previous step already shows the stability estimate for $\alpha$ away from $-1$, the \emph{key problem} we face now is that \eqref{eqn:InProofFSEstimate2} is not sharp as $\alpha \to -1$. Indeed, we would need to make sure that $s^*$ behaves like a constant that is independent of $\alpha$. We may achieve a lower bound on that value coming from the upper bound on $t^*$ in the proof of Lemma~\ref{lemma:BergmanSuperLevelSetsEstimate}, but we lack an upper bound. To circumvent this issue, we exploit Lemma~\ref{lemma:BergmanSuperLevelSetsEstimate} again in order to compare $v^*$ and $u^*$ when $s_0 < s^*$.
    
    Effectively: we may rephrase \eqref{eqn:BergmanSuperLevelSetsEstimate} as
    \begin{equation*}
        u^*(s) \leq \frac{\alpha+1}{\pi}a_0^2 \left( 1+ \left( \frac{a_0^2}{1+K_0 \delta_0^2} \right) \frac{s}{\pi} \right)^{-(\alpha+2)} = w(s),
    \end{equation*}
    which holds for all $s$ such that $w(s)>t_0$, and we define $\Tilde{s}$ as the unique solution to $w(\Tilde{s})=v^*(\Tilde{s})$, which reads
    \begin{equation}\label{eqn:def-tilde-s}
    \Tilde{s} = \pi (1+K_0 (1-a_0^2)) \frac{1-(a_0^2)^{-\frac{1}{\alpha+2}}}{(a_0^2)^{\frac{\alpha+1}{\alpha+2}} - 1-K_0 (1-a_0^2)} ,
    \end{equation}
    and is furthermore bounded from above by some constant $\tilde{s} \leq C$. To check this, notice first that the maximum of the right-hand side of \eqref{eqn:def-tilde-s} is achieved at $\alpha=-1$, which follows by simply noting that the derivative of the function $t \mapsto \frac{1-t^{-\frac{1}{\alpha+2}}}{t^{\frac{\alpha+1}{\alpha+2}} - 1 - K_0(1-t)}$ is given by 
    \[
    \frac{(1 + K_0) (1 - t) t^{-\frac{1}{2 + \alpha}}
    \log(t)}{(2 + \alpha)^2 \left(-1 + K_0 (-1 + t) + t^{\frac{1 + \alpha}{2 + \alpha}}\right)^2},
    \]
    which is clearly non-positive whenever $t<1$. The expression at $\alpha = -1$ evaluates explicitly to 
    \[
    \pi (1 + K_0(1-a_0^2)) \frac{1}{K_0 a_0^2},
    \]
    and hence since $a_0^2$ is supposed to be bounded away from zero as in Lemma~\ref{lemma:BergmanSuperLevelSetsEstimate}, we conclude that $\tilde{s} \leq C$, for some universal constant $C$, which is independent of $\alpha$, as desired. We now distinguish the cases $s^* \leq \tilde{s}$ and $s^* \geq \tilde{s}$.

    \vspace{2mm}

    \noindent\textit{\underline{Case 1: $s^* \leq \tilde{s}$}}. In this situation, we have the desired (uniform on $\alpha$) upper bound on $s^* \leq \tilde{s} \leq C$ that makes \eqref{eqn:InProofFSEstimate2} sharp as $\alpha \to -1$, and we conclude, since in that case we have 
\begin{equation}\label{eqn:first-bound-s*-uniform}
C_2 \frac{\alpha+2}{\alpha+1} \left[ 1- \left(1+\frac{s^*}{\pi}\right)^{-(\alpha+1)} \right] \le C'_2,
\end{equation} 
for some $C'_2$ absolute constant. 
    \vspace{2mm}

    \noindent\textit{\underline{Case 2: $s^* \geq \tilde{s}$}}. We need to distinguish cases again in terms of $s_0$. We first claim that
    \begin{equation*}
        y(s) = v^*(s)-w(s) = \frac{\alpha+1}{\pi} \left[ \left(1+\frac{s}{\pi}\right)^{-(\alpha+2)} - a_0^2 \left(1+\left(\frac{a_0^2}{1+K\delta_0^2}\right)\frac{s}{\pi}\right)^{-(\alpha+2)} \right] .
    \end{equation*}
    is globally bounded from below by
    \[ \Tilde{y}(s)=y(0)+y'(0)s = \frac{\alpha+1}{\pi} \left[ (1-a_0^2) + \frac{\alpha+2}{\pi} \left( \frac{a_0^4}{1+K_0 \delta_0^2} -1 \right)s \right] .\] 
    The basic idea in order to prove this is to use \emph{concavity properties} of $y$ to our advantage. As one can see from a direct inspection, $y$ is concave in a neighborhood of the origin, so we only need to check from the point where it stops being concave onward; the idea is that $y$ tends to $0$ as $s \to \infty$ and that, by the time it stops being concave, $\tilde{y}$ is already too negative. In effective terms, we want to show that 
    \begin{equation}\label{eqn:y-bound} 
    y(x) \geq y(0)+y'(0)x \Leftrightarrow \frac{\int_0^x y'(t) \diff t}{x} \geq y'(0) ,
    \end{equation} 
    so it suffices to show that $y'$ attains its global minimum at $s=0$. In order to prove this, we first note that the second derivative of $y$ is simply 
    \[
    y''(s) = \frac{(3+\alpha)(2+\alpha)(1+\alpha)}{\pi^3}\left( \left(1 + \frac{s}{\pi}\right)^{-4 -\alpha} - 
   \frac{a_0^6}{(1+K\delta_0^2)^2} \left( 1 + \left( \frac{a_0^2}{1+K \delta_0^2}\right) \frac{s}{\pi}\right)^{-4 - \alpha}\right), 
    \]
    which can vanish at most at one positive value, which we denote by $s_m.$ From that point on $y'$ is non-increasing. Let $\tilde{s}_m$ denote further the point where $x \mapsto \frac{1}{x} \int_0^x y'(t) \, \diff t$ attains its maximum. Clearly, from the definitions we have that $\tilde{s}_m$ is unique (and hence well-defined), and we have $\tilde{s}_m \ge s_m.$ We further have $\lim_{s \to \infty} y'(s) = 0$, yielding, for $x > \tilde{s}_m$,  
    \begin{align*} 
    \frac{1}{x} \int_0^x y'(t) \, \diff t &\ge \frac{1}{x} \left( \int_0^{\tilde{s}_m} y'(t) \, \diff t + \int_{\tilde{s}_m}^x y'(t) \, \diff t \right) \cr 
        & \ge \frac{\tilde{s}_m}{x} \cdot y'(0) \ge y'(0),
    \end{align*} 
    since $y'(0) < 0$. For $x < \tilde{s}_m$, we claim that $x \mapsto \frac{1}{x} \int_0^x y'(t) \, \diff t$ is \emph{increasing} on the interval $(0,\tilde{s}_m)$. By differentiating that function, this amounts to showing that $y'(x) > \frac{1}{x} \int_0^x y'(t) \, \diff t$ on that interval, which can be checked with a case analysis: if $x < s_m$, this follows from the fact that $y'$ is itself increasing on $(0,s_m)$. For $s_m \le x < \tilde{s}_m$, suppose this does not happen, and hence we may find $x_0 \in [s_m,\tilde{s}_m)$ such that $y'(x_0) = \frac{1}{x_0} \int_0^{x_0} y'(t) \, \diff t$. But the monotonocity of $y'$ in $(s_m,+\infty)$ implies
    \[
    \int_0^x y'(t) \, \diff t = \int_0^{x_0} y'(t) \, \diff t + \int_{x_0}^x y'(t) \, \diff t < x_0 \cdot y'(x_0) + (x-x_0) y'(x_0) = x \cdot y'(x_0),
    \]
    and hence $x_0$ is a global maximum point for $\frac{1}{x} \int_0^x y'(t) \, \diff t$, a contradiction to the uniqueness of $\tilde{s}_m$. We hence conclude the proof of \eqref{eqn:y-bound}.
    
    The solution $s'$ to $\Tilde{y}(s') = 0$ is then
    \[ s' = \frac{\pi}{\alpha+2} \cdot \frac{(1+K_0 \delta_0^2)(1-a_0^2)}{1+K_0 \delta_0^2 - a_0^4} \geq \frac{\pi}{\alpha+2} \cdot \frac{1}{K_0+2} ,\]
    and we distinguish two further subcases.

    \vspace{2mm}

    \noindent\textit{\underline{Case 2.1: $s_0 > s'$}}. We can estimate
    \[ C\frac{\alpha+1}{\alpha+2} (1-a_0^2) \leq \int_0^{s'} \tilde{y}(s) \diff s \leq \int_0^{s'} \left( v^*(s)-w(s) \right) \diff s \leq \int_0^{s_0} \left( v^*(s)-u^*(s) \right) \diff s ,\]
    reaching
    \begin{equation}\label{eqn:InProofFSEstimate3}
        1-a_0^2 \leq C_3 \frac{\alpha+2}{\alpha+1} \left[ 1- \of{1+\frac{s_0}{\pi}}^{-(\alpha+1)} \right] \delta(f;\Om, \alpha) , \quad s' < s_0 < s^* .
    \end{equation}

    \vspace{2mm}

    \noindent\textit{\underline{Case 2.2: $s_0 < s'$}}. Finally, in this situation we can easily estimate
    \[ C \frac{\alpha+1}{\pi} (1-a_0^2) s_0 \leq \int_0^{s_0} \tilde{y}(s) \diff s \int_0^{s_0} \left( v^*(s)-w(s) \right) \diff s \leq \int_0^{s_0} \left( v^*(s)-u^*(s) \right) \diff s = \theta_{\alpha}(s_0) \dso , \]
    which yields
    \begin{equation}\label{eqn:InProofFSEstimate4}
        1-a_0^2 \leq \frac{C_4}{\alpha+1} \left[ \frac{ 1- \of{1+\frac{s_0}{\pi}}^{-(\alpha+1)}}{s_0} \right] \delta(f;\Om, \alpha) , \quad s_0 < s' < s^* .
    \end{equation}

    \vspace{2mm}
    
    \noindent\underline{\textsc{Step III}.} Estimates \eqref{eqn:InProofFSEstimate1}, \eqref{eqn:InProofFSEstimate2}, \eqref{eqn:InProofFSEstimate3} and  \eqref{eqn:InProofFSEstimate4} are sharp as $\alpha \to -1$, but we wish to combine them into a more compact expression. To this end, notice that the right-hand side of \eqref{eqn:InProofFSEstimate3} can be bounded by that of \eqref{eqn:InProofFSEstimate1} up to choosing the constant $C_{13} = \max \inset{C_1,C_3}$. In other words, by the mean value theorem, we have that there is an absolute constant $C'_{4}$ such that 
    \[ \frac{C_4}{\alpha+1} \left[ \frac{ 1- \of{1+\frac{s_0}{\pi}}^{-(\alpha+1)}}{s_0} \right]  \leq C'_{4} .\]
     Therefore, we may choose $C \geq \max \inset{C_{13},C'_{2},C'_4}$ and define
    \begin{equation}\label{eqn:defnOfM} 
        M_{\alpha}(s) \coloneqq C \of{1 + \frac{\alpha+2}{\alpha+1} \left[ \of{1+\frac{s}{\pi}}^{\alpha+1} -1\right]} \geq \max\inset{C'_2,C'_4 , C_{13} \frac{\alpha+2}{\alpha+1} \left[ \of{1+\frac{s}{\pi}}^{\alpha+1} -1\right] }  ,
    \end{equation}
    concluding the proof of \eqref{eqn:BergmanFunctionStabilityBeforeRKHS}.
\end{proof}

To relate the left-hand sides of \eqref{eqn:BergmanFunctionStabilityBeforeRKHS} and \eqref{eqn:BergmanStabilityFunction}, we include the following well-known result from the theory of reproducing kernel Hilbert spaces -- see \cite{KNOT} for instance. For $f \in \mcalH$, a reproducing kernel Hilbert space of complex-valued functions with kernel $K$ and norm $\norm{\cdot}$, define
\[ \Delta_f(x) = \frac{\norm{f}^2 - \abs{f(x)}^2 K(x,x)^{-1}}{\norm{f}^2} ,\]
and recall that
\begin{equation}\label{eqn:RKHSBound}
    \abs{f(x)}^2 K(x,x)^{-1} \leq \norm{f}^2 .
\end{equation}

\begin{numlemma}\label{lemma:RKHSQuantitative}
    Let $(\mcalH,\norm{\cdot})$ be a reproducing kernel Hilbert space of complex-valued functions on $X$ with kernel $K$, and let $f \in \mcalH$. Then for every $x \in X$ there exists a constant $c \in \C$ with $\abs{c}=\norm{f}$ such that
    \begin{equation}\label{eqn:QuantitativeRKHS}
        \frac{\norm{f-cf_x}}{\norm{f}} \leq \sqrt{2} \Delta_f(x)^{1/2} ,
    \end{equation}
    where $f_x = K(x,x)^{-1/2} K_x$ is a multiple of the reproducing kernel.
\end{numlemma}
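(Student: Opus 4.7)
The plan is to proceed via a direct computation once we observe that $f_x$ is of unit norm. Indeed, by the reproducing property we have $\|K_x\|^2 = \langle K_x, K_x \rangle = K(x,x)$, so $\|f_x\|^2 = K(x,x)^{-1} \langle K_x, K_x \rangle = 1$. Similarly, the identity $\langle f, K_x \rangle = f(x)$ yields
\[ \langle f, f_x \rangle = K(x,x)^{-1/2} f(x). \]
Expanding the squared norm, we then have, for any $c \in \C$ with $|c| = \|f\|$,
\[ \|f - c f_x\|^2 = \|f\|^2 + |c|^2 \|f_x\|^2 - 2\Re(\bar c \langle f, f_x \rangle) = 2\|f\|^2 - 2\Re(\bar c \langle f, f_x \rangle). \]

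The natural next step is to choose $c$ so that $\bar c \langle f, f_x \rangle$ is real and non-negative; explicitly, if $f(x) \neq 0$ we take $c = \|f\| \cdot f(x)/|f(x)|$ (and any $c$ with $|c|=\|f\|$ otherwise). With this choice, $\Re(\bar c \langle f, f_x \rangle) = \|f\| \cdot |f(x)| K(x,x)^{-1/2}$, and therefore
\[ \|f - c f_x\|^2 = 2\|f\|^2 - 2 \|f\| \cdot |f(x)| K(x,x)^{-1/2}. \]
Abbreviating $A = \|f\|^2$ and $B = |f(x)|^2 K(x,x)^{-1}$, this reads $\|f-c f_x\|^2 = 2A - 2\sqrt{AB}$, while the target upper bound is $2\|f\|^2 \Delta_f(x) = 2(A-B)$.

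The final step is therefore to verify the elementary inequality $2A - 2\sqrt{AB} \le 2(A-B)$, which upon rearrangement is equivalent to $B \le \sqrt{AB}$, i.e.\ $\sqrt{B} \le \sqrt{A}$. But this is exactly the reproducing kernel bound \eqref{eqn:RKHSBound}, which asserts precisely that $|f(x)|^2 K(x,x)^{-1} \le \|f\|^2$. Dividing through by $\|f\|^2$ and taking square roots yields \eqref{eqn:QuantitativeRKHS}. There is no real obstacle here: the proof is purely algebraic once the phase of $c$ is chosen correctly, and the only non-trivial ingredient is the RKHS pointwise bound already quoted in the statement.
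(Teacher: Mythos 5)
Your proof is correct and follows essentially the same route as the paper: the same choice of phase for $c$, the same expansion of $\|f-cf_x\|^2$ using $\|f_x\|=1$, and the same reliance on the reproducing-kernel bound $|f(x)|^2K(x,x)^{-1}\le\|f\|^2$ to close the estimate. The only cosmetic difference is that you isolate the final step as the elementary inequality $2A-2\sqrt{AB}\le 2(A-B)$ rather than writing $\Re\langle f,cf_x\rangle\ge\|f\|^2(1-\Delta_f(x))$ directly, and you dispatch the case $f(x)=0$ in a parenthetical remark where the paper does so explicitly; both are fine.
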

\begin{proof}
    Fix $x \in X$ and consider the following normalization of $K_x$,
    \begin{equation}\label{eqn:RKHSNormalizedKernel}
        f_x (y) \coloneqq  K(x,x)^{-1/2}K_x(y) .
    \end{equation}
    Given $f \in \mcalH$, assume that $f(x) \neq 0$ and choose $c = \norm{f} \frac{f(x)}{\abs{f(x)}}$. With this choice, and thanks to \eqref{eqn:RKHSBound},
    \begin{align*}
        \R \ni \inner{f,cf_{x}} &= \conj{c} K(x,x)^{-1/2} f(x) = \norm{f}K(x,x)^{-1/2}\abs{f(x)} \\
        &= \frac{\abs{f(x)}^2 K(x,x)^{-1}}{\abs{f(x)}K(x,x)^{-1/2}} \norm{f} \geq \norm{f}^2 (1-\Delta_f (x) ) .
    \end{align*}
    When we now look at the distance in the Hilbert space norm between $f$ and our candidate, $c f_x$, we find that
    \begin{align*}
        \norm{f-cf_{x}}^2 &= \norm{f}^2 + \abs{c}^2 \norm{f_{x}}^2 - 2 \tRe \inner{f, c f_x}\\
        &\leq 2\norm{f}^2 - 2 \norm{f}^2(1-\Delta_f(x)) = 2\norm{f}^2 \Delta_f(x) .
    \end{align*}
    Assume now that $f(x)=0$. Then $\Delta_f(x)=1$ and if $c=\norm{f}$, the computation above shows that $\norm{f-cf_x}^2 =2 \norm{f}^2$. One obtains \eqref{eqn:QuantitativeRKHS} by rearranging the terms.
\end{proof}

We are finally ready to prove \eqref{eqn:BergmanStabilityFunction}.

\begin{proof}[Proof of \eqref{eqn:BergmanStabilityFunction}]
    Let $f \in \Berg$ be such that $\norm{f}_{\Berg}=1$. Lemma~\ref{lemma:RKHSQuantitative} reads
    \begin{equation*}
        \inf_{\substack{\abs{c} = \norm{f}_{\Berg} ,\\ \om \in \disk}} \norm{f-c\ff_{\om}}_{\mathbf{B}_{\alpha}^2}^2 \leq 1-a_0^2 .
    \end{equation*}
    Putting this together with \eqref{eqn:BergmanFunctionStabilityBeforeRKHS} yields the desired \eqref{eqn:BergmanStabilityFunction}.
\end{proof}

\subsection{Set Stability}\label{subsec:SetStability}

We will now address the proof of \eqref{eqn:BergmanStabilitySet}. To this end, we begin by introducing a few normalizations, different from the ones used in the proof of \eqref{eqn:BergmanStabilityFunction}.

First, for $f \in \Berg$ such that $\norm{f}_{\Berg} = 1$, we explicitly compute:
\begin{align*}
\lVert f-c\ff_{z_{0}} \rVert_{\mathbf{B}_{\alpha}^2}^2 &= 2  \left( 1-\mathrm{Re}\left( \overline{c} f(z_{0})\sqrt{ \frac{\pi}{\alpha+1} } (1-\lvert z \rvert^2)^{\frac{\alpha+2}{2}} \right) \right), 
\end{align*}
and minimizing the above over ${c}$ such that $|c| = \|f\|_{\Berg}$ and $z_{0} \in \disk$ we find that
\begin{equation}\label{eqn:InSetProofM}
    \min_{z_{0} \in \mathbb{D}, |c|=\lVert f \rVert_{\mathbf{B}_{\alpha}^2}} \frac{\lVert f-c\ff_{z_{0}} \rVert_{\mathbf{B}_{\alpha}^2}^2}{\lVert f \rVert_{\mathbf{B}_{\alpha}^2}^2} = 2  \left( 1- \sqrt{ \frac{\pi}{\alpha+1} \max_{z \in \mathbb{D}} u(z) } \right) .
\end{equation}
We now further assume, without loss of generality, that
$$\min_{z_0 \in \disk, c \in \C} \frac{ \|f - c \cdot \ff_{z_0}\|_{\Berg}}{\|f\|_{\Berg}} =  \min_{c\in \C} \frac{\| f - c \|_{\Berg}}{\|f\|_{\Berg}}.
$$
Observe that the minimum from before is equivalent to the distance between $f$ and the set of extremals: indeed, we have
\begin{align}
    \label{eq:equivalentqtties}
   \min_{z_0 \in \disk, c \in \C} \frac{ \|f - c \cdot \ff_{z_0}\|_{\Berg}^2}{\|f\|_{\Berg}^2} & \leq \min_{z_0 \in \disk, |c|=\|f\|_{\Berg}} \frac{ \|f - c \cdot \ff_{z_0}\|_{\Berg}}{\|f\|_{\Berg}} \cr 
   & = \min_{|c|=\|f\|_{\Berg}} \frac{ \|f - c \|_{\Berg}}{\|f\|_{\Berg}} \leq \sqrt{2} \min_{z_0 \in \disk, c \in \C} \frac{ \|f - c \cdot \ff_{z_0}\|_{\Berg}^2}{\|f\|_{\Berg}^2},
\end{align}
by Lemma \ref{lemma:RKHSQuantitative}. We will now assume also that $u$ achieves its maximum at $z=0$, and we will consider it as given by a perturbation
\[ f(z) = \sqrt{ \frac{\alpha+1}{\pi} } + \varepsilon g(z) , \quad f(0) = \sqrt{ \frac{\alpha+1}{\pi} } ,\]
satisfying $\lVert g \rVert_{\mathbf{B}_{\alpha}^2} = 1$. We will sometimes denote such $\e = \e(f).$ Note that, according to this normalization and the fact that $u$ achieves its maximum at $0$, we have $f'(0) = 0$, which implies
\begin{equation}
    \label{eq:normG}
    \quad \langle g, 1\rangle_{\Berg} = \langle g, z\rangle_{\Berg} = 0.
\end{equation}
Additionally, \eqref{eqn:InSetProofM} now shows that
\[ \min_{\lvert c \rvert=\lVert f \rVert_{\mathbf{B}_{\alpha}^2}} \frac{\left\lVert  f-c\sqrt{ \frac{\alpha+1}{\pi} }  \right\rVert_{\mathbf{B}_{\alpha}^2}^2}{\lVert f \rVert_{\mathbf{B}_{\alpha}^2}^2} = 2\left( 1-\sqrt{ \frac{\pi}{\alpha+1}} \frac{\lvert f(0) \rvert}{\lVert f \rVert_{\mathbf{B}_{\alpha}^2}}  \right) = 2 \frac{\lVert f \rVert_{\mathbf{B}_{\alpha}^2} - 1}{\lVert f \rVert_{\mathbf{B}_{\alpha}^2}} .\]
Recalling \eqref{eqn:defnOfM}, the upper bound in \eqref{eqn:BergmanStabilityFunction} can now be used to reach
\[ 1 \leq \lVert f \rVert_{\mathbf{B}_{\alpha}^2} \leq \frac{2}{\left(2-M_{\alpha}(s) \delta(f,\alpha,\Omega) \right)^{1/2}} \leq 2 ,\]
whenever the deficit is small enough in terms of $s = \mu(\Om)$. Furthermore, recalling that
\[\varepsilon = \left\lVert  f-\sqrt{ \frac{\alpha+1}{\pi} }  \right\rVert_{\mathbf{B}_{\alpha}^2} ,\]
notice that
\[ \frac{\eps}{2} \leq \frac{\eps}{\norm{f}_{\Berg}} = \frac{\norm{f-\sqrt{\frac{\alpha+1}{\pi}}}_{\Berg}}{\norm{f}_{\Berg}} \]
and thanks to the proof of \eqref{eqn:BergmanStabilityFunction}, we may assume that this is as small as necessary, depending on $s = \mu(\Om)$ and $\alpha >-1$.

With the new normalizations at hand, we move onto discussing a few properties of the super-level sets of $u$, which will be featured throughout the proof of \eqref{eqn:BergmanStabilitySet}.
Given that the super-level sets of $u$ are unique, we can write
\[ A_{\Omega}= A_{u^*(\Omega)}.\]
We may estimate
\begin{align*}
    \abs{u-v} &= \abs{\left(\abs{f}^2-\frac{\alpha+1}{\pi}\right)} (1-\abs{z}^2)^{\alpha+2} \\
    &\leq \left(\abs{f-\sqrt{\frac{\alpha+1}{\pi}}}\right) \left( \abs{f}+\sqrt{\frac{\alpha+1}{\pi}}\right) (1-\abs{z}^2)^{\alpha+2} \leq \eps (\norm{f} + 1) \frac{\alpha+1}{\pi} \leq 3 \eps \frac{\alpha+1}{\pi} ,
\end{align*}
which we can rephrase as
\begin{equation}\label{eqn:Inclusions}
    L_{\Om} = \inset{v > u^*(s) + 3 \eps \frac{\alpha+1}{\pi}} \subset A_{\Om} = \inset{u > u^*(s)} \subset E_{\Om} = \inset{v > u^*(s)-3 \eps \frac{\alpha+1}{\pi}}.
\end{equation}
The sets $L_{\Om}$ and $E_{\Om}$ are balls of radii $r_-$ and $r_+$ respectively, where
\begin{equation}\label{eqn:InSetProofRadii}
    r_{\pm}^2 = 1- \left[ \frac{\pi}{\alpha+1} \left(u^*(s)\mp 3\eps\frac{\alpha+1}{\pi}\right) \right]^{\inv{\alpha+2}} .
\end{equation}

\begin{proof}[Proof of \eqref{eqn:BergmanStabilitySet}]
    Let $\mcalT$ be the map transporting $\mathbbm{1}_{A_{\Omega} \setminus \Omega} \mathrm{d} \mu$ into $\mathbbm{1}_{\Omega \setminus A_{\Omega}} \mathrm{d} \mu$, whose existence is granted by absolute continuity with respect to the Lebesgue measure \cite[p. 12]{FigalliGlaudo}. Define now
    \[ B = \inset{z \in A_{\Om} \colon \abs{\mcalT(z)}^2 - \abs{z}^2 > K_{\Om} \gamma} , \]
    and observe that
    \begin{equation}\label{eqn:InSetProofD1}
        \int_B u(z)-u(\mcalT(z)) \diff \mu(z) = \int_B u \diff \mu - \int_{\mcalT(B)} u \diff \mu \leq \int_{A_{\Om}} u \diff \mu - \int_{\Om} u \diff \mu = d(\Om) ,
    \end{equation}
    and
    \begin{equation}\label{eqn:InSetProofD2}
        d(\Om) \leq \norm{f}_{\mathbf{B}_{\alpha}^2}^2 \theta_{\alpha}(s) - \int_{\Om} u \diff \mu = \norm{f}_{\mathbf{B}_{\alpha}^2}^2 \theta_{\alpha}(s) \delta(f;\Om,\alpha) \leq 4 \theta_{\alpha}(s) \delta(f;\Om, \alpha) .
    \end{equation}
    We now wish to control the size of $B$ in terms of $d(\Om)$ in order to then exploit \eqref{eqn:Inclusions} when estimating the asymmetry of $A_{\Om}$. To this end, we divide the proof into three steps.

    \vspace{2mm}

    \noindent\underline{\textsc{Step I}. \textit{Control over $B$}.} Notice that thanks to \eqref{eqn:InSetProofD1} and \eqref{eqn:InSetProofD2}, in order to control the measure $\mu(B)$, we only need to find a lower bound for $u(z)-u(\mcalT(z))$,
    \begin{align*}
        u(z)-u(\mcalT(z)) \geq \frac{\alpha+1}{\pi} &(1-\abs{z}^2)^{\alpha+2} \of{1- \of{ \frac{1-\abs{\mcalT(z)}^2}{1-\abs{z}^2} }^{\alpha+2}} - \frac{\alpha+1}{\pi}\eps^2 -4 \frac{\alpha+1}{\pi} \eps ,
    \end{align*}
    and for $z \in B$, we find that $(\abs{\mcalT(z)}^2-|z|^2)/(1-|z|^2)$ is positive, and at most $1$. Using the definition of $B$ and assuming that $K_{\Om} \gamma \leq c_1 / (\alpha+2)$ for some constant $c_1 > 0$, we find
    \[ u(z)- u(\mcalT(z)) \geq \frac{\alpha+1}{\pi} (\alpha+2) (1-r_-^2)^{\alpha+1} K_{\Om}\gamma - \frac{\alpha+1}{\pi} \eps^2 - 4 \frac{\alpha+1}{\pi} \eps .\]
    On the other hand, if $K_{\Om} \gamma > c_1 / (\alpha+2)$, then 
    \begin{align*}
        u(z)- u(\mcalT(z)) &\geq \frac{c_1}{\pi} \cdot \frac{\alpha+1}{\alpha+2} (1-r_-^2)^{\alpha+1} - \frac{\alpha+1}{\pi} \eps^2 - 4 \frac{\alpha+1}{\pi} \eps .
    \end{align*}

    Letting $\gamma = \eps$ and assuming that
    \begin{equation}\label{eqn:SmallnessOfEps}
        \eps < c\left[ \frac{C(\alpha+2)}{1+C(\alpha+2)} \right] \of{1+\frac{s}{\pi}}^{-(\alpha+2)} , \quad K_{\Om} = \frac{6}{\alpha+2} (1-r_+^2)^{-(\alpha+1)} ,
    \end{equation}
    for $c>0$ a small absolute constant to be chosen later, we conclude that
    \[ u(z)-u(\mcalT(z)) \geq \frac{\alpha+1}{\pi} \gamma , \quad z \in B . \]
    We can couple this with \eqref{eqn:InSetProofD1} and \eqref{eqn:InSetProofD2} to estimate
    \begin{equation}\label{eqn:InSetProofBoundB}
        \mu(B) \leq \frac{\pi}{(\alpha+1)(\alpha+2)} \cdot \frac{d(\Om)}{\gamma} \leq \frac{4\pi}{(\alpha+1)(\alpha+2)} \cdot \frac{\theta_{\alpha}(s)\delta(f;\Om, \alpha)}{\gamma} .
    \end{equation}

    \vspace{2mm}
    
    \noindent\underline{\textsc{Step II}. \textit{Showing that $\Omega$ is close to $A_{\Omega}$}.} Note the identities
    \[ \mu(\Om) - \mu(B) = \mu(\Om) - \mu(\mcalT(B)) = \mu(\Om \setminus \mcalT(B)) = \mu(\Om) - \mu(A_{\Om} \setminus \Om) + \mu( (\Om \setminus \mcalT(B)) \setminus A_{\Om} ) ,\]
    from which we deduce that
    \begin{equation}\label{eqn:InSetProof3}
        \inv{2} \mu(\Om \Delta A_{\Om}) = \mu(A_{\Om} \setminus \Om) = \mu(B) + \mu( (\Om \setminus \mcalT(\Om)) \setminus A_{\Om} ).
    \end{equation}
    For an estimate in the right-hand side it suffices to bound the second term. We note that $\Om \setminus \mcalT(B)$ is contained in a neighborhood of $A_{\Om}$ that depends on $K_{\Om}\gamma$, and which is in turn nested between $L_{\Om}$ and $E_{\Om}$ of radii $r_{\pm}$ \eqref{eqn:InSetProofRadii}.
    
    We may therefore estimate
    \begin{align*}
        \mu((\Om \setminus \mcalT(B))\setminus A_{\Om}) &\leq \mu \left(B_{\sqrt{K_{\Om} \gamma + r_+^2}} \setminus B_{r_-}\right) = \pi \of{ \frac{r_+^2 + K_{\Om} \gamma - r_-^2}{(1-r_+^2 - K_{\Om}\gamma)(1-r_-^2)}} \\
        &\leq 4 \pi \of{1+\frac{s}{\pi}}^{2(\alpha+2)} \of{r_+^2-r_-^2 + K_{\Om} \gamma} \\
        &\leq 4 \pi \of{1+\frac{s}{\pi}}^{2(\alpha+2)} \left[ \frac{24}{\alpha+2} \of{1+\frac{s}{\pi}}^{\alpha+1} \eps + \frac{6}{\alpha+2} (1-r_+^2)^{-(\alpha+1)} \gamma \right]
    \end{align*}
    assuming that $\eps$ is small as in \eqref{eqn:SmallnessOfEps}. Indeed, we may bound
    \begin{align*}
        r_+^2 - r_-^2 &\leq \vhi_{s,\alpha}(\eps) \coloneqq \left[ \of{1+\frac{s}{\pi}}^{-(\alpha+2)} + 6\eps \right]^{\inv{\alpha+2}} - \left[ \of{1+\frac{s}{\pi}}^{-(\alpha+2)} - 6\eps \right]^{\inv{\alpha+2}} \leq \frac{24\eps}{\alpha+2} \of{1+\frac{s}{\pi}}^{\alpha+1}
    \end{align*}
    by considering
    \[ \phi_{s,\alpha}(\eps) \coloneqq \vhi_{s,\alpha}(\eps) - \frac{24\eps}{\alpha+2} \of{1+\frac{s}{\pi}}^{\alpha+1} .\]
    Notice that $\phi_{s,\alpha}(0) = 0$ and the derivative $\phi_{s,\alpha}'(\eps)$ is negative if
    \begin{equation}\label{eqn:InSetProof4}
        \left[ \of{1+\frac{s}{\pi}}^{-(\alpha+2)} +6\eps \right]^{-\frac{\alpha+1}{\alpha+2}} + \left[ \of{1+\frac{s}{\pi}}^{-(\alpha+2)} -6\eps \right]^{-\frac{\alpha+1}{\alpha+2}} < 4 \of{1+\frac{s}{\pi}}^{\alpha+1} ,
    \end{equation}
    but the function on the left-hand side is increasing for all $\eps > 0$, and checking that \eqref{eqn:InSetProof4} is satisfied for $\eps$ as in \eqref{eqn:SmallnessOfEps} equates to choosing $c >0$ small enough such that
    \[ \of{1+6c \frac{C(\alpha+2)}{1+C(\alpha+2)}}^{-\frac{\alpha+1}{\alpha+2}} + \of{1-6c \frac{C(\alpha+2)}{1+C(\alpha+2)}}^{-\frac{\alpha+1}{\alpha+2}} < 4 .\]
    For instance, letting $c \leq 1/10$ suffices.  Setting $\eps = M_{\alpha}(s)^{1/2} \delta(f;\Om, \alpha)^{1/2}$ (recall \eqref{eqn:defnOfM}) we get
    \[ \mu(\Om \Delta A_{\Om}) \leq C \of{\frac{\pi}{\alpha+1}M_{\alpha}(s)^{-1/2}\theta_{\alpha}(s) + \frac{1}{\alpha+2}\of{1+\frac{s}{\pi}}^{3\alpha+5}M_{\alpha}(s)^{1/2}} \delta(f;\Om,\alpha)^{1/2} \]
    for some explicitly computable constant $C$. The choice of $\eps$, however, imposes a \emph{small deficit condition} on $\delta(f;\Om,\alpha)$ when coupled with the requirement \eqref{eqn:SmallnessOfEps}. We will deal with this at the end of the proof.

    \vspace{2mm}

    \noindent\underline{\textsc{Step III}. \textit{Concluding the proof in the small deficit regime}.} We finally compare $\Om$ to the ball 
    $$S_{\Om} = \inset{v > v^*(s)}.$$
    By \eqref{eqn:Inclusions} we have $S_{\Om} \subset E_{\Om}$ and
    \[ \mu(E_{\Om} \setminus S_{\Om}) \leq \frac{C}{\alpha+2} \of{1+\frac{s}{\pi}}^{3 \alpha+5} M_{\alpha}(s)^{1/2} \delta(f;\Om,\alpha)^{1/2} .\]
    Then,
    \begin{align*}
        \mu(\Om \Delta S_{\Om}) &\leq \mu(\Om \setminus E_{\Om}) + \mu(E_{\Om} \setminus S_{\Om}) + \mu(S_{\Om} \setminus \Om) \\
        &\leq \mu(\Om \setminus E_{\Om}) + \mu(E_{\Om} \setminus S_{\Om}) + \mu(E_{\Om} \setminus \Om) \\
        &\leq \mu(E \Delta \Om) + \frac{C}{\alpha+2} \of{1+\frac{s}{\pi}}^{3\alpha+5} M_{\alpha}(s)^{1/2} \delta(f;\Om,\alpha)^{1/2}.
    \end{align*}
    We then estimate
    \begin{align*}
        \mu(E_{\Om} \Delta \Om) &\leq \mu(E_{\Om} \setminus A_{\Om}) + \mu(A_{\Om} \setminus \Om) + \mu(\Om \setminus A_{\Om}) \\
        &\leq \of{ \frac{C}{\alpha+2} \of{1+\frac{s}{\pi}}^{3\alpha+5} +C \of{\frac{\pi}{\alpha+1} \cdot \frac{\theta_{\alpha}(s)}{M_{\alpha}(s)} + \inv{\alpha+2}\of{1+\frac{s}{\pi}}^{\alpha+3}} } M_{\alpha}(s)^{1/2} \delta(f;\Om,\alpha)^{1/2} \\
        &\leq C \of{\frac{\pi}{\alpha+1} \cdot \frac{\theta_{\alpha}(s)}{M_{\alpha}(s)} + \inv{\alpha+2}\of{1+\frac{s}{\pi}}^{3\alpha+5}} M_{\alpha}(s)^{1/2} \delta(f;\Om,\alpha)^{1/2} .
    \end{align*}
    We therefore conclude that
    \[ \mu(\Om \Delta S_{\Om}) \leq C \of{\frac{\pi}{\alpha+1} \cdot \frac{\theta_{\alpha}(s)}{M_{\alpha}(s)} + \inv{\alpha+2}\of{1+\frac{s}{\pi}}^{3\alpha+5}} M_{\alpha}(s)^{1/2} \delta(f;\Om,\alpha)^{1/2} ,\]
    and thus
    \[ \mcalA_{\disk} (\Om) \leq C s^{-1} \of{\frac{\pi}{\alpha+1} \cdot \frac{\theta_{\alpha}(s)}{M_{\alpha}(s)} + \inv{\alpha+2}\of{1+\frac{s}{\pi}}^{3\alpha+5}} M_{\alpha}(s)^{1/2} \delta(f;\Om,\alpha)^{1/2} .\]
    
    \noindent\underline{\textsc{Step IV}. \textit{Concluding the proof in the large deficit regime}.} The choice $\eps = M_{\alpha}(s)^{1/2} \delta(f;\Om,\alpha)^{1/2}$ together with \eqref{eqn:SmallnessOfEps} implies that
    \[ \delta(f;\Om,\alpha)^{1/2} < c M_{\alpha}(s)^{-1/2} \left[ \frac{C(\alpha+2)}{1+C(\alpha+2)} \right] \of{1+\frac{s}{\pi}}^{-(\alpha+2)} = \tilde{\delta}^{1/2}.\]
    Now, since $\mcalA_{\disk}(\Om) \leq 2$, we can estimate
    \[ \mcalA_{\disk}(\Om) \leq 2 \of{\frac{\delta(f;\Om,\alpha)}{\tilde{\delta}}}^{1/2} ,\]
    whenever $\delta(f;\Om,\alpha) > \tilde{\delta}$. We complete the proof by combining the estimates in both the large and small deficit regimes. Renaming
    \[ C(s,\alpha) = C s^{-1} \of{\frac{\pi}{\alpha+1} \cdot \frac{\theta_{\alpha}(s)}{M_{\alpha}(s)} + \inv{\alpha+2}\of{1+\frac{s}{\pi}}^{3\alpha+5}} ,\]
    we can set $K(s,\alpha) = \max \inset{C(s,\alpha), 2 \tilde{\delta}^{-1/2}}$ to find
    \[ \mcalA_{\disk}(\Om) \leq K(s,\alpha) \delta(f;\Om,\alpha)^{1/2} . \qedhere\]
\end{proof}

\begin{remark}\label{rmk:K-value} The proof above shows us that we have a bound of the form 
\begin{align}\label{eqn:upperboundK}
K(s,\alpha) \le C \cdot s^{-1} &\left(\frac{\pi}{\alpha+1} \frac{\theta_{\alpha}(s)}{M_{\alpha}(s)} + \frac{1}{\alpha+2}\left( 1 + \frac{s}{\pi}\right)^{3\alpha+5}\right) \cr 
&\hspace{1cm} + 2 \cdot M_{\alpha}(s)^{1/2} \left[ \frac{1 + C(\alpha+2)}{C(\alpha+2)}\right] \left( 1 + \frac{s}{\pi}\right)^{\alpha +2},
\end{align}
where we recall that $M_\alpha$ is defined as in \eqref{eqn:defnOfM}. 
\end{remark}

\section{Sharpness}\label{sec:Sharpness}

In this section, we discuss the sharpness of Theorems \ref{thm:main-wavelet} and \ref{thm:BergmanStability}. 

\subsection{Variational analysis}\label{subsec:GeometryOfSuperLevelSets}

In this section, we shall lay out a basic road map for an alternative proof to Theorems \ref{thm:main-wavelet} and \ref{thm:BergmanStability}. We mention that the basic idea here is similar to the one employed in \cite[Section~5]{Inv2}: proving that `reasonable' level sets are given by perturbation of those of a Gaussian under the hypothesis that our functions are themselves perturbations of a Gaussian, and then employing calculus of variations tools in order to explicitly compute the second variation around the nearest extremals. 

We start by first summarizing the relevant results in our context regarding the geometry of level sets of functions of the form $u(z) = |f(z)|^2 (1-|z|^2)^{\alpha+2},$ in analogy to \cite[Section~3]{Inv2}. 

More explicitly, for a fixed number $t>0$, the next result studies geometric properties of the super-level sets $\{z\in \disk:u(z)>t\}$. In particular, we show that, above a certain threshold, all level sets of the function $u$ behave like those of $\sqrt{\frac{\alpha+1}{\pi}}(1-|z|^2)^{\alpha+2}$, as long as $\e(f)$ is sufficiently small. Since its proof is a standard adaptation of the proof in \cite[Lemma~3.2]{Inv2}, we decided not to include it here - we refer the reader to that result, and in general to \cite[Section~3]{Inv2}, for further details. 

\begin{numlemma}\label{lemma:star-shaped-levels}
Let $f \in \Berg$ satisfy the normalizations in the beginning of subsection~\ref{subsec:SetStability}. One may find a small constant $c_2> 0$ such that, for $t > \e(f)^{c_2},$ the level sets 
$$\{z \in \disk \colon u(z) > t \}$$
are all star-shaped with respect to the origin. Moreover, for such $t,$ the boundary $\partial\{ u > t\} = \{ u = t \}$ is a smooth, closed curve, which may be parametrized as a graph over a disk centered at the origin. 
\end{numlemma}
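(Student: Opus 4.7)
The plan is to reduce both star-shapedness and the smoothness of the boundary of $\{u > t\}$ to a single pointwise differential inequality: I will show that, under the normalizations of subsection~\ref{subsec:SetStability} and for $t > \e(f)^{c_2}$,
\[
\partial_r u(r e^{i\theta}) < 0 \quad \text{at every } (r,\theta) \text{ with } u(re^{i\theta}) = t.
\]
Once this is in place, the implicit function theorem yields a smooth parametrization $\theta \mapsto r(\theta)$ of the boundary, while the monotonicity along rays combined with $u(0) = (\alpha+1)/\pi > t$ and $u(z) \to 0$ as $|z| \to 1$ forces $\{r : u(re^{i\theta}) > t\} = [0, r(\theta))$ for each $\theta$, which is exactly the star-shapedness statement.

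To establish the key inequality I would first compute directly
\[
\partial_r u = 2 \operatorname{Re}\!\left( e^{i\theta} f'(z)\overline{f(z)}\right)(1-r^2)^{\alpha+2} - 2(\alpha+2) r \,|f(z)|^2 (1-r^2)^{\alpha+1},
\]
noting that on $\{u = t\}$ the second (negative) term equals $-2(\alpha+2) r t / (1-r^2)$. The task therefore reduces to bounding the first term against this baseline. The crucial input is a sharpened Cauchy--Schwarz estimate on $f'$ that exploits the vanishing conditions $g(0) = 0$ and $g'(0) = 0$ coming from \eqref{eq:normG}: writing $g(z) = \sum_{k \ge 2} a_k z^k/\sqrt{c_k}$ and factoring a $z$ out of $g'$, one obtains a bound of the shape $|g'(z)| \le |z|\sqrt{\tilde g_2(|z|)}$, with $\tilde g_2$ sharing the asymptotic $(1-r^2)^{-(\alpha+4)}$ of the $g_2$ in \eqref{eqn:BoundsR} but with the decisive extra factor of $|z|$. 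Inserting $f' = \e g'$, this yields
\[
\bigl| 2 \operatorname{Re}(e^{i\theta} f'\overline{f})(1-r^2)^{\alpha+2} \bigr| \le C \e\, r\, |f|(1-r^2)^{\alpha/2},
\]
and since $|f|(1-r^2)^{(\alpha+2)/2} = \sqrt t$ on the level set, the right-hand side simplifies to $C \e\, r \sqrt t/(1-r^2)$. Comparison with the baseline produces the clean condition $C\e \sqrt t < (\alpha+2) t$, i.e.\ $t > C^2 \e^2/(\alpha+2)^2$, so that any choice $c_2 \in (0,2)$ works for $\e$ sufficiently small.

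The main obstacle is exactly the refined pointwise bound on $g'$ above. The crude estimate $|g'(z)| \le \sqrt{g_2(|z|)}$ used in Lemma~\ref{lemma:BergmanSuperLevelSetsEstimate} loses the factor $|z|$ and therefore fails to control the first term near the origin, precisely in the regime where $t$ approaches the maximum $T = (\alpha+1)/\pi$ and the level set becomes a small region close to $0$ -- which is where star-shapedness is most delicate since $\partial_r u$ degenerates at the critical point $z = 0$. The two vanishing constraints on $g$ must be used simultaneously, and their combined effect is what allows the first term of $\partial_r u$ to be absorbed by the second uniformly up to the center of $\disk$. With this bound in hand, the rest of the argument is a direct application of the implicit function theorem as in \cite[Lemma~3.2]{Inv2}, yielding the smooth radial parametrization of $\partial\{u > t\}$ claimed in the statement.
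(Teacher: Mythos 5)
Your argument is correct and is precisely the ``standard adaptation of \cite[Lemma~3.2]{Inv2}'' that the paper declines to spell out: compute $\partial_r u$ on $\{u=t\}$, bound the holomorphic-derivative term against the weight term, and invoke the implicit function theorem. The one genuinely nontrivial step — exploiting that $g$ vanishes to second order at the origin (from \eqref{eq:normG}) to factor an extra $|z|$ out of $g'$, so that $\tilde g_2(r) = g_2(r)/r^2 \lesssim_\alpha (1-r^2)^{-(\alpha+4)}$ remains bounded near $r=0$ and the two factors of $r$ in $\partial_r u$ cancel — is exactly the refinement needed, and you have identified and executed it correctly; the resulting threshold $t\gtrsim \e^2/(\alpha+2)^2$ is indeed implied by $t>\e^{c_2}$ for any $c_2\in(0,2)$ once $\e$ is small (which is the regime the lemma is applied in).
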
 

Given this information on level sets, fix now $s>0$ and consider the functional
\[\K_\alpha \colon \Berg \to \R, \qquad \K_\alpha [f] := \frac{I_f(s)}{\|f\|_{\Berg}^2},\]
where we let, for fixed $f \in \Berg,$ 
\[ I_f(s) = \int_{\{u = u^*(s)\}} u(z) \diff \mu(z),\]
where $u(z) = |f(z)|^2(1-|z|^2)^{\alpha+2}$. We will prove the following result:

\begin{numthm}\label{thm:optimalperturbative}
For a given $s\in (0,\infty)$, there are explicit constants $\e_0(s;\alpha),C(s;\alpha)>0$ such that, for all $\e \in (0,\e_0),$ we have
\[ \K_\alpha [\ff_{0}] - \K_\alpha [\ff_{0}+ \e g]  \geq C(s;\alpha)\e^2, \quad \ff_{0} = \sqrt{\frac{\alpha+1}{\pi}} , \]
whenever $\|g\|_{\Berg}=1$ satisfies \eqref{eq:normG}.    
\end{numthm}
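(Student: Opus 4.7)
The plan is to compute the second variation of $\K_\alpha$ at $\ff_0$ directly, by parametrizing the super-level sets of the perturbed function $f_\e := \ff_0 + \e g$ and expanding everything in powers of $\e$.

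First, by Lemma \ref{lemma:star-shaped-levels}, for $\e$ sufficiently small (in terms of $s$ and $\alpha$) the super-level set $\{u_\e > u_\e^*(s)\}$ is star-shaped about $0$ and has a smooth boundary $\{r_\e(\theta) e^{i\theta} : \theta \in [0, 2\pi)\}$. I would then expand
\[
r_\e(\theta) = r_0 + \e r_1(\theta) + \e^2 r_2(\theta) + O(\e^3), \qquad u_\e^*(s) = t_0 + \e t_1 + \e^2 t_2 + O(\e^3),
\]
where the reference values $1 - r_0^2 = (1 + s/\pi)^{-1}$ and $t_0 = \tfrac{\alpha+1}{\pi}(1 + s/\pi)^{-(\alpha+2)}$ correspond to $\ff_0$. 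Substituting into the defining condition $u_\e(r_\e(\theta) e^{i\theta}) = u_\e^*(s)$ and the area constraint $\int_0^{2\pi} r_\e(\theta)^2/(1 - r_\e(\theta)^2)\, d\theta = 2s$ and matching orders in $\e$, one obtains explicit formulae for $r_1, r_2, t_1, t_2$. The crucial simplification is that $\langle g, 1\rangle_\Berg = 0$ forces $\int_0^{2\pi} g(r_0 e^{i\theta})\, d\theta = 0$; in turn this implies that the angular mean of $r_1$ vanishes and that $t_1 = 0$.

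Next, I would expand the numerator
\[
I_{f_\e}(s) = \int_0^{2\pi} \int_0^{r_\e(\theta)} |f_\e(r e^{i\theta})|^2 (1 - r^2)^\alpha \, r \, dr \, d\theta
\]
to second order in $\e$. The first-order term vanishes thanks to the orthogonality relations \eqref{eq:normG} together with the zero-mean property of $r_1(\theta)$ just observed. Combined with $\|f_\e\|_\Berg^2 = 1 + \e^2$ (which uses $\langle g, 1\rangle_\Berg = 0$ and $\|g\|_\Berg = 1$), this yields
\[
\K_\alpha[\ff_0] - \K_\alpha[f_\e] = \e^2 \bigl(\theta_\alpha(s) - J\bigr) + O(\e^3),
\]
where $J$ denotes the second-order coefficient of $I_{f_\e}(s)$. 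Writing $g(z) = \sum_{k \ge 2} a_k z^k / \sqrt{c_k}$ with $\sum_{k \ge 2} |a_k|^2 = 1$, the three contributions to $J$ — the area-expansion term applied to $|\ff_0|^2$, the cross term between the first-order domain variation and the linear-in-$\e$ part of $|f_\e|^2$, and the bulk term $\int_{|z| < r_0} |g|^2 (1 - |z|^2)^\alpha \, dA$ — all diagonalize after angular integration, since only zero-frequency modes survive. This yields a closed-form expression
\[
\theta_\alpha(s) - J = \sum_{k \ge 2} \mu_k(s, \alpha)\, |a_k|^2,
\]
where $\mu_k(s, \alpha)$ depends explicitly on $r_0^{2k}/c_k$ and on the truncated Bergman moments $\beta_k = c_k^{-1} \int_0^{r_0} r^{2k+1} (1 - r^2)^\alpha \, dr$.

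The main obstacle will be to establish a uniform lower bound $\mu_k(s, \alpha) \ge c(s, \alpha) > 0$ for every $k \ge 2$. For $k \to \infty$ this is the easy regime: both $\beta_k$ and $r_0^{2k}/c_k$ decay to $0$ (the Bergman moments concentrate near $|z| = 1$), so $\mu_k(s, \alpha) \to \theta_\alpha(s) > 0$. For small $k$ the situation is more delicate: a direct computation shows that the $k = 0$ and $k = 1$ modes would give $\mu_k = 0$, reflecting respectively the scaling symmetry $\ff_0 \mapsto c\,\ff_0$ and the fact that $\partial_{\bar\om} \ff_\om\big|_{\om = 0}$ is proportional to $z$, so these null directions are precisely tangent to the orbit of extremals; they are exactly the modes excluded by \eqref{eq:normG}. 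For $k \ge 2$, a combination of Beta-function identities with the monotonicity of $\beta_k$ and $r_0^{2k}/c_k$ in $k$ would show $\mu_k(s, \alpha) \ge \mu_2(s, \alpha) > 0$, yielding the final constant $C(s, \alpha) = \tfrac12 \mu_2(s, \alpha)$ after absorbing the $O(\e^3)$ error by choosing $\e_0(s, \alpha)$ sufficiently small.
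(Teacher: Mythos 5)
Your plan captures the two structural pillars of the paper's argument: the Fourier-mode diagonalization of the second variation, and the reduction to the lowest admissible mode $k = 2$ via monotonicity of the coefficients in $k$, established through Beta-function identities. The paper reaches the same formula $\tfrac{1}{2}\nabla^2\K_\alpha[\ff_0](g,g) = \sum_{k\ge 2}|a_k|^2 W_k(s)$ by running a divergence-free flow $\Phi_\e$ deforming the reference disk $\Omega_0$ into $\Omega_\e$ and differentiating under the integral via Reynolds' formula (Lemmas \ref{lemma:flows-specific} and \ref{lemma:second-variation-K}), whereas you parametrize the level-set boundary directly as a graph $r_\e(\theta)$ and match orders; these are equivalent computations and both produce the same three contributions (bulk term in $g$, normalization term, and a boundary term coming from the first-order domain variation coupled to $g$). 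Your observation that $k = 0,1$ are null directions tangent to the orbit of extremals is correct and accounts conceptually for why the orthogonality constraints \eqref{eq:normG} are exactly what is needed.

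The genuine gap is the phrase ``absorbing the $O(\e^3)$ error by choosing $\e_0(s,\alpha)$ sufficiently small.'' Taylor's theorem for a fixed $g$ does give a cubic remainder, but its implicit constant depends on $g$; since the conclusion must hold uniformly over the infinite-dimensional sphere $\{\|g\|_{\Berg} = 1\}\cap\{\text{satisfying \eqref{eq:normG}}\}$, there is no compactness and the bound on the third variation does not come for free. This is precisely why the paper isolates Lemma \ref{lemma:control3rdder}, which asserts that the deviation from the pure second-variation term is $\eta(\e)\e^2$ for a modulus of continuity $\eta$ depending \emph{only} on $s$ and $\alpha$, uniformly in $g$. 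That lemma is itself a nontrivial technical estimate (deferred to the techniques in Appendix A of \cite{Inv2}, controlling the third $\e$-derivative of the area-constrained level-set geometry uniformly). To make your argument rigorous you would need to track the $\e$-coefficients $r_1(\theta), r_2(\theta), t_1, t_2$ with explicit bounds in terms of $\|g\|_{\Berg}$ alone, and verify that the remainder after the quadratic term is genuinely $o(\e^2)$ uniformly — the star-shapedness guaranteed by Lemma \ref{lemma:star-shaped-levels} is a necessary ingredient here, but it is not by itself sufficient.
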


The proof of Theorem \ref{thm:optimalperturbative} is based on the following technical result:

\begin{numlemma}\label{lemma:control3rdder}
    There is $\e_0=\e_0(s,\alpha)$ and a modulus of continuity $\eta$, depending only on $s$, such that
        \[|\K_\alpha [\ff_{0}+ \e g] - \K_\alpha [\ff_{0}]| \le  \left| \frac{\e^2}{2}\nabla^2 \K_\alpha [\ff_{0}](g,g)\right| + \eta(\e)\e^2 \]
    for all $0\leq \e \leq \e_0(s,\alpha)$ and $g\in \Berg$ such that $\|g\|_{\Berg}=1$ and which satisfy \eqref{eq:normG}. Here we have defined
    \begin{equation*}\label{eq:definition-second-variation-K}
        \nabla^2 \K_\alpha [\ff_{0}](g,g) \coloneqq \frac{\diff^{\,2}} 
        {\diff \e^{\,2}}  \K_\alpha [\ff_{0}+\e g]\Big|_{\e = 0}.
    \end{equation*}
\end{numlemma}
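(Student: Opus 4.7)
The plan is to carry out a third-order Taylor expansion of $\e \mapsto \K_\alpha[\ff_0 + \e g]$ at $\e = 0$, showing that the linear term vanishes and that cubic (and higher) contributions are bounded by $\eta(\e) \e^2$ uniformly in $g$ on the unit sphere of $\Berg$ satisfying \eqref{eq:normG}. Thanks to \eqref{eq:normG} and $\|\ff_0\|_{\Berg} = 1$, the normalization simplifies to $\|\ff_0 + \e g\|_{\Berg}^2 = 1 + \e^2$, so the essential object is $I_{\ff_0 + \e g}(s)$ itself.

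First, I would parametrize the level curve $\partial\{u_\e > u_\e^*(s)\}$, where $u_\e(z) := |\ff_0 + \e g(z)|^2 (1-|z|^2)^{\alpha+2}$, as a smooth graph $r = r_\e(\theta)$. This is available for $\e$ sufficiently small in terms of $s$ and $\alpha$ by Lemma~\ref{lemma:star-shaped-levels}, and implicit differentiation of $u_\e(r_\e(\theta) e^{i\theta}) = u_\e^*(s)$ makes $r_\e(\theta)$ jointly smooth in $(\theta, \e)$. Writing $F(\e) := I_{\ff_0 + \e g}(s) = \int_0^{2\pi}\int_0^{r_\e(\theta)} u_\e(\rho e^{i\theta}) \tfrac{\rho}{(1-\rho^2)^2} \, d\rho\, d\theta$, one has an explicit smooth expression in $\e$ that can be differentiated order by order.

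For the vanishing of $F'(0)$, differentiating under the integral sign splits $F'(\e)$ into a boundary term and a bulk term. The boundary term factors as $u_\e^*(s) \cdot \tfrac{d}{d\e}\mu(\{u_\e > u_\e^*(s)\}) = 0$, since the level-set mass equals $s$ by construction. The bulk term at $\e = 0$ reduces to $2\sqrt{(\alpha+1)/\pi} \int_{|z| < r_0} \mathrm{Re}(g(z)) (1-|z|^2)^{\alpha}\, dA(z)$, which by rotational symmetry of the integration domain and weight is a multiple of $\mathrm{Re}\langle g, 1\rangle_{\Berg}$ and thus vanishes by \eqref{eq:normG}.

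The hard part will be controlling the third derivative $F'''(\tau)$ uniformly in $g$, since Taylor's remainder formula yields
\[
\left| F(\e) - F(0) - \tfrac{\e^2}{2} F''(0) \right| \; \le \; \tfrac{\e^3}{6} \sup_{\tau \in [0, \e]} |F'''(\tau)|.
\]
Unpacking $F'''$ produces derivatives $\partial_\e^j u_\e$ (polynomials of degree $\le 3$ in $g, \bar g$ weighted by $(1-|z|^2)^{\alpha+2}$) together with $\partial_\e^j r_\e(\theta)$, obtained by implicit differentiation analogously to the treatment of $r_\sigma', r_\sigma''$ in the proof of Lemma~\ref{lemma:BergmanSuperLevelSetsEstimate}. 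Uniformity in $g$ comes from the reproducing kernel pointwise bounds $|g^{(k)}(z)| \le C_{k,\alpha} (1-|z|^2)^{-\gamma_k}$, which degenerate near $\partial \disk$; the rescue is that the level-set radii $r_\e(\theta)$ remain bounded away from $1$ for $\e \le \e_0(s, \alpha)$, since $r_0$ itself is (being determined by a positive threshold $u_0^*(s) < \max u_0$). Restricting all pointwise estimates to a fixed compact annulus $\{|z| \le r_0 + \kappa(s,\alpha)\} \Subset \disk$ containing every curve $\{r_\e(\theta) e^{i\theta}\}$ then yields $|F'''(\tau)| \le C(s, \alpha)$, so that the remainder is at most $C(s, \alpha) \e^3 = \eta(\e) \e^2$ with $\eta(\e) = C(s,\alpha)\e$, as required.
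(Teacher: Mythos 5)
The paper itself does not supply a proof of this lemma; it defers to Appendix A of the cited reference \cite{Inv2}, whose technique (in line with the flow-based framework set up in Lemmas~\ref{lemma:flows-specific}--\ref{lemma:second-variation-K} of this paper) is exactly the Taylor expansion with third-order remainder control that you propose, merely phrased via vector fields and Reynolds' transport theorem rather than the explicit polar-coordinate Leibniz-rule bookkeeping you use. Your route is therefore essentially the same argument in more elementary clothing, and the three key ingredients are all present and correct: the boundary contribution to $F'(\e)$ vanishes by the area constraint, the bulk contribution at $\e=0$ vanishes since by the mean-value property and \eqref{eq:normG} it is a multiple of $\mathrm{Re}\,g(0)=0$, and $|F'''|$ is controlled uniformly in $g$ because the level curves remain in a compact annulus where the reproducing-kernel pointwise bounds give constants depending only on $(s,\alpha)$. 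The one discrepancy worth noting is that you end up with $\eta(\e)=C(s,\alpha)\e$, which depends on $\alpha$, whereas the lemma as written asserts dependence on $s$ only; for the way the lemma is invoked in Theorem~\ref{thm:optimalperturbative} and Corollary~\ref{cor:sharpness} (both at fixed $\alpha$) this extra $\alpha$-dependence is harmless, so this is a defect of the statement rather than of your argument, but it is worth making explicit if you write the details out.
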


The proof of Lemma \ref{lemma:control3rdder} is purely technical, for which reason we refer the reader to Appendix A in \cite{Inv2} for the techniques needed in order to prove it. Our next result shows, additionally, that the second variation is \emph{uniformly} negative whenever $g \in \Berg$ with $\|g\|_{\Berg} = 1$. 

\begin{numprop}\label{prop:negdef}
For all $g \in \Berg$ such that $\|g\|_{\Berg}=1$ and which satisfy \eqref{eq:normG}, we have
    $$\frac 1 2\nabla^2 \K_\alpha[\ff_0](g,g)\leq - \frac{\pi^{\alpha+2}}{2} s (\pi + s)^{-3 - 
  \alpha} (s+2\pi) .$$
\end{numprop}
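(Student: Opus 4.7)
The strategy is to compute $\nabla^2 \K_\alpha[\ff_0](g,g)$ explicitly via a shape-derivative calculation on the super-level set $\Omega_\e := \{u_\e > u_\e^*(s)\}$, diagonalize the second variation over the orthonormal monomial basis of $\Berg$, and then bound each eigenvalue uniformly in the mode index.

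First I would use the pointwise identity $u(z)\,d\mu(z) = |f(z)|^2(1-|z|^2)^\alpha\,dA(z)$ to rewrite $I_{f_\e}(s) = \int_{\Omega_\e}|f_\e|^2(1-|z|^2)^\alpha\,dA$. At $\e = 0$ the super-level set reduces to $B(0,r_s)$ with $r_s^2 = s/(\pi+s)$. Parameterizing $\partial\Omega_\e = \{r(\theta,\e)e^{i\theta}\}$ in polar coordinates and differentiating twice in $\e$ produces a bulk contribution (from differentiating $|f_\e|^2$) and a boundary contribution (from the motion of $\partial\Omega_\e$). The first variation $I'_0$ vanishes: the bulk part is killed by $\langle g,1\rangle_\alpha = 0$ after angular integration, while the boundary part is killed by the area constraint $\mu(\Omega_\e) = s$.

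Next I would compute the normal velocity $\phi(\theta) := \partial_\e r|_{\e=0}$ by implicitly differentiating $u_\e(r(\theta,\e)e^{i\theta}) = u_\e^*(s)$; using that $\int h\,d\theta = 0$ for $h := 2\sqrt{(\alpha+1)/\pi}\,\Re\,g$, together with $\int\phi\,d\theta = 0$, one first shows $(u_\e^*(s))'|_{\e=0} = 0$, which then gives $\phi$ in closed form as a multiple of $h(r_se^{i\theta})$. Assembling $I''_0$ then requires the bulk term $2\int_{B(0,r_s)}|g|^2(1-|z|^2)^\alpha dA$, the cross term $2r_s(1-r_s^2)^\alpha\int h\phi\,d\theta$, and the second-order area correction, obtained by differentiating $\mu(\Omega_\e) = s$ twice and solving for $\int\partial_\e^2 r|_0\,d\theta$ in terms of $\int\phi^2\,d\theta$. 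Dividing by $\|\ff_0+\e g\|_{\Berg}^2 = 1+\e^2$ and expanding $g(z) = \sum_{k\geq 2} b_k z^k/\sqrt{c_k}$ (the normalizations force $b_0 = b_1 = 0$), all cross-modes cancel under $\theta$-integration, yielding the diagonal form
\[
\tfrac{1}{2}\nabla^2\K_\alpha[\ff_0](g,g) \;=\; \sum_{k\geq 2} |b_k|^2\,\lambda_k(s,\alpha),
\]
where each $\lambda_k$ is an explicit combination of the regularized incomplete Beta function $I_{r_s^2}(k+1,\alpha+1)$, a boundary contribution of order $r_s^{2k}/c_k$, and the correction $-\theta_\alpha(s)$.

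It then suffices to show that $\sup_{k\geq 2}\lambda_k(s,\alpha)$ is bounded by the stated negative quantity. The plan is to show that the maximum of the sequence $\{\lambda_k\}_{k\geq 2}$ is attained at $k = 2$, which can be established using the explicit ratio $c_{k+1}/c_k = (k+1)/(\alpha+k+2)$ together with standard monotonicity properties of the incomplete Beta function; in the limit $k\to\infty$ both summands vanish, so $\lambda_\infty = -\theta_\alpha(s)$ already satisfies the required bound with margin. For $k = 2$, integration by parts produces a closed form for $I_{r_s^2}(3,\alpha+1)$, and the target inequality reduces to a pointwise bound in $y = 1-r_s^2 = \pi/(\pi+s)$ and $\alpha$, verifiable by algebraic manipulation. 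The main obstacle is ensuring the bound remains sharp simultaneously in the two degenerate limits $\alpha\to -1$ (where $\ff_0 \to 0$) and $\alpha\to\infty$; cancellations between the prefactor $(\alpha+1)/\pi$ coming from $\ff_0^2$, the normalization constants $c_k$, and the Beta factors must be tracked with care in order to preserve the correct asymptotic rates at both ends.
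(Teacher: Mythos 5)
Your proposal follows essentially the same route as the paper: derive the explicit second-variation formula for $\K_\alpha$ via a flow/shape-derivative argument (this is the paper's Lemma~\ref{lemma:second-variation-K}), expand $g$ in the orthonormal monomial basis to diagonalize $\nabla^2\K_\alpha[\ff_0]$ as $\sum_{k\ge 2}|a_k|^2 W_k(s)$, show via the incomplete Beta recursion that $W_k$ is decreasing in $k$ so the supremum sits at $k=2$, and then evaluate $W_2(s)$ in closed form. The paper makes the monotonicity step fully precise by invoking the contiguous relation \cite[(8.17.9)]{NIST:DLMF} for $\mathbb{B}(r^2;k+1,\alpha+1)$ together with $c_k=\pi\,\mathbb{B}(k+1,\alpha+1)$, whereas you gesture at ``standard monotonicity properties''; you would need to write out that recursion to close the argument, but the strategy is identical.
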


It is clear that Theorem \ref{thm:optimalperturbative} is an immediate consequence of the above two results (cf. \cite[Proof~of~Theorem~5.1]{Inv2}). The rest of this section is dedicated to the proof of Proposition \ref{prop:negdef}. We start by first computing the second variation of $\mc K_\alpha$. We hence consider - in analogy to \cite[Section~5]{Inv2} - the sets
\begin{equation}
    \label{eq:defOmegae}
    \Omega_\e := \{u_\e>u_\e^*(s)\}, \qquad u_\e :=u_{\ff_0+\e g} = |\ff_0+\e g|^2 (1-|\cdot|^2)^{\alpha+2},
\end{equation}
and write 
\[ \Omega_\e = \Phi_\e(\Omega_0), \]
for a suitable volume-preserving flow $\Phi_\e$. In order to construct such a flow, we recall a general lemma which allows us to build a flow that deforms the unit disk into a given family of graphical domains over the unit circle (see, for instance, \cite[Theorem~3.7]{Acerbietal} for a more general statement, and \cite[Lemma~5.4]{Inv2} for a proof of the result below). 

\begin{numlemma}\label{lemma:flows} 
Suppose that we are given a one-parameter family $\{D_\e\}_{\e \in [0,\e_0]}$ of domains, whose boundaries are given by smooth graphs over the unit circle:
$$\p D_\e = \{(1+g_\e(\omega))\omega:\omega\in \mb S^1\}.$$
Suppose, additionally, that the family $\{g_\e\}_{\e \in [0,\e_0]}$ depends smoothly on $\e.$ 

Then there exists a family $\{Y_\e\}_{\e \in [0,\e_0]}$ of smooth vector fields, which depends smoothly on the parameter $\e,$ such that, if $\Psi_\e$ denotes the flow associated with $Y_\e$, i.e. if
$$\frac{\diff}{\diff \e} \Psi_\e = Y_\e(\Psi_\e),$$
 then $\Psi_\e(\disk) = D_\e.$ In addition, $Y_\e$ is such that $\ddiv(Y_\e) = 0$ in a neighborhood of $\mb S^1.$
\end{numlemma}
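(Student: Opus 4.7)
\textbf{Proposal for Lemma \ref{lemma:flows}.} The strategy is to first build $Y_\e$ in an annular neighborhood of $\mb S^1$ (where the divergence-free constraint must hold) by a stream-function construction, and then extend smoothly to all of $\R^2$ by a cutoff in the regions where no constraint is imposed.

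\emph{Step 1 (Boundary condition).} For the flow $\Psi_\e$ of $Y_\e$ to satisfy $\Psi_\e(\mb S^1)=\p D_\e$, it is necessary and sufficient that $Y_\e$ match the normal velocity of the moving boundary. Parametrizing $\p D_\e$ by $c_\e(\omega)=(1+g_\e(\omega))\omega$, this reads
\[
Y_\e(x)\cdot n_\e(x)=V_\e(x):=\dot g_\e(\omega)\,\omega\cdot n_\e(x),\qquad x=c_\e(\omega)\in\p D_\e,
\]
where $n_\e$ is the outward unit normal; the tangential component of $Y_\e|_{\p D_\e}$ is free, and this freedom is precisely what we will spend on the divergence-free constraint. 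A straightforward computation gives
\[
\oint_{\p D_\e} V_\e \, \mathrm{d}\mathcal H^1=\tfrac{d}{d\e}|D_\e|.
\]

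\emph{Step 2 (Divergence-free construction near $\mb S^1$).} Pick $\delta_0>0$ small enough that $\p D_\e\subset U:=\{z\colon ||z|-1|<\delta_0\}$ for every $\e\in[0,\e_0]$. We look for $Y_\e=\nabla^\perp\psi_\e$ on $U$ with the ansatz
\[
\psi_\e=c_\e\,\theta+\tilde\psi_\e,\qquad c_\e\in\R,\quad \tilde\psi_\e\text{ single-valued},
\]
so that $Y_\e=c_\e\hat r/r+\nabla^\perp\tilde\psi_\e$ is single-valued and divergence-free on the annulus $U$ (note that $\hat r/r=\nabla^\perp\theta$ is globally well defined and divergence-free on $\R^2\setminus\{0\}$, even though $\theta$ itself is not). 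Since the flux of $\hat r/r$ through $\p D_\e$ equals $2\pi$ (the topological winding around the origin), the choice
\[
c_\e:=\frac{1}{2\pi}\,\frac{d}{d\e}|D_\e|
\]
absorbs the area-change flux, and the residual condition $\p_\tau\tilde\psi_\e=V_\e-c_\e(\hat r/r)\cdot n_\e$ integrates to zero around $\p D_\e$. Hence $\tilde\psi_\e$ may be defined single-valuedly on $\p D_\e$ (up to a constant) and then extended smoothly to $U$ in a tubular-coordinate system of $\p D_\e$ (for instance, constant along the normal direction, cutoff toward the two boundary components of $U$). This defines $Y_\e$ on $U$, divergence-free by construction.

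\emph{Step 3 (Global extension and verification).} Multiply $Y_\e$ by a smooth cutoff $\chi\in C_c^\infty(\R^2)$ equal to $1$ on $U$ and supported in a slightly larger set, and extend by zero outside. The resulting vector field is smooth, compactly supported, divergence-free on $U$, and smooth in $\e$ (as $g_\e,\dot g_\e,V_\e,c_\e,\tilde\psi_\e$ all depend smoothly on $\e$). Its flow $\Psi_\e$ is a smooth family of diffeomorphisms with $\Psi_0=\mathrm{id}$, and the matching of normal velocities on $\p D_\e$ yields $\Psi_\e(\p\disk)=\p D_\e$ by uniqueness of the boundary ODE; by continuity, $\Psi_\e(\disk)=D_\e$.

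\emph{Main obstacle.} The only non-routine point is Step 2: if $|D_\e|$ is not constant in $\e$, no single-valued stream function on a neighborhood of $\p D_\e$ can realize the prescribed normal data, because divergence-freeness together with a closed boundary condition would force area preservation. The resolution is cohomological: the annulus $U$ has nontrivial first de Rham cohomology, and we absorb the full area-change flux into the multivalued piece $c_\e\theta$, whose differential $\hat r/r$ is nonetheless single-valued and divergence-free. Once this obstruction is handled, all remaining ingredients (smooth tubular extension of $\tilde\psi_\e$, smooth $\e$-dependence, global cutoff, well-posedness of the flow) are standard.
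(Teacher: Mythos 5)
Your construction is correct, but it follows a genuinely different route from the one this paper leans on: here the lemma is not proved but cited (Acerbi et al. and \cite[Lemma~5.4]{Inv2}), and the proof in those references is by an explicit change of variables — in polar/normal coordinates one maps the annulus near $\mb S^1$ by $r\omega \mapsto \bigl(r^2+(1+g_\e(\omega))^2-1\bigr)^{1/2}\,\omega$, which sends the unit circle to $\p D_\e$ and has Jacobian $1$, glues this to the identity away from the boundary, and obtains $Y_\e$ by differentiating the resulting diffeomorphisms in $\e$; divergence-freeness near $\mb S^1$ is then automatic because the maps are locally area-preserving there. You instead prescribe the normal data $Y_\e\cdot n_\e=V_\e$ on the moving boundary and solve for a stream function, absorbing the net flux $\frac{d}{d\e}|D_\e|$ into the multiple $c_\e\,\nabla^\perp\theta=c_\e\,x/|x|^2$ of the generator of the annulus's first cohomology, which is the right way to handle the fact that the Euclidean areas $|D_\e|$ need not be constant (in this paper only the hyperbolic measures of the level sets are fixed, so this point is genuinely relevant; note the explicit-map proof also never uses constancy of area). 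The two approaches buy slightly different things: the cited construction is more hands-on and yields explicit formulas for $\Psi_\e$ and $Y_\e$, while yours isolates the only true obstruction (the flux/cohomology count) and makes the freedom in the tangential component transparent. Your verification steps — Reynolds' identity $\oint_{\p D_\e}V_\e\,\diff\mathcal H^1=\frac{d}{d\e}|D_\e|$, the zero-mean residual allowing a single-valued $\tilde\psi_\e$ on $\p D_\e$, the normal-constant extension plus cutoff (which leaves the field a perp-gradient, hence divergence-free, on the fixed annulus $U$), and the material-surface/uniqueness argument giving $\Psi_\e(\mb S^1)=\p D_\e$ and then $\Psi_\e(\disk)=D_\e$ — are all sound; only cosmetic issues remain, such as the clash of notation between the curve $c_\e(\omega)$ and the constant $c_\e$, and the implicit normalization $g_0\equiv 0$ (i.e., $D_0=\disk$), which is indeed how the lemma is used.
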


We are now able to apply Lemma \ref{lemma:flows} to the sets $\Omega_\e$:

\begin{numlemma}
    \label{lemma:flows-specific}
Let $g\in \Berg$ satisfy \eqref{eq:normG}. 
There is $\e_0 =\e_0(s,\|g\|_{\Berg},\alpha)> 0$ such that, for all $\e\in [0,\e_0],$ there are globally defined smooth vector fields  $X_\e$, with associated flows $\Phi_\e$, such that 
$$\Omega_\e = \Phi_\e(\Omega_0).$$ 
Moreover, $X_\e$ depends smoothly on $\e$ and is divergence-free in a neighborhood of $\partial \Omega_0$. We also have
\begin{equation}
\label{eq:zeromeanXeps}
\int_{\partial \Omega_\e} \langle X_{\varepsilon} , \nu_\e \rangle (1-|z|^2)^{-2} \, \diff \mathcal{H}^1(z) = 0,
\end{equation}
where $\nu_\e$ denotes the outward-pointing unit vector field on $\p \Omega_\e.$
\end{numlemma}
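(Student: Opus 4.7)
The plan is to exploit the explicit radial structure of $\Omega_0$ to realize each $\Omega_\e$ as a small graphical deformation of a fixed Euclidean disk, apply Lemma~\ref{lemma:flows} to the rescaled family, and deduce the zero-mean identity from the conservation of the hyperbolic measure. First, I would note that $u_0(z) = \frac{\alpha+1}{\pi}(1-|z|^2)^{\alpha+2}$ is strictly radially decreasing, so $\Omega_0 = B(0,r_0)$ with $r_0^2 = s/(s+\pi)$. For $\e$ sufficiently small, continuity gives $u_\e^*(s)$ close to $u_0^*(s) > 0$, hence above the threshold $\e^{c_2}$, so that Lemma~\ref{lemma:star-shaped-levels} applies: $\partial \Omega_\e$ is a smooth closed curve, star-shaped around the origin, of the form $r = R_\e(\theta)$. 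Setting $g_\e(\omega) := R_\e(\arg \omega)/r_0 - 1$ yields the parametrization $\partial\Omega_\e = \{ r_0(1+g_\e(\omega))\omega : \omega \in \mathbb{S}^1\}$.

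Next, I would establish smoothness in $\e$ via two implicit-function arguments. The level value $u_\e^*(s)$ depends smoothly on $\e$ because the distribution function $t \mapsto \mu(\{u_\e > t\})$ is smooth with non-vanishing derivative on the relevant range (a consequence of $|\nabla u_\e|\neq 0$ on the level curve, inherent to Lemma~\ref{lemma:star-shaped-levels}, combined with the coarea formula); the implicit function theorem applied to $u_\e(re^{i\theta}) = u_\e^*(s)$ then solves for $R_\e(\theta)$ smoothly in $(\e,\theta)$. Since $R_0 \equiv r_0$, the $g_\e$ are small in any $C^k$ norm for $\e$ small. With this in hand, I would apply Lemma~\ref{lemma:flows} to the rescaled family $D_\e := r_0^{-1}\Omega_\e \subset \disk$, whose boundaries are by construction graphs over $\mathbb{S}^1$ with data $g_\e$, producing smooth vector fields $Y_\e$, divergence-free near $\mathbb{S}^1$, whose flows $\Psi_\e$ satisfy $\Psi_\e(\disk) = D_\e$. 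Setting $X_\e(z) := r_0\, Y_\e(z/r_0)$ and $\Phi_\e(z) := r_0\, \Psi_\e(z/r_0)$ gives the desired vector field and flow on a neighborhood of $\partial\Omega_0$; the Euclidean divergence-freeness is preserved under this rigid dilation.

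For the zero-mean identity \eqref{eq:zeromeanXeps}, I would invoke the conservation of hyperbolic measure: $\mu(\Omega_\e) = s$ for every $\e$ by the very definition of $u_\e^*(s)$, so by Reynolds' transport formula applied to the smooth flow $\Phi_\e$,
\begin{equation*}
0 = \frac{\diff}{\diff \e} \mu(\Omega_\e) = \int_{\partial\Omega_\e} (1-|z|^2)^{-2} \langle X_\e, \nu_\e \rangle \, \diff \mathcal{H}^1(z),
\end{equation*}
which is exactly what is required. The hardest step will be the regularity analysis of the second stage: one must upgrade the pointwise star-shapedness and smooth-graph conclusions of Lemma~\ref{lemma:star-shaped-levels} to joint $C^k$-smoothness of $R_\e$ in $(\e,\theta)$ uniformly for $\e \in [0,\e_0]$, while carefully tracking the interplay between the normalization $\|g\|_{\Berg}=1$ and the level threshold $\e^{c_2}$. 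This is technical but parallels the analogous construction carried out in \cite{Inv2}.
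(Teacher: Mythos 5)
Your proof is correct and follows essentially the same route as the paper's: parametrize $\partial\Omega_\e$ as a graph over a centered circle via Lemma~\ref{lemma:star-shaped-levels}, establish joint smoothness in $(\e,\omega)$ via the implicit function theorem (using non-vanishing of $\nabla u_\e$ on the level curve), invoke Lemma~\ref{lemma:flows} to produce the vector fields, and derive \eqref{eq:zeromeanXeps} from Reynolds' transport formula together with the conservation $\mu(\Omega_\e)\equiv s$. Your explicit rescaling of $\Omega_\e$ by $r_0^{-1}$ so as to land exactly on the hypotheses of Lemma~\ref{lemma:flows} (which is stated for graphs over $\mathbb{S}^1$, not over $r_0\mathbb{S}^1$), and the observation that rigid dilation preserves divergence-freeness, is a useful clarification that the paper leaves implicit.
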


\begin{proof}
Suppose that $\Omega_0 = \tau \cdot \disk,$ for some $\tau \in (0,1).$ Lemma \ref{lemma:star-shaped-levels} shows that, if $\e_0$ is chosen sufficiently small, the boundaries $\p\Omega_\e$ are smooth and the sets $\Omega_\e$ are star-shaped with respect to zero, hence they can be written as graphs over $\mb S^1$:
\begin{equation}
    \label{eq:deffeps}
    \p \Omega_\e = \{ (\tau+\rho_\e(\omega)) \, \omega \colon \omega \in \mb S^1\}.
\end{equation}
We now claim that the function $(\e,\omega) \mapsto \rho_\e(\omega)$ is \emph{smooth} as long as $\e$ is sufficiently small. 

Indeed, for fixed $\e,$ the function $\omega \mapsto \rho_{\e}(\omega)$ is smooth, by Lemma \ref{lemma:star-shaped-levels}, since it is implicitly defined by $u_{\e}((\tau+\rho_\e(\omega))\cdot \omega) = u_\e^*(s).$ Moreover, since $\nabla u_{\e}$ is \emph{bounded} by a constant depending only on $s, \alpha$ when restricted to $\{u_\e = u_\e^*(s)\}$ (this follows from an explicit computation, similar to the ones undertaking in \cite[Proof~of~Lemma~3.1]{Inv2}), a careful quantified version of the implicit function theorem (cf. \cite{Krantz}) implies that there is a universal $\e_0(s,\alpha) > 0$ such that, if $\e < \e_0(s,\alpha),$ then $\e \mapsto \rho_\e(\omega)$ is smooth for any fixed $\omega \in \mathbb{S}^1.$ This proves the desired smoothness claim.

 In order to prove \eqref{eq:zeromeanXeps}, we note that since $\Omega_{\varepsilon}$ has constant \emph{hyperbolic} measure equal to $s$ for all $\varepsilon\in [0,\e_0]$,  we have that 
\begin{align*}
0 = \frac{\diff}{\diff \e} \left(\mu\left(\Om_\e\right)\right) = \int_{\partial \Om_\e} \langle X_\e, \nu_\e \rangle (1-|z|^2)^{-2} \diff \mathcal{H}^1(z),
\end{align*}
where we used Reynold's formula in order to compute derivatives above. Thus \eqref{eq:zeromeanXeps} follows at once, concluding our proof. 
\end{proof}

We are now able to explicitly compute the second variation of $\K_\alpha$ along a direction given by a fixed function $g \in \Berg$:  

\begin{numlemma}\label{lemma:second-variation-K}
For all $g\in \Berg$ which satisfy \eqref{eq:normG}, we have
    \begin{align}\label{eq:second-variation} 
 \frac 1 2\nabla^2 \K_\alpha [\ff_{0}](g,g)
 & = \int_{\Omega_0} |g(z)|^2(1-|z|^2)^{\alpha+2}\, \diff \mu(z) - \frac{(\alpha+1) \|g\|_{\Berg}^2}{\pi} \int_{\Omega_0} (1-|z|^2)^{\alpha+2} \, \diff \mu(z) \cr
 & + \frac{\pi}{\alpha+2} \fint_{\p \Omega_0} |g(z)|^2 (1-|z|^2)^{\alpha+1} \diff \mc H^1(z).
\end{align}
\end{numlemma}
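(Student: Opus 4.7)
The plan is to differentiate $\K_\alpha[\ff_0+\e g] = I(\e)/N(\e)$ twice at $\e = 0$, where $I(\e) := \int_{\Omega_\e} u_\e \diff \mu$ and $N(\e) := \|\ff_0+\e g\|^2_{\Berg}$. Hypothesis \eqref{eq:normG} forces $\langle \ff_0, g\rangle_{\Berg} = 0$ (since $\ff_0$ is the constant function), whence $N(\e) = 1+\e^2\|g\|_{\Berg}^2$. The quotient rule then reduces everything to
\[
\tfrac{1}{2}\nabla^2\K_\alpha[\ff_0](g,g) = \tfrac{1}{2}I''(0) - \|g\|_{\Berg}^2\, I(0),
\]
so the task is to compute $I'(0)$ (which I expect to vanish) and $I''(0)$.

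For $I'(\e)$, I apply the Reynolds transport theorem along the flow $\Phi_\e$ from Lemma~\ref{lemma:flows-specific}. The key observation is that $u_\e \equiv u_\e^*(s)$ on $\p \Omega_\e$, so the boundary term factorizes as $u_\e^*(s)\cdot\int_{\p\Omega_\e}\langle X_\e,\nu_\e\rangle(1-|z|^2)^{-2}\diff \mc H^1 = 0$ thanks to \eqref{eq:zeromeanXeps}. Hence $I'(\e) = \int_{\Omega_\e}\p_\e u_\e\diff\mu$ for every $\e$. At $\e = 0$ one has $\p_\e u_\e|_{\e=0} = 2\sqrt{(\alpha+1)/\pi}\,\Re(g)(1-|z|^2)^{\alpha+2}$; since $\Omega_0$ is a disk centered at the origin and \eqref{eq:normG} forces the Bergman expansion of $g$ to start at $k \ge 2$, rotational invariance gives $I'(0) = 0$.

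Differentiating the Reynolds formula once more yields
\[
I''(\e) = \int_{\Omega_\e}\p_\e^2 u_\e\diff\mu + \int_{\p\Omega_\e}\p_\e u_\e \langle X_\e,\nu_\e\rangle(1-|z|^2)^{-2}\diff\mc H^1.
\]
At $\e = 0$ the interior piece equals $2\int_{\Omega_0}|g|^2(1-|z|^2)^{\alpha+2}\diff\mu$. For the boundary piece I parametrize $\p\Omega_\e = \{(\tau+\rho_\e(\omega))\omega\}$, so that $\langle X_0,\nu_0\rangle = \dot\rho_0(\omega)$, and implicitly differentiate $u_\e((\tau+\rho_\e(\omega))\omega) = u_\e^*(s)$ at $\e = 0$. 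The variation $\p_\e u_\e^*(s)|_{\e=0}$ is then pinned down by integrating the resulting expression for $\dot\rho_0$ against arclength: the volume identity \eqref{eq:zeromeanXeps} combined with the mean value property $\frac{1}{2\pi}\int_0^{2\pi}g(\tau e^{i\theta})\diff\theta = g(0) = 0$ (from \eqref{eq:normG}) forces $\p_\e u_\e^*(s)|_{\e=0} = 0$, leaving
\[
\dot\rho_0(\omega) = \sqrt{\frac{\pi}{\alpha+1}}\cdot\frac{1-\tau^2}{\tau(\alpha+2)}\,\Re(g(\tau\omega)).
\]
Inserting this into the boundary integral and invoking $\int_0^{2\pi}(\Re g(\tau e^{i\theta}))^2\diff\theta = \tfrac{1}{2}\int_0^{2\pi}|g(\tau e^{i\theta})|^2\diff\theta$ (valid because $g^2$ is holomorphic and vanishes at the origin) produces the boundary contribution $\frac{2\pi}{\alpha+2}\fint_{\p\Omega_0}|g|^2(1-|z|^2)^{\alpha+1}\diff\mc H^1$. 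Combined with $I(0) = \frac{\alpha+1}{\pi}\int_{\Omega_0}(1-|z|^2)^{\alpha+2}\diff\mu$, the claimed identity follows.

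The principal difficulty is the implicit-differentiation step: one must simultaneously track the variation of the level value $u_\e^*(s)$, use \eqref{eq:zeromeanXeps} to determine it, and correctly match the ambient arclength measure $\diff\mc H^1$ against the averaged integral $\fint$ appearing in the target identity. Everything else is bookkeeping, very much in the spirit of \cite[Section~5]{Inv2}.
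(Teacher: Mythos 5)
Your proposal is correct and follows essentially the same route as the paper: Reynolds transport twice, an implicit-differentiation computation of the boundary velocity on $\p\Omega_0$, and the orthogonality/mean-value argument showing the first-order change of $u_\e^*(s)$ vanishes (the paper phrases this as $\tfrac{\diff}{\diff\e}\rho_\e(t)|_{\e=0}=0$ and then Taylor-expands the rearrangement; you argue the equivalent statement directly). The one small economy you gain is observing from \eqref{eq:normG} that $N(\e)=1+\e^2\|g\|_{\Berg}^2$ up front, which trims the quotient-rule bookkeeping, but the key identifications — $\langle X_0,\nu_0\rangle=\dot\rho_0$ proportional to $\Re g$ on $\p\Omega_0$, and $\fint_{\p\Omega_0}(\Re g)^2=\tfrac12\fint_{\p\Omega_0}|g|^2$ via holomorphy of $g^2$ and $g(0)=0$ — are the same as in the paper's proof.
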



\begin{proof}
We first set $\diff \nu_\alpha(z) := (1-|z|^2)^{\alpha} \diff z$ for brevity. We thus introduce the auxiliary functions
\begin{equation}
    \label{eq:defIJ}
    I_\e^{\alpha} := \int_{\Omega_\e} |\ff_{0}+\e g|^2 \diff \nu_\alpha, \qquad J_{\e}^{\alpha}:=\int_\disk |\ff_{0}+\e g|^2 \diff \nu_\alpha;
\end{equation}
we write additionally $K_\e^{\alpha} \coloneqq \K_\alpha[\ff_{0}+\e g]$, where we recall  that $\Omega_\e \coloneqq \{u_\e>u_\e^*(s)\}$. We will always take $\e\leq \e_0$, where $\e_0$ is given by Lemma \ref{lemma:flows-specific}. We need to compute the derivatives of those quantities: arguing similarly as in the proof of \cite[Lemma~5.6]{Inv2}, we have 
\begin{align}
\begin{split}
\label{eq:derK}
\pe K_\e^\alpha & = \Big(\pe I_\e^\alpha - I_\e^\alpha \frac{\pe J_\e^\alpha }{J_\e^\alpha}\Big) \frac{1}{J_\e^\alpha},\\
\pe^2 K_\e^\alpha &= \Big(\pe^2 I_\e^\alpha - I_\e^\alpha \frac{\pe^2 J_\e^\alpha}{J_\e^\alpha}\Big) \frac{1}{J_\e}- \frac{2 \pe J_\e^\alpha}{(J_\e^\alpha)^2} \Big(\pe I_\e^\alpha -I_\e^\alpha \frac{\pe J_\e^\alpha}{J_\e^\alpha}\Big),
\end{split}
\end{align}
and using Reynold's theorem, we have explicitly
\begin{align}
\label{eq:derIJ}
\pe I_\e^\alpha & = 2 \int_{\Omega_\e} \text{Re}(g \cdot \overline{\ff_{0}+\e g}) \diff \nu_\alpha,  \\ 
\pe J_\e^\alpha  & = 2\int_{\disk} \text{Re} (g \cdot \overline{\ff_{0}+ \e g}) \diff \nu_\alpha,\\
\label{eq:derIJsecond}
\pe^2 I_\e^\alpha & = 2 \int_{\Omega_\e} |g|^2 \diff \nu_\alpha + 2\int_{\p \Omega_\e} \text{Re}(g \cdot \overline{\ff_{0}+\e g}) \langle X_\e,\nu_\e\rangle (1-|z|^2)^{\alpha}, \\
 \pe^2 J_\e^\alpha  & = 2 \int_{\disk} |g|^2 \diff \nu_\alpha.
\end{align}
Here, $X_\e$ denotes the vector fields built in Lemma \ref{lemma:flows-specific}. Note that, in order to obtain \eqref{eq:derIJ}, we used that $u_\e$ is constant on $\p \Omega_\e$, together with \eqref{eq:zeromeanXeps}. 

Since $\langle g, 1\rangle_{\Berg}=0$, $\Omega_0$ is a ball and $g$ is holomorphic, from \eqref{eq:derIJ} it follows that 
\begin{equation}
    \label{eq:derK0}
    \pe\Big|_{\e = 0} I_\e^\alpha =\pe\Big|_{\e = 0} J_\e^\alpha =0 \qquad \implies \qquad \frac{\diff}{\diff \e} \K_\alpha[\ff_{0}+\e g] \Big|_{\e=0} = \pe\Big|_{\e = 0} K_\e = 0,
\end{equation}
where the implication follows from the first equation in \eqref{eq:derK}. Combining thus \eqref{eq:derK}--\eqref{eq:derK0}, we obtain that
\begin{align}\label{eqn:second-variation-first-formula} 
 \frac{\diff^{\,2}}{\diff \e^{\,2}}\mc K_\alpha[\ff_{0}+\e g]\Bigg|_{\e = 0 } 
 & =2\left( \int_{\Omega_0} |g|^2(1-|z|^2)^{\alpha} -  \frac{(\alpha+1) \|g\|_{\Berg}^2}{\pi} \int_{\Omega_0} (1-|z|^2)^\alpha \right. \cr 
 & \left. + \int_{\p \Omega_0} \Re(g(z) \ff_{0})  \langle X_0, \nu\rangle (1-|z|^2)^{\alpha}  \right).
\end{align}
Our main task is to simplify the last term: we first claim that
\begin{equation}
\label{eq:dermu}
\frac{\diff}{\diff \e}\Big|_{\e=0} \rho_\e(t) = 0,
\end{equation}
where $\rho_\e(t) \coloneqq \rho_{\ff_{0}+\e g}(t) =\mu\left(\{u_\e>t\}\right)$.
To prove this claim we build, as in Lemma \ref{lemma:flows-specific}, a family of vector fields $Y_\e$ with associated flows $\Psi_\e$ such that $\Psi_\e(\{u_0>t\}) = \{u_\e>t\}$. It follows from Lemma \ref{lemma:star-shaped-levels} that these sets have smooth boundaries, and hence we are able to apply Lemma \ref{lemma:flows}.  We compute, for $z \in \partial \{ v > t \} = \{v = t\},$
\begin{equation}\label{eqn:boundary-vector-expansion} 
t \equiv u_\e(\Psi_\e(z))= \left(\ff_{0}^2 + 2 \e\ff_{0}\Re g(z)- 2 \ff_0^2 \e \frac{ (\alpha+2) \langle Y_0(z),z\rangle}{1-|z|^2} + O(\e^2)\right )(1-|z|^2)^{\alpha+2},
\end{equation}
and thus, since the first order term in $\e$ vanishes, we have
\begin{equation}\label{eqn:condition-vector-field-boundary} 
\Re g(z) = \ff_{0}\cdot (\alpha+2) \langle Y_0,z\rangle \cdot \left((1-|z|^2)\right)^{-1}. 
\end{equation}
We can now prove \eqref{eq:dermu}: again by Reynold's formula, we have
\begin{equation}\label{eqn:condition-derivative-measure}
\frac{\diff}{\diff \e}\Big|_{\e=0} \rho_\e(t) = \int_{\p\{v>t\}} \langle Y_0,\nu\rangle 
= 2\pi \fint_{\p\{v>t\}} \langle Y_0,z\rangle 
= \frac{2\ff_{0}\pi}{\alpha+2} \fint_{\p\{v>t\}} \left( \Re g(z) \right) \cdot \left((1-|z|^2)\right)^{-1} = 0,
\end{equation}
since $\p\{v>t\}$ is a centered circle and  $\Re g$ is harmonic and vanishes at $0$, where the last equality follows from \eqref{eq:normG}.

Now let us fix $s>0$ and recall that $g(0)=0$. Using the smoothness of $\e \mapsto \rho_\e$ first and then the smoothness of $\rho_0$ on a neighborhood of $v^*(s),$ we obtain:  
\begin{align*}
s & = \rho_\e(u_\e^*(s))  = \rho_0(u_\e^*(s))+ 2\e \ff_{0} \Re(g(0)) + O(\e^2)\\
 & = s + (u_\e^*(s)-v^*(s))\frac{\diff}{\diff t}\Big|_{t=v^*(s)} \rho_0(t) + O(\e^2) \\
 & = s - (u_\e^*(s) - v^*(s)) \cdot \frac{\diff}{\diff t} \Big|_{t=v^*(s)} \left( \frac{1-t^{\frac{1}{\alpha+2}}}{t^{\frac{1}{\alpha+2}}} \right) + O(\e^2) \\ 
 & = s + \frac{u_\e^*(s) - v^*(s)}{(\alpha+2)\left(v^*(s)\right)^{\frac{\alpha+3}{\alpha+2}}} + O(\e^2)
\end{align*}
where we used the fact that $g(0)=0$ by \eqref{eq:normG}.
Thus, after rearranging, we find
$$u_\e^*(s) =  (1 + O_s(\e^2))v^*(s).$$
Since $\Phi_\e$ is the flow of $X_\e$, we have
\begin{equation}
    \label{eq:expansionPhieps}
    \Phi_\e(z) = \Phi_0(z) + \e X_0(\Phi_0(z)) + O(\e^2) = z+ \e X_0(z) + O(\e^2).
\end{equation}
Comparing the expressions
\begin{align*}
u_\e(\Phi_\e(z)) & = \left(\ff_{0}^2 + 2 \e \ff_{0}\Re g(z)- 2\e \ff_{0}^2\frac{(\alpha+2) \langle X_0(z),z\rangle}{1-|z|^2} + O(\e^2) \right)(1-|z|^2)^{\alpha+2} ,\\
 u_\e^*(s) & = (1 + O(\e^2)) v^*(s),
\end{align*}
we deduce that, on $\p \{v>v^*(s)\} = \p \Omega_0$, the first order terms in $\e$ must be the same, thus
\begin{equation}\label{eq:vector-field-X_0} 
\ff_0(\alpha+2) \cdot \frac{\langle X_0,z\rangle}{1-|z|^2} = \Re g(z) \text{ on } \partial \Omega_0. 
\end{equation}
Finally, since $g$ is holomorphic and $g(0)=0$, we have
\begin{align*}
\fint_{\p \Omega_0} (\Re g)^2 \diff \mc H^1 
 = \fint_{\p \Omega_0} \Re(g^2) \diff \mc H^1+ \fint_{\p \Omega_0} (\Im g)^2 \diff \mc H^1
=  \fint_{\p \Omega_0} (\Im g)^2 \diff \mc H^1.
\end{align*}
Now, we simply note that
\begin{align*}
   \int_{\p \Omega_0} (\Re g) \langle X_0,\nu \rangle \cdot (1-|z|^2)^{\alpha} \diff \mc H^1 
    & = \frac{2 \pi}{\ff_0(\alpha+2)} \fint_{\p \Omega_0} (\Re g)^2 \cdot (1-|z|^2)^{\alpha+1} \diff \mc H^1 \\
    & = \frac{\pi}{\ff_0(\alpha+2)} \fint_{\p \Omega_0} |g(z)|^2 (1-|z|^2)^{\alpha+1} \diff \mc H^1,
\end{align*}
and plugging back into the \eqref{eqn:second-variation-first-formula} yields the claim, as desired. 
\end{proof}

We are finally ready to prove Proposition \ref{prop:negdef}. We start off by noting that 
 \eqref{eq:second-variation} can be rewritten using the power series for $g$ and \eqref{eq:normG}: 
\begin{align}\label{eq:expansion-second-variation-coeff} 
\begin{split}
 \frac 1 2\nabla^2 \K[\ff_{0}](g,g) =& \sum_{k=2}^\infty |a_k|^2 W_k(s),
\end{split}
\end{align} 
where
\begin{align}\label{eq:definition-W-k} 
W_k(s) & \coloneqq \frac{1}{c_k} \int_{B(0,\sqrt{\frac{s}{s+\pi}})} |z|^{2k} (1-|z|^2)^{\alpha+2} \, \diff \mu(z)
- \frac{\pi}{\alpha+1} \int_{B(0,\sqrt{\frac{s}{s+\pi}})} (1-|z|^2)^{\alpha+2} \, \diff \mu(z) \cr 
&+ \frac{\pi}{(\alpha+2)c_k} \cdot \left( 1 + \frac{s}{\pi}\right)^{-(\alpha+1)}\cdot \left( \frac{s}{s+\pi}\right)^{k}. 
\end{align} 
The first term in the right-hand side of \eqref{eq:definition-W-k} may be explicitly computed to be 
\[
\frac{\pi}{ c_k} \mathbb{B}(r^2;k+1,\alpha+1),
\]
where $\mathbb{B}$ denotes the incomplete Beta function: 
\[
\mathbb{B}(\sigma;a,b) = \int_0^\sigma t^{a-1} \cdot (1-t)^{b-1} \, \diff t,
\]
and $r \coloneqq \sqrt{\frac{s}{s+\pi}}$. Now we use the following relation satisfied by the incomplete Beta function (see \cite[(8.17.9)]{NIST:DLMF}): 
\begin{align*} 
\mathbb{B}(r^2;k+1,\alpha+1) & = \frac{\mathbb{B}(k+1,\alpha+1)}{\mathbb{B}(k,\alpha+1)} \cdot \mathbb{B}(r^2;k,\alpha+1) - \frac{\mathbb{B}(k+1,\alpha+1)}{k \mathbb{B}(k,\alpha+1)} \cdot r^{2k}(1-r^2)^{\alpha+1} \cr 
    & = \frac{\Gamma(k+1)\Gamma(\alpha+1)}{\Gamma(k)\Gamma(\alpha+1)} \cdot \frac{\Gamma(k+\alpha+1)}{\Gamma(k+\alpha+2)} \cdot \mathbb{B}(r^2;k;\alpha+1) \cr 
    & - \frac{\Gamma(k+1)\Gamma(\alpha+1)}{k \Gamma(k) \Gamma(\alpha+1)} \cdot \frac{\Gamma(k+\alpha+1)}{\Gamma(k+\alpha+2)} \cdot r^{2k}(1-r^2)^{\alpha+1}.
\end{align*} 
Here, we have denoted the usual Beta function by $\mathbb{B}(a,b) \coloneqq \mathbb{B}(1;a,b).$ Now, since $c_k = \frac{\Gamma(\alpha+1)\Gamma(k+1)}{\Gamma(k+\alpha+2)} \pi = \pi \mathbb{B}(\alpha+1,k+1)$, we have 
\begin{align*}
\frac{\pi}{c_k} \mathbb{B}(r^2;k+1,\alpha+1) = \frac{\pi}{c_{k-1}} \mathbb{B}(r^2;k,\alpha+1) - \frac{\pi}{k c_{k-1}} \cdot r^{2k}(1-r^2)^{\alpha+1}. 
\end{align*}
Hence, we have that
\begin{align*}
W_k(s) & = \frac{\pi}{c_{k-1}} \mathbb{B}(r^2;k,\alpha+1) - \frac{\pi}{\alpha+1} \int_{B(0,\sqrt{\frac{s}{s+\pi}})} (1-|z|^2)^{\alpha+2} \, \diff \mu(z) \cr 
 & + \frac{\pi}{(\alpha+2)c_{k-1}} \left( \frac{k-1}{k} \right)  \left( 1 + \frac{s}{\pi}\right)^{-(\alpha+1)}\cdot \left( \frac{s}{s+\pi}\right)^{k} \cr 
 & < \frac{\pi}{c_{k-1}} \mathbb{B}(r^2;k,\alpha+1) - \frac{\pi}{\alpha+1} \int_{B(0,\sqrt{\frac{s}{s+\pi}})} (1-|z|^2)^{\alpha+2} \, \diff \mu(z) \cr 
 & + \frac{\pi}{(\alpha+2)c_{k-1}}  \left( 1 + \frac{s}{\pi}\right)^{-(\alpha+1)}\cdot \left( \frac{s}{s+\pi}\right)^{k-1} = W_{k-1}(s). 
\end{align*}
In order to conclude the desired bound, notice that it suffices to evaluate $W_2(s)$ explicitly, since all other coefficients are smaller than that. But this is a direct computation: 
\[
W_2(s) = - \frac{\alpha+1}{2} \pi^{\alpha+1} s (\pi + s)^{-3 - 
  \alpha} (s+2\pi).
\]
The conclusion of Proposition \ref{prop:negdef} follows then directly from \eqref{eq:expansion-second-variation-coeff}. 

\subsection{Stability Analysis}

We now prove that Theorems \ref{thm:main-wavelet} and \ref{thm:BergmanStability} are \emph{sharp}, both in terms of the order of stability obtained, and in terms of the growth of the stability constant with respect to the hyperbolic measure of the sets under consideration. Moreover, we shall see that the results are aslo \emph{sharp} when $\alpha \to -1$, in the sense that the constants obtained are best possible in terms of their dependence on $\alpha$ when $\alpha \to -1$. 

In order to prove the desired sharpness results, we start with a key proposition, which is directly related to the latter issue of sharpness as the Bergman parameters converge to $-1$. 

\begin{numprop}\label{prop:ComputationsSharpness}
    Let $\alpha > -1, \, s>0$ and $B \subset \disk$ a ball centered at the origin with $\mu(B)=s$. Then there is a constant $c(s) > 0$ independent of $\alpha$ such that the function
    \[ f_{\alpha}(z) = \frac{a_{0}}{\sqrt{ c_{0} }} + \frac{a_{2}}{\sqrt{ c_{2} }}z^2 \]
    satisfies
    \[
    \frac{\left\| f_\alpha - \sqrt{\frac{\alpha+1}{\pi}}\right\|_{\Berg}^2}{\|f_\alpha\|_{\Berg}^2 \cdot \delta(f_\alpha; B,\alpha)}  > c(s), \quad \text{as} \ \frac{a_2}{a_0} \to 0. 
    \]
\end{numprop}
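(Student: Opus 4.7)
Since $f_\alpha$ lies in the two-dimensional subspace of $\Berg$ spanned by the orthonormal elements $1/\sqrt{c_0}$ and $z^2/\sqrt{c_2}$, my plan is to reduce both numerator and denominator of the ratio to explicit scalar expressions in $(a_0,a_2,s,\alpha)$, pass to the limit $a_2/a_0\to 0$, and then invoke the Faber--Krahn inequality of \cite{RamosTilli}.

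Normalizing $\|f_\alpha\|_{\Berg}^2 = a_0^2 + a_2^2 = 1$ and noting that $\sqrt{(\alpha+1)/\pi}$ is exactly $1/\sqrt{c_0}$ (the zeroth basis vector), Bergman orthogonality immediately yields
\[
\|f_\alpha - \sqrt{(\alpha+1)/\pi}\|_{\Berg}^2 = (a_0-1)^2 + a_2^2 = 2(1-a_0) = \frac{2a_2^2}{1+a_0}.
\]
For the deficit, since $B = B(0,r)$ with $r^2 = s/(\pi+s)$ is centered at the origin, the angular integral $\int_0^{2\pi}\cos(2\theta)\,d\theta = 0$ kills the cross term in $|f_\alpha|^2 = a_0^2/c_0 + 2\frac{a_0a_2}{\sqrt{c_0c_2}}\Re(z^2) + a_2^2|z|^4/c_2$. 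Writing $I_k(r) := \int_B |z|^{2k}(1-|z|^2)^\alpha\,dz$ and using $I_0(r)/c_0 = \theta_\alpha(s)$ (a direct change of variables), one obtains
\[
\int_B |f_\alpha|^2(1-|z|^2)^\alpha\,dz = a_0^2\theta_\alpha(s) + \frac{a_2^2}{c_2}I_2(r), \qquad \delta(f_\alpha;B,\alpha) = a_2^2\!\left(1 - \frac{I_2(r)}{c_2\theta_\alpha(s)}\right).
\]
Dividing, the ratio in the statement becomes $\tfrac{2}{(1+a_0)(1-I_2(r)/(c_2\theta_\alpha(s)))}$, which in the limit $a_2/a_0 \to 0$ (so $a_0\to 1$) converges to $\bigl(1 - I_2(r)/(c_2\theta_\alpha(s))\bigr)^{-1}$.

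To finish, I apply the Faber--Krahn inequality of \cite{RamosTilli} to the unit-norm function $g(z) = z^2/\sqrt{c_2}$: this gives $I_2(r)/c_2 \leq \theta_\alpha(s)$, with strict inequality since $g$ is not proportional to any reproducing kernel. Hence the limit lies in $[1,\infty)$ uniformly in $\alpha > -1$, and $c(s) = 1$ suffices for the bare statement. For a sharper $c(s)$ that matches the $\log(1+s/\pi)$ growth of $M_\alpha(s)$ as $\alpha\to -1$ (the regime that makes Theorem~\ref{thm:BergmanStability} sharp), I would compute $I_2(r) = \pi\mathbb{B}(r^2;3,\alpha+1)$ via two integrations by parts to obtain a rational expression in $u^{\alpha+1}$, where $u = \pi/(\pi+s)$; letting $\alpha\to -1$ with $v = 1-u$ then yields the limit value $v(2+v)/(-2\log(1-v)) \in (0,1)$, whose reciprocal is $\asymp \log(1+s/\pi)$ for $s$ large, matching the growth of $M_\alpha(s)$ at $\alpha = -1$.

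The main obstacle, if one insists on such a quantitatively sharp $c(s)$, is verifying that the rational expression above stays uniformly bounded away from $1$ across the full range $\alpha\in(-1,\infty)$: this is routine but tedious, combining the explicit $\alpha\to -1$ limit with the easy $\alpha\to\infty$ asymptotic (the expression tends to $0$ since $u^{\alpha+1}(\alpha+1)\to 0$ while the bracket stays polynomially bounded) and continuity in $\alpha$. For the statement as written, however, no such effort is needed: the Faber--Krahn shortcut alone closes the argument, and the essential mathematical content is the orthogonality-based reduction of both numerator and deficit to explicit scalars.
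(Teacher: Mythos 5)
Your reduction of both numerator and deficit to explicit scalars is correct and matches the paper's starting point: by Bergman orthogonality with the normalization $a_0^2+a_2^2=1$, the numerator becomes $2(1-a_0) = 2a_2^2/(1+a_0)$ and the deficit $a_2^2\bigl(1-I_2/(c_2\theta_\alpha(s))\bigr)$, so the ratio converges to $\bigl(1-I_2/(c_2\theta_\alpha(s))\bigr)^{-1}$ as $a_2/a_0\to 0$.

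Where you genuinely diverge from the paper is in how you dispose of the deficit factor. The paper evaluates $I_2$ explicitly (a two-term incomplete Beta integral, fully written out in \eqref{eqn:DeficitExplicit}), rewrites the deficit as $\tfrac{a_2^2}{a_0^2+a_2^2}c_\alpha(s)$, and then identifies the $\alpha\to -1$ limit of $c_\alpha(s)$ in closed form to extract a uniform bound. You instead observe that $I_2/c_2 = \int_B |g|^2(1-|z|^2)^\alpha\,dz$ for the unit-norm function $g = z^2/\sqrt{c_2}$, so the RamosTilli/Faber--Krahn inequality immediately gives $I_2/(c_2\theta_\alpha(s)) \in (0,1)$ for every $\alpha>-1$ at once, hence the limit of the ratio is strictly greater than $1$. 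This shortcut is cleaner for the purpose of the Proposition: the $\alpha$-uniformity of your bound is manifest, whereas in the paper it is tied up with an explicit but only partially spelled-out estimate on $c_\alpha(s)$ away from the $\alpha\to -1$ endpoint. What your argument gives up is the closed-form value of the sharpness constant and its precise $\alpha\to -1$ asymptotics, which the paper records (and which are the quantitative content used to discuss sharpness of the $\log(1+s/\pi)$ growth); but as you correctly note, the downstream use in Corollary~\ref{cor:sharpness} only needs existence of a positive $\alpha$-independent $c(s)$, which your Faber--Krahn step provides. Two small caveats: strictly speaking the limit lies in $(1,\infty)$ rather than $[1,\infty)$, and to make the conclusion ``$>c(s)$ as $a_2/a_0\to 0$'' robust it is safer to take, say, $c(s)=1/2$ rather than $c(s)=1$ (since the limit is $>1$ but need not be bounded away from $1$ by a universal margin as $s\to 0$ or at interior values of $\alpha$); neither affects the validity of your argument.
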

\begin{proof}
    The norm of $f_{\alpha}$ is
    $\lVert f_{\alpha} \rVert_{\mathbf{B}_{\alpha}^2}^2 = a_{0}^2+a_{2}^2$,
    while its asymmetry reads
    \[ \frac{{\left\lVert  f_{\alpha} - \sqrt{ \frac{\alpha+1}{\pi} }  \right\rVert_{\mathbf{B}_{\alpha}^{2}}^2}}{\lVert f_{\alpha} \rVert_{\mathbf{B}_{\alpha}^2}^2} = \frac{{\left( a_{0}-\sqrt{ a_{0}^2 + a_{2}^2 } \right)^2 + a_{2}^2}}{a_{0}^2+a_{2}^2} = \frac{\left(1-\sqrt{ 1+\frac{a_{2}^2}{a_{0}^2} }\right)^2 + \frac{a_{2}^2}{a_{0}^2}}{1+\frac{a_{2}^2}{a_{0}^2}} = u^2 + o(u^2) ,\]
    where $ u= a_{2}/a_{0}$. On the other hand, in order to compute the deficit, we first need to compute the integral
    \[ I_{f_{\alpha}}(s) = \int_{B} \lvert f_{\alpha}(z) \rvert^2 (1-\lvert z \rvert^2)^{\alpha}  \, \mathrm{d}z , \quad \mu(B)=s ,\]
    where the radius of $B$ is denoted by $r$. To this end, notice that
    \[ \lvert f_{\alpha}(z) \rvert^2 = \frac{a_{0}^2}{c_{0}} + \frac{a_{0}^2}{c_{2}}\lvert z \rvert^4 + 2\frac{a_{0}a_{2}}{\sqrt{ c_{0}c_{2}}} \mathrm{Re}(z^2),\]
    and we can reduce $I_{f_{\alpha}}(s)$ to computing the integrals
    \begin{align*}
        I_{1} &= \int_{B} (1-\lvert z \rvert^2)^{\alpha}  \, \mathrm{d}z = 2 \pi \int_{0}^r \rho (1-\rho^2)^{\alpha} \, \mathrm{d}\rho ,\\
        I_{2} &= \int_{B} \lvert z \rvert^4 (1-\lvert z \rvert^2)^{\alpha}  \, \mathrm{d}z = 2 \pi \int_{0}^r \rho^5 (1-\rho^2)^{\alpha} \, \mathrm{d}\rho , \\
        I_{3} &= \int_{B} \mathrm{Re}(z^2) (1-\lvert z \rvert^2)^{\alpha}  \, \mathrm{d}z = 0 .
    \end{align*}
    This results in
    \begin{align*}
        I_{1} &= \frac{\pi}{\alpha+1} \left( 1-\left( 1+\frac{s}{\pi} \right)^{-(\alpha+1)} \right), \\
        I_{2} &= \pi \left[ \frac{1-\left( 1+\frac{s}{\pi} \right)^{-(\alpha+1)}}{\alpha+1} -2 \frac{1-\left( 1+\frac{s}{\pi} \right)^{-(\alpha+2)}}{\alpha+2} + \frac{1-\left( 1+\frac{s}{\pi} \right)^{-(\alpha+3)}}{\alpha+3} \right] ,
    \end{align*}
    and putting it all together,
    \[ I_{f_{\alpha}}(s)= \frac{a_{0}^2}{c_{0}} I_{1} + \frac{a_{2}^2}{c_{2}}I_{2} .\]
    Now, the deficit reads
    \[ \delta(f_{\alpha}; B, \alpha) = \frac{\lVert f \rVert_{\mathbf{B}_{\alpha}^2}^2 \theta_{\alpha}(s) - I_{f_{\alpha}}(s)}{\lVert f \rVert_{\mathbf{B}_{\alpha}^2}^2 \theta_{\alpha}(s)}, \quad \text{ where we recall that }\theta_{\alpha}(s) = 1-\left( 1+\frac{s}{\pi} \right)^{-(\alpha+1)} ,\]
    and by computing this we reach
    \begin{align}\label{eqn:DeficitExplicit}
        \delta(f_{\alpha}; B, \alpha) &= \frac{\lVert f \rVert_{\mathbf{B}_{\alpha}^2}^2 \theta_{\alpha}(s) - I_{f_{\alpha}}(s)}{\lVert f \rVert_{\mathbf{B}_{\alpha}^2}^2 \theta_{\alpha}(s)} \cr
        &= \frac{a_{2}^2}{a_{0}^2+a_{2}^2}\left( 1-\frac{(\alpha+2)(\alpha+3)}{2} \right) - \frac{a_{2}^2 \frac{(\alpha+1)(\alpha+2)(\alpha+3)}{2 \pi} \left(\frac{1-\left( 1+\frac{s}{\pi} \right)^{-(\alpha+3)}}{\alpha+3} -2 \frac{1-\left( 1+\frac{s}{\pi} \right)^{-(\alpha+2)}}{\alpha+2} \right)}{(a_{0}^2+a_{2}^2)\left( 1-\left( 1+\frac{s}{\pi} \right)^{-(\alpha+1)} \right)}\cr
        &= (\alpha+1)\frac{a_2^2}{a_0^2+a_2^2} \of{\frac{(\alpha+4)}{2} + \frac{(\alpha+2)(\alpha+3)}{2\pi} \cdot \frac{\left(2 \frac{1-\left( 1+\frac{s}{\pi} \right)^{-(\alpha+2)}}{\alpha+2} - \frac{1-\left( 1+\frac{s}{\pi} \right)^{-(\alpha+3)}}{\alpha+3} \right)}{\left( 1-\left( 1+\frac{s}{\pi} \right)^{-(\alpha+1)} \right)}}
    \end{align}
    For any fixed $\alpha > -1$, the deficit is then of the order of $a_2^2$. Moreover, if we write
    \begin{equation}\label{eqn:DeficitComputation}
        \delta(f_{\alpha};B,\alpha) = \frac{a_2^2}{a_0^2+a_2^2} c_{\alpha}(s) ,
    \end{equation}
    then $c_{\alpha}(s)$ converges to a bounded function $0 \leq c(s) \leq 1$ as $\alpha \to -1$. Indeed, the right-hand side of the last line of \eqref{eqn:DeficitExplicit} converges to 
    \[
    \frac{a_2^2}{a_0^2 + a_2^2} \frac{1}{\pi \log\left( 1+ \frac{s}{\pi}\right)} \cdot\left( 1 - \left( 1+\frac{s}{\pi}\right)^{-1} - \frac{1}{2}\left( 1- \left(1+\frac{s}{\pi}\right)^{-2}\right)\right) =: \frac{a_2^2}{a_0^2 + a_2^2} c(s),
    \]
    which is bounded by $1$ after a simple computation - as a matter of fact, this function is explicitly given by 
    \[
    c(s) = \frac{s^2}{2 \pi (s+\pi)^2 \cdot \log \left( 1 + \frac{s}{\pi}\right)},
    \]
    and it can be show that it has exactly one maximum point $s_0 \in (0,+\infty)$, and $c(s_0) < 0.1$. 
\end{proof}

We are now able to state and prove the main result of this section. 

\begin{numcor}[Sharpness]\label{cor:sharpness}
    The following hold true:
    \begin{enumerate}[font=\normalfont]
        \item For each fixed $\alpha > -1,$ the factor $\delta(f;\Om,\alpha)^{1/2}$ in \eqref{eqn:BergmanStabilityFunction} and \eqref{eqn:BergmanStabilitySet} cannot be replaced by $\delta(f;\Om,\alpha)^{\beta}$ for any $\beta > 1/2$. 
        \item The constant in \eqref{eqn:BergmanStabilityFunction} has a sharp dependency on $\alpha$, in the sense that there is no $c < 1$ such that \eqref{eqn:BergmanStabilityFunction} holds with constant proportional to 
        \begin{equation}\label{eqn:FakeConstant} 
        \Sigma(c,\alpha) \coloneqq \left( 1 + \frac{\alpha+2}{\alpha+1} \left[ \left( 1 + c \cdot \frac{s}{\pi}\right)^{\alpha+1} - 1\right]\right)^{1/2} 
        \end{equation} 
        for all $\alpha$ sufficiently large. More generally, for a given $\alpha \in (-1,+\infty)$, \eqref{eqn:BergmanStabilityFunction} can only hold with constant \eqref{eqn:FakeConstant} for some $c<1$ if $c \ge \left(\frac{\pi(\alpha+2)}{\alpha+1}\right)^{-\frac{1}{\alpha+1}}$. 
        
        \item Theorem \ref{thm:BergmanStability} is sharp when $\alpha \to -1$, in the sense that, there is a sequence of functions $f_{\alpha,\varepsilon}$ such that the right-hand and left-hand sides of \eqref{eqn:BergmanStabilityFunction} both behave as $C \varepsilon$, for some absolute constant $C>0$, as $\alpha \to -1, \varepsilon \to 0$.
    \end{enumerate}
\end{numcor}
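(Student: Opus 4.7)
The plan is to prove all three items using one explicit family of near-extremizers, namely
\[
f_{\alpha,\varepsilon}(z) = \sqrt{\frac{\alpha+1}{\pi}} + \varepsilon \cdot \frac{z^2}{\sqrt{c_2}}, \qquad \varepsilon \to 0^+,
\]
which is the family of Proposition \ref{prop:ComputationsSharpness} specialized to $a_0 = 1,\ a_2 = \varepsilon$. We would pair it with $\Omega$ taken to be either the ball $B$ centered at the origin of hyperbolic measure $s$ (for (i), (iii)) or the super-level set $\{u_{f_{\alpha,\varepsilon}} > u^*(s)\}$ (for (ii)); the direction $g(z) = z^2/\sqrt{c_2}$ is the one singled out by the monotonicity $W_k(s) < W_{k-1}(s)$ in \eqref{eq:expansion-second-variation-coeff}, so it saturates the second-variation bound of Proposition \ref{prop:negdef}.

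For parts (i) and (iii), Proposition \ref{prop:ComputationsSharpness}, together with \eqref{eqn:DeficitComputation}, would yield directly
\[
\frac{\|f_{\alpha,\varepsilon} - \sqrt{(\alpha+1)/\pi}\|_{\Berg}}{\|f_{\alpha,\varepsilon}\|_{\Berg}} = \varepsilon + O(\varepsilon^3), \qquad \delta(f_{\alpha,\varepsilon}; B, \alpha)^{1/2} = \sqrt{c_\alpha(s)}\, \varepsilon + O(\varepsilon^2),
\]
with $c_\alpha(s) > 0$. The fact that both quantities are of the same order in $\varepsilon$ precludes any exponent $\beta > 1/2$ in \eqref{eqn:BergmanStabilityFunction}, \eqref{eqn:BergmanStabilitySet}, yielding (i). For (iii), we additionally invoke the key observation from the end of the proof of Proposition \ref{prop:ComputationsSharpness}: $c_\alpha(s) \to c(s) > 0$ as $\alpha \to -1$. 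Combined with the fact that $M_\alpha(s)$ from \eqref{eqn:defnOfM} admits a finite positive limit (equal up to absolute constants to $1 + \log(1+s/\pi)$) as $\alpha \to -1$, this shows that both sides of \eqref{eqn:BergmanStabilityFunction} evaluated on $f_{\alpha,\varepsilon}$ scale as $C\varepsilon$ with constants bounded and bounded away from zero uniformly as $\alpha \to -1$.

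For part (ii), the strategy is to take $\Omega = \{u_{f_{\alpha,\varepsilon}} > u^*(s)\}$ and exploit the explicit value of $W_2(s)$ from \eqref{eq:definition-W-k}. Using Lemma \ref{lemma:control3rdder} together with $\delta = 1 - K_\alpha[f_{\alpha,\varepsilon}]/\theta_\alpha(s)$ under this choice of $\Omega$, we would compute
\[
\delta(f_{\alpha,\varepsilon}; \Omega, \alpha) = \frac{\varepsilon^2|W_2(s)|}{\theta_\alpha(s)} + O(\varepsilon^3) = \frac{\varepsilon^2 (\alpha+1)\pi^{\alpha+1} s(\pi+s)^{-3-\alpha}(s+2\pi)}{2\theta_\alpha(s)} + O(\varepsilon^3),
\]
while the left-hand side of \eqref{eqn:BergmanStabilityFunction} evaluated on $f_{\alpha,\varepsilon}$ equals $\varepsilon + O(\varepsilon^2)$. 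If \eqref{eqn:BergmanStabilityFunction} held with $\Sigma(c,\alpha)$ in place of $\Sigma(1,\alpha)$, dividing by $\varepsilon$ and sending $\varepsilon \to 0$ would force $\Sigma(c,\alpha)^2 \gtrsim \theta_\alpha(s)/|W_2(s)|$; matching leading-order asymptotics of both sides as $s \to \infty$ produces a condition of the form $c^{\alpha+1} \gtrsim (\alpha+1)/(\pi(\alpha+2))$, and the exact multiplicative constant in this threshold is pinned down by cross-referencing with the Fock sharpness of Theorem \ref{thm:GGRT-main} under the limit $\alpha \to \infty$ (as discussed in Section \ref{sec:Fock}). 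The first statement in (ii) then follows since $((\alpha+1)/(\pi(\alpha+2)))^{1/(\alpha+1)} \to 1$ as $\alpha\to\infty$.

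The main obstacle I expect is in part (ii): while the direct second-variation computation identifies the correct structural form of the threshold up to absolute constants, pinning down the precise multiplicative factor $\pi(\alpha+2)/(\alpha+1)$ (rather than a constant of the same order) requires the delicate comparison with the Fock-case sharpness of Theorem \ref{thm:GGRT-main} under the appropriate rescaling. Parts (i) and (iii), by contrast, reduce essentially to unpacking Proposition \ref{prop:ComputationsSharpness} and its limiting constant $c(s)$.
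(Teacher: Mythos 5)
Your choice of the family $f_{\alpha,\varepsilon}(z) = \sqrt{(\alpha+1)/\pi} + \varepsilon z^2/\sqrt{c_2}$ and your identification of Proposition~\ref{prop:ComputationsSharpness}, Proposition~\ref{prop:negdef}, and Lemma~\ref{lemma:control3rdder} as the working ingredients are consistent with the paper, and the broad outline of parts (i) and (iii) on the function side is essentially right. However, there are three concrete issues.

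\textbf{Set-asymmetry in (i).} Pairing $f_{\alpha,\varepsilon}$ with the centered ball $B$ cannot yield sharpness of \eqref{eqn:BergmanStabilitySet}: one has $\mcalA_{\disk}(B)=0$, so the inequality $\mcalA_{\disk}(B)\le K\,\delta^\beta$ holds trivially for \emph{any} $\beta>0$ and tests nothing. To get a nontrivial lower bound on $\mcalA_{\disk}$ you must take $\Omega_\varepsilon=\{u_\varepsilon>u_\varepsilon^*(s)\}$, which is a genuine (non-circular) perturbation of a disk. The paper uses the flow machinery from Section~\ref{subsec:GeometryOfSuperLevelSets}: by Lemma~\ref{lemma:flows-specific} and \eqref{eq:expansionPhieps} one writes $\Phi_\varepsilon(z)=z+\varepsilon X_0(z)+O(\varepsilon^2)$, and \eqref{eq:vector-field-X_0} forces $X_0(z)=\frac{\cos(2\theta)}{\pi}z$ on $\partial\Omega_0$; the angular oscillation of this perturbation gives $\mu(\Omega_\varepsilon\triangle\Omega_0)\gtrsim r(\Omega_0)^2\varepsilon$, hence $\mcalA_{\disk}(\Omega_\varepsilon)\gtrsim\varepsilon\sim\delta^{1/2}$. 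This geometric step is absent from your plan.

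\textbf{Lower bound on the infimum in (i) and (iii).} The first displayed identity in your plan,
$\|f_{\alpha,\varepsilon}-\sqrt{(\alpha+1)/\pi}\|_{\Berg}/\|f_{\alpha,\varepsilon}\|_{\Berg}=\varepsilon+O(\varepsilon^3)$,
only gives an upper bound on the left-hand side of \eqref{eqn:BergmanStabilityFunction}, which is an infimum over $c$ and $\omega$. For sharpness you need a matching lower bound, i.e.\ you must argue that $\sup_{\omega\in\disk}|\langle f_{\alpha,\varepsilon},\ff_\omega\rangle_{\Berg}|$ is attained at $\omega=0$. The paper does this with a short Taylor-expansion argument comparing $|f_{\alpha,\varepsilon}(\omega)|^2$ against $(1-|\omega|^2)^{-(\alpha+2)}$; it is a small but necessary extra step.

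\textbf{Part (ii).} You correctly identify that $W_2(s)$ controls the leading-order asymptotics, but the cross-reference to the Fock sharpness in Theorem~\ref{thm:GGRT-main} is not needed, and is not what the paper does -- indeed it would be circular in spirit, since that theorem is itself derived from the Bergman estimates in Section~\ref{sec:Fock}. Combining Lemma~\ref{lemma:control3rdder} with Proposition~\ref{prop:negdef} and the choice $\Omega_\varepsilon=\{u_\varepsilon>u_\varepsilon^*(s)\}$ yields, after dividing through by $\delta^{1/2}$ and letting $\varepsilon\to0$, the closed-form inequality
\[
\Sigma(c,\alpha)^2\left(1+\frac{s}{\pi}\right)^{-\alpha-1}\ \ge\ \frac{(s+\pi)^2}{s(s+2\pi)}\cdot\frac{\theta_\alpha(s)}{\alpha+1},
\]
valid for all $s>0$. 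Sending $s\to\infty$ gives $(\alpha+2)c^{\alpha+1}\ge 1$ directly, and letting $\alpha\to\infty$ concludes the first statement in (ii). The threshold falls out of this self-contained computation with no external input.
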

\begin{proof}
First note that, if we take $f_{\alpha,\varepsilon} = \frac{1}{\sqrt{c_0}} + \frac{\varepsilon}{\sqrt{c_2}} z^2$, then Proposition \ref{prop:ComputationsSharpness} yields that there is $c(s) > 0$ such that 
    \[
    \frac{\left\|f_{\alpha,\varepsilon} - \sqrt{\frac{\alpha+1}{\pi}} \right\|_{\Berg}^2}{\|f_{\alpha,\varepsilon}\|_{\Berg}^2} \ge c(s) \delta(f_{\alpha,\varepsilon};B,\alpha),
    \]
    for all $\alpha > -1, \varepsilon>0$. Now, note additionally that, since
    \begin{align*}
     \inf_{\substack{\abs{c} = \norm{f_{\varepsilon,\alpha}}_{\Berg} ,\\ \om \in \disk}} \|f_{\alpha,\varepsilon} - c \ff_\om\|_{\Berg}^2 = 2\|f_{\alpha,\varepsilon}\|_{\Berg}^2 - 2 \sup_{\om \in \disk} \|f_{\alpha,\varepsilon} \|_{\Berg} \left| \langle f_{\alpha,\varepsilon}, \ff_\om \rangle_{\Berg} \right|,
    \end{align*} 
it is enough to show that $\sup_{\om \in \disk} \left| \langle f_{\alpha,\varepsilon}, \ff_\om \rangle_{\Berg} \right|$ is attained at $\om = 0$. Since $f_\om$ is an explicit multiple of the reproducing kernel for the Bergman space $\Berg$, we have to prove that
\[
\left| \langle f_{\alpha,\varepsilon}, \ff_\om \rangle_{\Berg} \right| = \sqrt{\frac{\pi}{\alpha+1}} |f_{\alpha,\varepsilon}(\om)| (1-|\om|^2)^{\frac{\alpha+2}{2}} \le 1,
\]
with equality if and only if $\om = 0$. But this follows from a Taylor expansion argument: more explicitly, since 
\begin{align*}
    |f_{\alpha,\varepsilon}(\om)|^2 &= \frac{1}{c_0} + 2 \frac{\varepsilon}{\sqrt{c_0 c_2}} \text{Re}(\om^2) + \frac{\varepsilon^2}{c_2}|\om|^4 \\
    &\leq \frac{\alpha+1}{\pi} + 2 \varepsilon \frac{\alpha+1}{\pi} \cdot \sqrt{\frac{(\alpha+2)(\alpha+3)}{2}} |\om|^2 + \varepsilon^2 \cdot \frac{\alpha+1}{\pi} \cdot \frac{(\alpha+2)(\alpha+3)}{2} |\om|^4, 
\end{align*}
and as we have through a simple Taylor expansion computation
\[
(1-|\om|^2)^{-(\alpha + 2)} > 1 + (2+\alpha) |\om|^2 + \frac{(2+\alpha)(3+\alpha)}{2} |\om|^4, 
\]
we conclude the desired inequality as long as $\varepsilon < \frac{1}{10}$, for instance. This finishes the proof of (iii), and incidentally also that of the part of (i) relative to the sharpness of \eqref{eqn:BergmanStabilityFunction}. 

In order to deal with the part of (i) related to the sharpness of \eqref{eqn:BergmanStabilitySet}, we again consider the functions $f_{\alpha,\e}$ as before, and take $\Omega_\e = \{ u_\e > u_\e^*(s) \}$, where $u_\e(z) = |f_{\alpha,\e}(z)|^2 \cdot (1-|z|^2)^{\alpha+2}$. Whenever $\e > 0$ is sufficiently small -- here, we allow the degree of how small $\e$ must be to depend on $\alpha$, since we are essentially interested only in the limit behavior of quantities as $\e \to 0$ -- we have that $\Om_\e$ is a perturbation of a centered disk. We are now able to employ the tools developed in Section \ref{subsec:GeometryOfSuperLevelSets}: we may use Lemma \ref{lemma:flows-specific} to write $\Omega_\e = \Phi_\e(\Omega_0)$, provided that $\e$ is small enough. As we saw in \eqref{eq:expansionPhieps} we may write 
$$\Phi_\e(z) = z + \e X_0(z) + O(\e^2),\qquad \text{where } X_0(z) = h_0(z) z,$$ for some scalar function $h_0\colon \C\backslash \{0\}\to \R$. Since $\frac{\alpha+2}{1-|z|^2} \cdot \langle X_0(z), z \rangle = \ff_{0} \Re(z^2)$ on $\partial \Omega_0$ by \eqref{eq:vector-field-X_0},  we have 
\[ h_0(z) = \ff_{0} \frac{(1-|z|^2) \cdot \Re(z^2)}{(\alpha+2) |z|^2} =
\frac{\cos(2 \theta)}{\pi}\]
for $z = r(\Omega_0)e^{i \theta} \in \partial \Omega_0,$ where $r(\Omega_0)$ denotes the radius of the ball $\Omega_0$. Hence, 
\[
\mu\left(\Omega_\e \triangle \Omega_0\right) \geq \mu\left( \Omega_\e \setminus \Omega_0 \right) \ge  \mu\left( \Big\{z=re^{i \theta} \colon r(\Omega_0)< r <r(\Omega_0)+\e\frac{\cos(2\theta)}{\pi} -C\e^2 \Big\}\right) > c \, r(\Omega_0)^2 \e, 
\]
which concludes the proof of this part.
    
Finally, for the proof of (ii), note that if \eqref{eqn:BergmanStabilityFunction} could hold with $\Sigma(c,\alpha)$ as in \eqref{eqn:FakeConstant} as constant, then using Proposition \ref{prop:negdef} and Lemma \ref{lemma:control3rdder} we would have that, for $f_{\alpha,\e}$ above,
\begin{align*} 
\Sigma(c,\alpha) \delta(f_{\alpha,\e};\Om_\e,\alpha)^{1/2} 
 & \ge  \inf_{\substack{\abs{c} = \norm{f}_{\Berg} ,\\ \om \in \disk}} \frac{\norm{f_{\alpha,\e} -c\ff_{\om}}_{\mathbf{B}_{\alpha}^2}}{\norm{f_{\alpha,\e}}_{\mathbf{B}_{\alpha}^2}} = c \cdot  \varepsilon \cr 
 & \ge \left( \frac{  \left( \K_\alpha[\ff_{0}] - \K_\alpha[f_{\alpha,\e}] \right)}{(\alpha+1)\pi^{\alpha+1} s(s+2\pi)(s+\pi)^{-3-\alpha}} \right)^{1/2} \cr 
 & = \left( \frac{(s+\pi)^2 \cdot \left( 1 + \frac{s}{\pi}\right)^{\alpha+1}}{(\alpha+1) s(s+2\pi)} \right)^{1/2} \theta_{\alpha}(s)^{1/2} \cdot \delta(f_{\alpha,\e};\Om_\e,\alpha)^{1/2}.
\end{align*} 
Here, Lemma \ref{lemma:control3rdder} and Proposition \ref{prop:negdef} were used in the second passage. We would then conclude that 
\begin{equation}\label{eqn:ContradictionConstants} 
\Sigma(c,\alpha)^2 \cdot \left(1+\frac{s}{\pi}\right)^{-\alpha-1}  \ge \frac{(s+\pi)^2}{s(s+2\pi)}  \cdot \frac{\theta_\alpha(s)}{\alpha+1}
\end{equation} 
holds for all $s > 0$. Taking $s \to \infty$ in \eqref{eqn:ContradictionConstants} we obtain
\[
(\alpha+2)c^{\alpha+1} \geq 1
\]
which is equivalent to $c \ge (\alpha+2)^{-\frac{1}{\alpha+1}}$. Taking $\alpha \to \infty$ in this last inequality implies the desired assertion, and finishes our proof. 
    
\end{proof}

\begin{remark} There is a clear difference between the limiting case of Part (ii) in Corollary \ref{cor:sharpness} and the bound obtained for each $\alpha > -1$. Effectively, it is not clear at the moment whether the the bound $c>(\alpha+2)^{-\frac{1}{\alpha+1}}$ is best possible for \emph{each} $\alpha > -1$ with the current techniques. 

It currently seems that a further refinement of the proof of Theorem \ref{thm:HardyStability} could be key in order to improve the constant until the threshold $(\alpha+2)^{-\frac{1}{\alpha+1}}$, but since it goes beyond the scope of this manuscript, we decided not to investigate that issue here. We believe, however, that the investigation of this matter would be on its own a very interesting further direction. 

\end{remark}

\section{Proof of Theorem \ref{thm:GGRT-main}}\label{sec:Fock}

\subsection{Concentration results for the short-time Fourier transform}

We start by showing how one can recover the following main result from \cite{NicolaTilli} as a limitting case of our results: 

\begin{theorem*}[Main~Theorem~in~\cite{NicolaTilli}] 
If $\Omega \subset \mathbb{R}^2$ is a measurable set with positive and finite Lebesgue measure, and if $F \in \mathcal{F}^2(\mathbb{C}) \backslash\{0\}$ is an arbitrary function, then
$$
\frac{\int_{\Omega}|F(z)|^2 e^{-\pi|z|^2} \mathrm{~d} z}{\|F\|_{\mathcal{F}^2}^2} \leq 1-e^{-|\Omega|}
$$

Moreover, equality is attained if and only if $\Omega$ coincides (up to a set of measure zero) with a ball centered at some $z_0 \in \mathbb{C}$ and, at the same time, $F=c F_{z_0}$ for some $c \in \mathbb{C} \backslash\{0\}$.
\end{theorem*}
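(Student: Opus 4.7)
The plan is to derive the Nicola--Tilli theorem as the limit $\alpha \to \infty$ of the Bergman concentration inequality implicit in Theorem \ref{thm:BergmanStability} (namely $\delta(f;\Om,\alpha) \ge 0$), via a scaling argument. A preliminary reduction: since both sides of the target inequality are monotone and continuous under $\Om_M \coloneqq \Om \cap B(0,M) \nearrow \Om$, it suffices to treat bounded $\Om$. Fix then a bounded $\Om \subset \C$ and $F \in \Fock$. With $R_\alpha \coloneqq \sqrt{(\alpha+1)/\pi}$, and $\alpha$ large enough that $\Om \subset B(0,R_\alpha)$, set
\[ f_\alpha(z) \coloneqq R_\alpha \cdot F(R_\alpha z), \quad z \in \disk, \qquad \Om_\alpha \coloneqq R_\alpha^{-1} \Om \subset \disk. \]

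Writing $F = \sum_n a_n e_n$ in the Fock basis $e_n(z) = \sqrt{\pi^n/n!}\, z^n$ and expanding $f_\alpha$ in the Bergman basis $z^n/\sqrt{c_n}$ gives, after a short calculation,
\[ \|f_\alpha\|_{\Berg}^2 = \sum_n |a_n|^2 \prod_{j=2}^{n+1} \frac{\alpha+1}{\alpha+j}. \]
Each factor lies in $(0,1]$ and tends to $1$ as $\alpha \to \infty$, so dominated convergence on the series gives $\|f_\alpha\|_{\Berg}^2 \to \|F\|_{\Fock}^2$. The substitution $w = R_\alpha z$ yields
\[ \int_{\Om_\alpha} |f_\alpha|^2 (1-|z|^2)^\alpha \diff z = \int_\Om |F(w)|^2 \of{1 - \tfrac{\pi|w|^2}{\alpha+1}}^\alpha \diff w \xrightarrow{\alpha \to \infty} \int_\Om |F|^2 e^{-\pi|w|^2} \diff w, \]
with the limit justified by dominated convergence on the \emph{bounded} domain $\Om$ (on which $|F|^2$ is bounded and the integrand factor is bounded by $1$). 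A direct calculation gives $\mu(\Om_\alpha) = |\Om|/R_\alpha^2 + O(R_\alpha^{-4})$, whence $\theta_\alpha(\mu(\Om_\alpha)) = 1 - (1 + |\Om|/(\alpha+1))^{-\alpha-1} + o(1) \to 1 - e^{-|\Om|}$.

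Combining these three convergences with the Bergman Faber--Krahn estimate $\int_{\Om_\alpha} |f_\alpha|^2 (1-|z|^2)^{\alpha} \diff z \le \theta_\alpha(\mu(\Om_\alpha)) \|f_\alpha\|_{\Berg}^2$ and sending $\alpha \to \infty$ yields the Nicola--Tilli inequality for bounded $\Om$, whence the general case via the initial reduction. For the equality case, one applies Theorem \ref{thm:GGRT-main}: equality in Nicola--Tilli forces $\delta_{\Fock}(F;\Om) = 0$, so the function-stability estimate gives $F = cF_{z_0}$ for some $z_0 \in \C$, while the asymmetry estimate gives $\mcalA(\Om) = 0$, i.e. $\Om = B(z_1,r)$ up to a null set. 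The identification $z_0 = z_1$ then follows from the translation covariance of the Bargmann--Fock shift (under which the Nicola--Tilli quotient is invariant) together with the radial decrease of the Gaussian: after translating to $z_1 = 0$, the map $a \mapsto \int_{B(0,r)} e^{-\pi|w-a|^2} \diff w$ is strictly maximised at $a = 0$, forcing $z_0 = 0 = z_1$.

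The main technical obstacle is the passage to the limit in the unbounded setting: a direct dominated convergence argument on $\C$ is obstructed by the fact that the only obvious pointwise bound $(1-\pi|w|^2/(\alpha+1))^\alpha \le e^{-c\pi|w|^2}$ has $c < 1$, and so produces no integrable majorant against $|F|^2$. The preliminary reduction to bounded $\Om$ circumvents this cleanly, since on a bounded domain the integrand is uniformly bounded and the limit becomes elementary; the full inequality is then recovered from this case via the monotonicity of both sides in $\Om$.
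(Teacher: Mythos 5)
Your proposal is correct, and it follows the same overall strategy as the paper: deduce the Nicola--Tilli inequality as the $\alpha \to \infty$ limit of the Bergman concentration inequality, applied to sets rescaled by a factor of order $\sqrt{\pi/\alpha}$. The implementation differs in one genuine way: the paper keeps the Taylor coefficients of $F$ and transplants them into the Bergman basis $z^k/\sqrt{c_k}$, so that $\|f_\alpha\|_{\Berg} = \|F\|_{\Fock}$ exactly and all the limiting work falls on the concentration integral (including the cross-term function $H$); you instead dilate $F$ itself, $f_\alpha(z) = R_\alpha F(R_\alpha z)$, which makes the integral over the bounded set $\Om$ an elementary dominated-convergence limit of $\int_\Om |F|^2 \of{1-\pi|w|^2/(\alpha+1)}^{\alpha}\diff w$, at the cost of the (correctly computed) product formula $\|f_\alpha\|_{\Berg}^2 = \sum_n |a_n|^2 \prod_{j=2}^{n+1}\frac{\alpha+1}{\alpha+j} \to \|F\|_{\Fock}^2$. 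You also add two things the paper's proof leaves implicit: the truncation $\Om \cap B(0,M) \nearrow \Om$ reducing general finite-measure sets to bounded ones, and the equality characterization, which you obtain from Theorem \ref{thm:GGRT-main}; this is not circular, since that theorem is derived from Theorem \ref{thm:BergmanStability} and the convergence computations, not from the equality case being proved. One small gloss there: vanishing of the deficit gives only $\inf_{z}\|F - cF_z\|_{\Fock} = 0$ and $\mcalA(\Om)=0$, so to conclude $F = cF_{z_0}$ and that $\Om$ is a ball you should note the infima are attained (e.g.\ $|F(z)|^2e^{-\pi|z|^2} \to 0$ as $|z|\to\infty$, so its supremum is achieved, and similarly a minimizing sequence of balls cannot escape to infinity); with that remark, your argument, including the final identification $z_0 = z_1$ via the strict radial monotonicity of $a \mapsto \int_{B(0,r)} e^{-\pi|w-a|^2}\diff w$, is complete.
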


\begin{proof} Fix a function $F \in \Fock$, which we shall write as 
\[
F(z) = \sum_{k \ge 0} \sqrt{\frac{\pi^k}{k!}} a_k z^k.
\]
We start by recalling some previous results. Indeed, the result in \cite{RamosTilli} establishes that for $f \in \mathbf{B}_{\alpha}^2$ and $\Omega \subset \mathbb{D}$ of hyperbolic measure $\mu(\Omega)=s$ satisfies
\[ \int_{\Omega} \lvert f(z) \rvert^2 (1-\lvert z \rvert^2)^{\alpha+2}  \, \mathrm{d} \mu(z) \leq \int_{D_{s}} (1-\lvert z \rvert^2)^{\alpha+2} \, \mathrm{d}z , \]
where $D_{s}$ is a disk centered at the origin, and of hyperbolic measure $\mu(D_{s})=s$. 
We then define, for each $\alpha$ sufficiently large, the auxiliary function $f_{\alpha} \in \Berg$ as
\begin{equation}\label{eqn:BergmanFunction} 
f_\alpha(z) = \sum_{k \geq 0} a_{k} \frac{z^k}{\sqrt{ c_{k} }} ,
\end{equation}
where $c_k$ are defined as in Section \ref{sec:Prelim}. Explicitly computing, we obtain
\[ \int_{D_s} \lvert f_{\alpha}(z) \rvert^2 (1-\lvert z \rvert^2)^{\alpha}  \, \mathrm{d}z = \int_{D_s} H(z)(1-\lvert z \rvert^2)^{\alpha} \, \mathrm{d}z + \sum_{k \geq 0} \lvert a_{k} \rvert^2 \int_{D_s} \frac{\lvert z \rvert^{2k}}{c_{k}} (1-\lvert z \rvert^2)^{\alpha} \, \mathrm{d}z ,\]
for some function $H(z)$. The only important property of $H$ for us for now is that, when $\Omega = D_{s}$, the integral term containing $H$ vanishes due to the orthogonality of monomials over centered circles, and we are left with only the right-most term. 

We begin the proof of the result in \cite{NicolaTilli} by letting $\Omega \subset \mathbb{R}^2$ be a bounded, measurable set of volume $\lvert \Omega \rvert >0$. Consider the rescaled set
\[ \Omega_{\alpha} = \sqrt{ \frac{\pi}{\alpha} } \Omega ,\]
then $\lvert \Omega_{\alpha} \rvert = \frac{\pi}{\alpha} \lvert \Omega \rvert$. For a large enough $\beta$, all the sets $\Omega_{\alpha}$ for $\alpha>\beta$ will be contained in $\mathbb{D}$, and therefore we can apply the result in \cite{RamosTilli}. This yields
\[ \int_{\Omega_{\alpha}} \lvert f_\alpha (z) \rvert^2 (1-\lvert z \rvert^2)^{\alpha}  \, \mathrm{d}z \leq \frac{\Gamma(\alpha+2)}{\Gamma(\alpha+1)} \cdot \frac{1}{\pi} \int_{D_{s_{\alpha}}} (1-\lvert z \rvert^2)^{\alpha}  \, \mathrm{d}z  .\]
The number $s_{\alpha}$ is simply the hyperbolic measure of $\Omega_{\alpha}$. The right-hand side can be integrated in polar coordinates, taking into account that the radius of the ball $D_{s}$ is $\left[1-\left( 1+\frac{s}{\pi} \right)^{-1}\right]^{1/2}$,
\begin{align*}
\int_{D_{s}} (1-\lvert z \rvert^2)^{\alpha} \, \mathrm{d}z &= 2\pi \int_{0}^{[1-(1+s/\pi)^{-1}]^{1/2}} (1-r^2)^{\alpha}r \, \mathrm{d}r \\
&= \pi \int_{0}^{1-(1+s/\pi)^{-1}} (1-t)^{\alpha} \, \mathrm{d}t = \frac{\pi}{ \alpha} \int_{0}^{\alpha[1-(1+s/\pi)^{-1}]} \left( 1-\frac{y}{\alpha} \right)^{\alpha}  \, \mathrm{d}y .
\end{align*}
Hence, the main result in \cite{RamosTilli} yields
\begin{equation}\label{eqn:ConsequenceRT}   \int_{\Omega_{\alpha}} \lvert f_\alpha (z) \rvert^2 (1-\lvert z \rvert^2)^{\alpha}  \, \mathrm{d}z \leq \frac{\Gamma(\alpha+2)}{\alpha \Gamma(\alpha+1)}  \int_{0}^{\alpha[1-(1+s_{\alpha}/\pi)^{-1}]} \left( 1-\frac{y}{\alpha} \right)^{\alpha}  \, \mathrm{d}y  .
\end{equation} 
Now, as we let $\alpha \to \infty$, the factor in front of the integral on the right-hand side converges to $1$, and the upper integration limit is asymptotic to $\alpha \cdot \frac{s}{s+\pi}$. At this point we need to ensure that for large values of $\alpha$, this agrees asymptotically with the volume $\lvert \Omega \rvert$. For that, since $\lvert z \rvert$ is small if $\alpha$ is large enough, we have
\[ s_{\alpha} = \mu(\Omega_{\alpha}) = \int_{\Omega_{\alpha}} \frac{1}{(1-|z|^2)^2}  \, \mathrm{d}z = \sum_{k \geq 1} k \int_{\Omega_{\alpha}} \lvert z \rvert^{2(k-1)} \, \mathrm{d}z =  \sum_{k \geq 1} k \left( \frac{\pi}{\alpha} \right)^{k} \int_{\Omega} \lvert z \rvert^{2(k-1)} \, \mathrm{d}z .\]
Then,
\[ \alpha \frac{s_{\alpha}}{\pi + s_{\alpha}} = \frac{\ \pi \lvert \Omega \rvert + \sum_{k \geq 2} \frac{\pi^k}{\alpha^{k-1}} \int_{\Omega} \lvert z \rvert^{2(k-1)} \, \mathrm{d}z }{\pi+s_{\alpha}} \overset{\alpha \to \infty}{\longrightarrow} \lvert \Omega \rvert .\]
Hence
\[ \frac{\Gamma(\alpha+2)}{\alpha \Gamma(\alpha+1)}  \int_{0}^{\alpha[1-(1+s_{\alpha}/\pi)^{-1}]} \left( 1-\frac{y}{\alpha} \right)^{\alpha}  \, \mathrm{d}y \overset{\alpha \to \infty}{\longrightarrow} \int_{0}^{\lvert \Omega \rvert} e^{-y} \, \mathrm{d}y = 1-e^{ -\lvert \Omega \rvert } .\]
We are now left to check that the left-hand side converges to the same thing that appears in \cite{NicolaTilli}. It amounts to computing what happens to the function $H$ from before when we take such limits. We note, however, that the left-hand side of \eqref{eqn:ConsequenceRT} can be written as
\[ \int_{\Omega_{\alpha}} \left[ \sum_{k \geq 0} \frac{1}{c_{k}} \sum_{l=0}^{k} a_{l}\overline{a_{k-l}} z^{l}\overline{z^{k-l}} \right] (1-|z|^2)^{\alpha} \, \mathrm{d}z , \]
where we used the explicit expansion of $f$. By performing a change of variables in order to undo the scaling and by letting $\alpha \to \infty$, we reach that the limit of the left-hand side of \eqref{eqn:ConsequenceRT} equals
\[ \int_{\Omega} \left[ \sum_{k=0}^{\infty} \sqrt{ \frac{\pi^k}{k!} } \sum_{l=0}^{k} a_{l}\overline{a_{k-l}} z^l \overline{z^{k-l}} \right] e^{ -\pi \lvert z \rvert^2 } \, \mathrm{d}z = \int_{\Omega} \lvert F(z) \rvert^2 e^{ -\pi \lvert z \rvert^2 } \, \mathrm{d}z .\]
The conclusion of the main result from \cite{NicolaTilli} then follows at once.  
\end{proof}

\begin{remark} One can also consider a hyperbolic scaling in order to scale the sets in hyperbolic space in a way that preserves the measure. This amounts to considering the transformation
\begin{equation}\label{eqn:HyperbolicScaling} 
\lvert z \rvert \mapsto \frac{(1+\lvert z \rvert)^r - (1-\lvert z \rvert)^r}{(1+\lvert z \rvert)^r + (1-\lvert z \rvert)^r} .
\end{equation} 
By employing such an estimate, one would not need to work with $s_{\alpha}$, and instead of the relationship we found between it and $\lvert \Omega \rvert$ at the limit, we would have an equality. 

Since, however, this entails on a more technical argument due to the not particularly linear nature of the scaling in \eqref{eqn:HyperbolicScaling}, and taking into account moreover the fact that the first order expansion of this hyperbolic scaling agrees with the euclidean one that we took, we decided not to perform those computations here. 
\end{remark} 

\subsection{Stability results for the short-time Fourier transform}\label{sec:Stability}

We now tackle the case of stability estimates, and we wish to deduce the main result from \cite{Inv2} from Theorem \ref{thm:BergmanStability}. We note that a similar method has been used in \cite{GarciaOrtega} for the polynomial case. 

Indeed, simply note that, from what we just proved above, the deficit in the Bergman case is shown to converge to the deficit in the Fock case, provided one looks at the rescaled sets $\Omega_{\alpha}$. On the other hand, note that we have the convergence
\[\frac{\alpha+1}{\alpha+2} \left( 1+\frac{s_{\alpha}}{\pi} \right)^{\alpha} \longrightarrow e^{\lvert \Omega \rvert} \]
as $\alpha \to \infty$. Finally, the stability for the function follows from the fact that the left-hand side of \eqref{eqn:BergmanStabilityFunction} is simply the $\ell^2$-norm of $f \in \mathbf{B}_{\alpha}^2$. Since this corresponds to $F \in \mathcal{F}^2$ through the same series representation, and the same $\ell^2$-norm, we obtain that the left-hand side of \eqref{eqn:BergmanStabilityFunction} is \emph{independent of $\alpha$}, and we therefore reach the result.

Estimate \eqref{eqn:BergmanStabilitySet} also implies the corresponding result in the Fock space from \cite{Inv2} through the same limiting argument. Indeed, the right-hand side of \eqref{eqn:BergmanStabilitySet} converges to the right-hand side of its Fock space counterpart. Moreover, if $\Om$ is a ball, then $\mcalA(\Om)=\mcalA_{\disk}(\Om)=0$. Otherwise, letting $B^n$ be a minimizing sequence of balls for $\mcalA_{\disk}(\Om)$ and $B^n_{\alpha}$ be their rescalings, we have
\[ \mu(\Om_{\alpha} \Delta B^n_{\alpha}) = \mu\of{(\Om \Delta B^n)_{\alpha}} ,\]
and therefore $B^n_{\alpha}$ also minimizes $\mcalA_{\disk}(\Om_{\alpha})$. Up to a subsequence, we have
\begin{align*}
    \mcalA_{\disk}(\Om_{\alpha}) + 1/n &\geq
    s_{\alpha}^{-1} \mu(\Om_{\alpha} \Delta B^n_{\alpha}) =
    s_{\alpha}^{-1} \mu\of{(\Om \Delta B^n)_{\alpha}}\\
    &=\frac{\sum_{k \geq 1} k \of{\frac{\pi}{\alpha}}^k \int_{\Om \Delta B^n} \abs{z}^{2(k-1)} \diff z}{\sum_{k \geq 1} k \of{\frac{\pi}{\alpha}}^k \int_{\Om} \abs{z}^{2(k-1)} \diff z}
    = \frac{\vol{\Om \Delta B^n} + \sum_{k \geq 2} \frac{\pi^k}{\alpha^{k-1}} \int_{\Omega \Delta B^n} |z|^{2(k-1)} \diff z}{\vol{\Om} + \sum_{k \geq 2} \frac{\pi^k}{\alpha^{k-1}} \int_{\Omega} |z|^{2(k-1)} \diff z} .
\end{align*}
Finally, by choosing $n = m / \mcalA(\Om)$ and letting $\alpha \to \infty$, we find
\[ \limsup_{\alpha \to \infty} \mcalA_{\disk}(\Om_{\alpha}) \geq \of{\frac{m-1}{m}} \mcalA(\Om) .\]
Since $m$ was arbitrary, we conclude that 
\begin{align*}
\mathcal{A}(\Omega) \le \limsup_{\alpha \to \infty} \mathcal{A}_{\disk}(\Omega_\alpha) \le \limsup_{\alpha \to \infty} K(s,\alpha) \delta(f_{\alpha};\Om,\alpha)^{1/2} = K(s) \delta_{\Fock}(F;\Om)^{1/2},
\end{align*}
as desired. 

\section{Proof of Theorem \ref{thm:HardyStability}}\label{sec:Hardy} We proceed in a similar manner as in the preceding section, but this time we do \emph{not} employ any rescaling on the sets under consideration. Indeed, for a fixed function $f \in H^2(\disk)$ with $\|f\|_{H^2} =1$, we have that $f$ may be written as 
\begin{equation}\label{eqn:HardyExpansion} 
f(z) = \sum_{k \ge 0} a_k z^k,
\end{equation}
where $a_k \in \ell^2(\N)$, with $\|a_k\|_{\ell^2(\N)} = 1$. Conversely, any function which possesses an expansion like \eqref{eqn:HardyExpansion} for some sequence $a_k \in \ell^2(\N)$ with norm $1$ belongs automatically to $H^2(\disk)$ and has $H^2(\disk)$-norm equal to 1. We thus define $f_{\alpha}(z) \in \Berg$ in the exact same fashion as in \eqref{eqn:BergmanFunction}, and apply Theorem \ref{thm:BergmanStability} to $f_{\alpha}$. This directly implies that 
\begin{equation}\label{eqn:DeficitGenBerg} 
 \inf_{\substack{\abs{c} = \norm{f_\alpha}_{\Berg} ,\\ \om \in \disk}} \frac{\norm{f_\alpha-c\ff_{\om}}_{\mathbf{B}_{\alpha}^2}}{\norm{f_\alpha}_{\mathbf{B}_{\alpha}^2}} \leq C \of{1 + \frac{\alpha+2}{\alpha+1} \left[ \of{1+\frac{s}{\pi}}^{\alpha+1} -1\right]}^{1/2} \delta(f_\alpha;\Omega,\alpha)^{1/2}. 
\end{equation} 
We deal first with the left-hand side: we from the definition \eqref{eqn:BergmanFunction}, it follows directly that $\lim_{\alpha \to -1} \|f_\alpha \|_{\Berg} = \|f\|_{H^2(\disk)}$ (see also \cite{Zhu-Translating}). In order to deal with the numerator, note that, similarly to what we did in Lemma \ref{lemma:RKHSQuantitative}, we have
\begin{equation}\label{eqn:DifferenceBargmanExtreme} \inf_{\substack{\abs{c} = \norm{f_\alpha}_{\Berg} ,\\ \om \in \disk}} \|f_\alpha - c \ff_\om\|_{\Berg}^2 = 2\|f_\alpha\|_{\Berg}^2 - 2 \sup_{\om \in \disk} \|f_\alpha \|_{\Berg} \left| \langle f_\alpha, \ff_\om \rangle_{\Berg} \right|. 
\end{equation} 
Since the first term on the right-hand side of \eqref{eqn:DifferenceBargmanExtreme} was already shown to converge, it suffices to deal with the second. Namely, we wish to show that 
\begin{equation}\label{eqn:LimsupLimit} 
\limsup_{\alpha \to -1} \sup_{\om \in \disk} |\langle f_\alpha, \ff_\om \rangle_{\Berg}| \ge  \sup_{\om \in \disk} |\langle f, \mathbf{g}_\om \rangle_{H^2}|.
\end{equation} 
We begin by showing a slightly weaker result: for each fixed $\om \in \disk$, we have
\begin{equation}\label{eqn:ConvergenceHardyInner}
\langle f_\alpha, \ff_\om \rangle_{\Berg} \to \langle f,\mathbf{g}_\om \rangle_{H^2} \quad \text{as } \alpha \to -1.
\end{equation} 
By definition of reproducing kernels, however, we have that 
\[
\langle f_{\alpha}, \ff_\om \rangle_{\Berg} = \frac{f_\alpha(\om)}{\sqrt{\frac{\alpha+1}{\pi}} (1-|\om|^2)^{-\frac{\alpha+2}{2}}}  
 \to (1-|\om|^2)^{1/2} f(\om),\]
 where the asserted convergence follows from the series expansion. Indeed, for a fixed $\omega \in \disk$, we have 
 \[
 \sqrt{\frac{\pi}{\alpha+1}} f_\alpha(\om) = \sum_{k \ge 0} \frac{a_k}{\sqrt{(\alpha+1)c_k/\pi}} \om^k.
 \]
 We then note that 
 \[
 \frac{(\alpha+1)c_n}{\pi} = \frac{(\alpha+1)\Gamma(\alpha+1)\Gamma(n+1)}{\Gamma(2+\alpha+n)} = \frac{\Gamma(\alpha+2) \Gamma(n+1)}{\Gamma(2+\alpha+n)}. 
 \]
 For each fixed $n$, it follows directly from this last identity that $\frac{(\alpha+1)c_n}{\pi} \to 1$. In order to better quantify this, we use Gautschi's inequality for the Gamma function (see \cite[(5.6.4)]{NIST:DLMF}). Using that for $x = n+\alpha+1$ and $s = -\alpha$ where $\alpha \in (-1,0)$, we obtain that 
 \begin{equation}\label{eqn:GautschiBoundGamma} 
\Gamma(\alpha+2) (n+\alpha+2)^{-\alpha-1} \le \left| \frac{(\alpha+1)c_n}{\pi}\right| \le \Gamma(\alpha+2) \cdot (n+\alpha+1)^{-\alpha -1}. 
 \end{equation} 
 Since any $\{a_k\}_k \in \ell^2(\N)$ is a bounded sequence, we conclude from \eqref{eqn:GautschiBoundGamma} that the coefficients of $f_{\alpha}$ converge to $1$, and that they are each bounded by at most a polynomially growing function, which is uniformly bounded in $\alpha \in (-1,0)$. Since $\omega \in \disk$, we have that $\frac{a_k}{\sqrt{(\alpha+1)c_k/\pi}} \omega^k$ converges exponentially to $0$ as $k$ goes to infinity, uniformly on $\alpha\in(-1,0)$. This allows us to conclude the asserted fact that $\sqrt{\frac{\pi}{\alpha+1}} f_{\alpha}(\om) \to f(\om)$, as desired. 

 We now claim that $(1-|\om|^2)^{1/2} f(\omega) = \langle f,\mathbf{g}_\om\rangle_{H^2}$. This is equivalent to 
 \[
 f(\om) = \frac{1}{2\pi} \int_0^{2\pi} \frac{f(e^{i\theta})}{1-\om e^{-i\theta}} \, \diff \theta. 
 \]
 This, however, follows at once from several different arguments: for instance, one may expand $(1-\om e^{-i\theta})^{-1} = \sum_{k \ge 0} \left( \frac{\om}{e^{i\theta}}\right)^k$, change the order of summation and compare coefficients. Another way to do it is directly from the Cauchy integral formula applied to a fixed radius $r \in (0,1)$ sufficiently close to $1$: indeed, we have 
 \[
 f(\om) = \frac{1}{2\pi} \int_0^{2\pi} \frac{f(re^{i\theta})}{r - \om e^{-i\theta}} \, \diff \theta \quad \text{for }  r \text{ close to } 1. 
 \]
 Taking limits $r \to 1^-$ lets us arrive at the same conclusion. This is enough to conclude \eqref{eqn:ConvergenceHardyInner}. 

 We now fix $\varepsilon>0$ and $\om_0 \in \disk$ such that $\sup_{\om \in \disk} \left| \langle f,\mathbf{g}_\om \rangle_{H^2} \right| \le \left|\langle  f,\mathbf{g}_{\om_0} \rangle_{H^2}\right| + \varepsilon.$ We then use \eqref{eqn:ConvergenceHardyInner}, which allows us to deduce that
 \[
 \sup_{\om \in \disk} \left| \langle f,\mathbf{g}_\om \rangle_{H^2}\right| \le \limsup_{\alpha \to -1} \left|\langle  f_\alpha,\ff_{\om_0} \rangle_{\Berg}\right| + \varepsilon \le \limsup_{\alpha \to -1} \sup_{\om \in \disk} \left| \langle f_{\alpha},\ff_\om \rangle_{\Berg}\right| + \varepsilon. 
 \]
 Taking $\varepsilon \to 0$ yields then \eqref{eqn:LimsupLimit}. This, on the other hand, shows that 
 \begin{equation}\label{eqn:LimsupBoundHardy} 
  \inf_{\substack{\abs{c} = \norm{f}_{H^2} ,\\ \om \in \disk}} \frac{\norm{f -c\mathbf{g}_{\om}}_{H^2}}{\norm{f}_{H^2}} \le \limsup_{\alpha \to -1}  \inf_{\substack{\abs{c} = \norm{f_\alpha}_{\Berg} ,\\ \om \in \disk}} \frac{\norm{f_\alpha-c \ff_{\om}}_{\mathbf{B}_{\alpha}^2}}{\norm{f_\alpha}_{\mathbf{B}_{\alpha}^2}}. 
 \end{equation} 
 We now deal with the right-hand side of \eqref{eqn:DeficitGenBerg}. Note that the constant 
 \[
 1 + \frac{\alpha+2}{\alpha+1} \left[ \left( 1 + \frac{s}{\pi} \right)^{\alpha+1} - 1 \right] \to 1 + \log\left( 1 + \frac{s}{\pi} \right) \quad \text{ as } \alpha \to -1, 
 \]
 and hence we only need to deal with the limit of the deficit factor. We write that explicitly as 
 \begin{align}
    \delta(f_\alpha;\Omega,\alpha) & = 1 - \frac{ \int_\Om |f_\alpha(z)|^2(1-|z|^2)^{\alpha+2} \, \diff \mu(z)}{\|f_\alpha\|_{\Berg} \theta_{\alpha}(s)} \cr 
    & = 1 - \frac{ \int_\Om \left| \sqrt{\frac{\pi}{\alpha+1}} f_\alpha(z)\right|^2(1-|z|^2)^{\alpha+2} \, \diff \mu(z)}{\|f_\alpha\|_{\Berg} \frac{\pi \cdot \theta_{\alpha}(s)}{\alpha+1}}.
 \end{align}
 We now use the dominated convergence theorem: indeed, the considerations above show that 
 \[
 \left| \sqrt{\frac{\pi}{\alpha+1}} f_\alpha(z)\right|^2(1-|z|^2)^{\alpha+2} = |\langle f_\alpha , \ff_\om \rangle_{\Berg}|^2 \le 1.
 \]
 Furthermore, we also have that 
 \[
 \left| \sqrt{\frac{\pi}{\alpha+1}} f_\alpha(z)\right|^2(1-|z|^2)^{\alpha+2} \to \left| f(z) \right|^2 (1-|z|^2) \quad \text{ pointwise},
 \]
 which, since $\mu(\Omega) < +\infty$, implies that 
 \[
 \int_\Om \left| \sqrt{\frac{\pi}{\alpha+1}} f_\alpha(z)\right|^2(1-|z|^2)^{\alpha+2} \, \diff \mu(z) \to \int_\Om |f(z)|^2 (1-|z|^2) \, \diff \mu(z) \quad \text{ as } \alpha \to -1. 
 \]
 For the denominator of the expression we wish to analyze, we note that it suffices to compute $$\lim_{\alpha \to -1} \frac{\theta_{\alpha}(s)}{\alpha+1},$$
 since we have already proved convergence in norm. But this is a completely explicit computation: 
 \[
 \frac{\theta_{\alpha}(s)}{\alpha+1} = \frac{1 - \left( 1 + \frac{s}{\pi}\right)^{-\alpha-1}}{\alpha +1} \to \log\left( 1 + \frac{s}{\pi}\right) \quad \text{ as } \alpha \to -1. 
 \]
 We conclude that the right-hand side of \eqref{eqn:DeficitGenBerg} converges to 
 \begin{equation}\label{eqn:RHSHardyDeficit} 
 C \cdot \left[ 1 + \log^{1/2} \left( 1 + \frac{s}{\pi} \right) \right] \cdot \delta_{H^2}(f;\Om)^{1/2} \quad \text{ as } \alpha \to -1.
 \end{equation} 
 Putting this together with \eqref{eqn:DeficitGenBerg} and \eqref{eqn:LimsupBoundHardy}, we obtain the first part of Theorem \ref{thm:HardyStability}. 

 In order to prove the assertion about the asymmetry, we use the explicit upper bound we have on $K(s,\alpha)$ as given by \eqref{eqn:upperboundK} in Remark \ref{rmk:K-value}: according to that, we have that 
 \begin{equation}\label{eqn:LimitConstantSetStab}
 \limsup_{\alpha \to -1} K(s,\alpha) \le C \left( 
s^{-1} \cdot \left( 1 + \frac{s}{\pi}\right)^2 + \log^{1/2} \left( 1 + \frac{s}{\pi}\right) \cdot \left( 1 + \frac{s}{\pi}\right)\right)=:N(s). 
 \end{equation}
 Since we have already shown that the right-hand side of \eqref{eqn:DeficitGenBerg} converges to \eqref{eqn:RHSHardyDeficit}, and  since $\mathcal{A}_\disk(\Om)$ does not change as $\alpha \to -1$, we conclude that upon taking $\limsup_{\alpha \to -1}$ of \eqref{eqn:BergmanStabilitySet}, taking \eqref{eqn:LimitConstantSetStab} into account, we obtain \eqref{eqn:AsymmetryHardy}, as desired, concluding thus the proof of Theorem \ref{thm:HardyStability}. 

 \begin{remark} As noted in the introduction, Theorem \ref{thm:HardyStability} may be interpreted as a (sharp stability version of) a novel concentration inequality for Poisson extensions of functions in $H^2$. It is immediate from a change of variables that these results transfer immediately to the upper half plane. 

 In that regard, it is worth noting that all results in this manuscript yield sharp stability versions of concentration inequalities in \emph{general} simply connected domains. Indeed, one simply uses a biholomorphism from the unit disk to any such domain, in the same vein as we did in order to transfer from the upper half space to the unit disk in Section \ref{sec:Prelim}. 

 In particular, by applying this procedure to Theorem \ref{thm:HardyStability}, one would obtain a result on Poisson extensions to such domains. It is important to note that the weights yielded by that result are inherently connected to the biholomorphism used, and that as such the definition of Hardy spaces on the domain is also intimately related to the one for the unit disk. 

 The latter remark motivates the following question: can one obtain a concentration inequality for Poisson extensions of \emph{general} $L^2$ functions on the boundary of the unit disk? This question seems to go partly beyond the scope of techniques used here: the functions under consideration are no longer analytic functions in the unit disk, but merely harmonic functions. Moreover, the crucial property used in \cite{NicolaTilli,KNOT,KalajRamos,Kalaj1,Kalaj2024,KalajMelentijevic} that the functions under consideration are \emph{log-subharmonic} is also not valid in general, and this problem hence seems to require new ideas. We wish to address this issue in a future work.  
 \end{remark}


\printbibliography[heading=bibintoc,title=References]

 \end{document}